\documentclass{article}
\usepackage[top = 1.5cm, left = 1.5cm, right = 1.5cm]{geometry}
\usepackage{fullpage}

\usepackage{amsmath}
\usepackage[utf8]{inputenc} 
\usepackage[T1]{fontenc}    
\usepackage{hyperref}       
\usepackage{url}            
\usepackage{booktabs}       
\usepackage{amsfonts}       
\usepackage{nicefrac}       
\usepackage{microtype}      
\usepackage{natbib}
\setcitestyle{square,numbers,sort&compress}
\usepackage{graphicx}
\usepackage{amssymb,color,amsthm}
\usepackage{subfigure}
\usepackage{bbm}
\usepackage{etoolbox}
\usepackage[dvipsnames]{xcolor}
\usepackage{algorithm}
\usepackage{algorithmicx}
\usepackage{algpseudocode}
\usepackage{mathrsfs}
\usepackage{flushend}
\usepackage{wrapfig}
\usepackage{lipsum}
\def\remark{\addtocounter{remark}{1}\def\@currentlabel{\theremark}%
\emph{Remark~\theremark}. } \makeatother
\newcounter{remark}

\def\b1{\mathbf 1}

\def\be{\mathbf e}
\def\bb{\mathbf b}
\def\bd{\mathbf d}
\newcommand{\st}{{\rm s.t.}}
\def\bx{\mathbf x}

\def\bq{\mathbf q}
\def\bw{\mathbf w}

\def\by{\mathbf y}
\def\bu{\mathbf u}

\def\bv{\mathbf v}

\def\bz{\mathbf z}

\def\bA{\mathbf A}

\def\bH{\mathbf H}
\def\bI{\mathbf I}

\def\bM{\mathbf M}

\def\bP{\mathbf P}

\def\bW{\mathbf W}
\def\bX{\mathbf X}
\def\bY{\mathbf Y}
\def\cA{\mathcal A}
\def\bh{\mathbf h}

\def\bmu{\boldsymbol \mu}

\newcommand{\T}{\scriptscriptstyle T}

\def\minimize{\mathop{\text{minimize}}}

\def\sS{\mathscr{S}}

\def\sF{\mathscr{F}}
\def\sR{\mathscr{R}}
\def\sT{\mathscr{T}}
\def\hf{\widehat{f}}
\def\hq{\widehat{q}}

\def\leref#1{Lemma~\ref{#1}}
\def\prref#1{Proposition~\ref{#1}}
\def\thref#1{Theorem~\ref{#1}}
\def\coref#1{Corollary~\ref{#1}}
\def\figref#1{Figure~\ref{#1}}

\def\algref#1{Algorithm~\ref{#1}}
\def\asref#1{Assumption~\ref{#1}}
\def\conref#1{Condition~\ref{#1}}
\def\bydef{\triangleq}
\newtheorem{proposition}{Proposition}
\newtheorem{lemma}{Lemma}
\newtheorem{theorem}{Theorem}
\newtheorem{corollary}{Corollary}
\newtheorem{definition}{Definition}
\newtheorem{assumption}{Assumption}
\newtheorem{condition}{Condition}
\newcommand{\overbar}[1]{%
  \mkern 1.5mu\overline{\mkern-1.5mu#1\mkern-1.5mu}\mkern 1.5mu}

\usepackage[suppress]{color-edits}
\addauthor{mh}{blue}
\addauthor{sl}{Orange}

\usepackage{lipsum}

\newcommand\blfootnote[1]{%
  \begingroup
  \renewcommand\thefootnote{}\footnote{#1}%
  \addtocounter{footnote}{-1}%
  \endgroup
}

\title{SNAP: Finding Approximate Second-Order Stationary Solutions Efficiently for Non-convex Linearly Constrained Problems 
}


\author{
\and  \\ \and 
Songtao Lu \thanks{Department of Electrical and Computer Engineering, University of Minnesota -- Twin Cities}\;
\\
\texttt{\small lus@umn.edu}
\and
Meisam~Razaviyayn \thanks{Department of Industrial \& Systems Engineering, University of Southern California}\;
\\
\texttt{\small razaviya@usc.edu}
\and
Bo Yang \footnotemark[1]
\\
\texttt{\small yang4173@umn.edu}
 \and \\ \and 
Kejun~Huang \thanks{Department of Computer \& Information Science \& Engineering, University of Florida}\;
\\
\texttt{\small kejun.huang@ufl.edu}
\and
Mingyi Hong \footnotemark[1]
\\
\texttt{\small mhong@umn.edu}
\blfootnote{S. Lu and M. Hong are supported in part by a NSF grant CMMI-1727757, and an AFOSR grant 15RT0767.}
}

\begin{document}
\maketitle
\begin{abstract}
This paper proposes low-complexity algorithms for finding approximate second-order stationary points (SOSPs) of problems with  smooth non-convex objective and linear constraints. While finding (approximate) SOSPs is computationally intractable, we first show that  \textit{generic} instances of the problem can be solved efficiently. More specifically, for a {\it generic} problem instance, certain  {\it strict complementarity} (SC) condition holds for all  Karush–Kuhn–Tucker (KKT) solutions (with probability one). The SC  condition is then used to establish an equivalence relationship between two different notions of SOSPs, one of which is computationally easy to verify. Based on this particular notion of SOSP, we design an algorithm named the Successive Negative-curvature grAdient Projection (SNAP), which successively performs either conventional gradient projection or some negative curvature based projection steps to find SOSPs. SNAP and its first-order extension SNAP$^+$, require $\mathcal{O}(1/\epsilon^{2.5})$ iterations to compute an $(\epsilon, \sqrt{\epsilon})$-SOSP, and their per-iteration  computational complexities are  {\it polynomial} in the number of constraints and problem dimension. To our knowledge, this is the first time that first-order algorithms with polynomial per-iteration complexity and global sublinear rate have been designed to find SOSPs of the important class of non-convex problems with linear constraints.
\end{abstract}

\section{Introduction}
\label{sec:intro}
We consider the following class of non-convex linearly constrained optimization problems
\begin{equation}\label{eq.pro}
\minimize_{\bx}\quad f(\bx), \quad \bx\in \mathcal{X}\bydef\{\mathbf{x}\mid \mathbf{A} \bx\le \mathbf{b}\}
\end{equation}
where  $f:\mathbb{R}^{d}\rightarrow\mathbb{R}$ is twice differentiable (possibly non-convex); $\bA\in\mathbb{R}^{m\times d}$, and $\bb\in\mathbb{R}^{m}$ are some given matrix and vector. Such a class of problems finds many applications in machine learning and data science. For example, in the  nonnegative matrix factorization (NMF) problem,
a given data matrix $\bM\in\mathbb{R}^{n\times m}$ is to be factorized into two nonnegative matrices $\bW\in\mathbb{R}^{n\times k}$ and $\bH\in\mathbb{R}^{m\times k}$ such that $\|\bW\bH^{\T}-\bM\|^2_F$ is minimized \cite{lee1999learning}. It is also of interest to consider the symmetric case $\min_{\bX\ge 0}\|\bW\bW^{\T}-\bM\|^2_F$ where $\bM\in\mathbb{R}^{n\times n}$. Further, for non-convex problems with $\ell_1$ regularizers (such as sparse PCA),
we need to solve $\min\; g(\bx) +\|\bx\|_1$, which can be equivalently written as
\begin{align}
    \min\; g(\bx) +\mathbf{1}^{\T}\by,\quad  \mbox{s.t.}\; -\by \le \bx\le \by,
\end{align}
which is of the form in \eqref{eq.pro}.

Recently, algorithms that escape strict saddle points (stationary points that have negative curvatures) for unconstrained non-convex problems attracted significant research efforts. This is partly due to the recent discoveries that, for certain non-convex unconstrained problems, their optimization landscape is nice, in the sense that the stationary points are either global minimum or strict saddle points (e.g., shallow neural network training \cite{kawaguchi2016deep,soja18}),  or all saddle points are \emph{strict} saddle points (e.g., tensor decomposition \cite{ge2015escaping}, phase retrieval \cite{jusun17}, low-rank matrix factorization \cite{rong17}, etc.). Therefore, escaping (strict) saddle points guarantees convergence to either local or even global optima.

A natural question then arises: what if the considered problem has some simple constraints or non-smooth regularizers? After all, there are many machine learning and data sciences problems of this type. It would seem to be straightforward to extend the previous ``saddle-point escaping'' algorithms to these setting, just like we can extend  algorithms that can reach unconstrained first-order stationary solutions to constrained problems. Unfortunately, this is not the case. As will be seen shortly, even {\it checking} the second-order stationary solution for linearly constrained problems could be daunting. The main task of this paper is then to identify situations in which finding second-order stationary solution for problem \eqref{eq.pro} is easy, and design efficient algorithms for this task.

\noindent{\bf Related work}. For unconstrained smooth problems, there has been a line of work that develops the algorithm by using both the gradient direction and negative curvature  so that second-order stationary points (SOSPs) can be attained within a certain number of iterations \cite{royer2018complexity,carmon2018accelerated,agarwal2017finding}. For example, a Hessian-free algorithm \cite{carmon2018accelerated} is guaranteed to provably converge to SOSPs within a certain number of iterations. By exploiting the eigen-decomposition, the convergence rate of some variant of the Newton method to SOSPs has been shown in \cite{paternain2019newton,Royer2019}, where the algorithm is assumed to be able to access the Hessian matrix around strict saddle points \cite{jlee16jordan}. Another way of finding negative curvature is to occasionally add noise to the iterates. A perturbed version of gradient descent (GD) was firstly proposed in \cite{jin2017jordan}, which shows that the convergence rate of perturbed GD to SOSPs is provably faster than the ordinary gradient descent algorithm with random initializations. In a follow-up work of the perturbation technique \cite{xu2017first}, the authors proposed NEgative-curvature-Originated-from-Noise (NEON), and illustrated that the perturbed gradient descent is essentially implementing the power method around the saddle point so that a negative curvature of the Hessian matrix is extracted without performing the eigenvalue decomposition. Other recent works include generalizations of NEON such as NEON$^+$ ~\cite{xu2017first} 
NEON2  \cite{allen2018neon2} 
and  perturbed alternating gradient descent proposed in \cite{lu2018sublinear} for block structured non-convex problems. In practice, there may be some constraints, such as equality and inequality constraints. For the equality constraint, negative curvature method has been proposed \cite{Goldfarb2017,hong172ndorder} so that SOSP can be obtained asymptotically as the algorithm proceeds.

Despite these recent exciting developments, the aforementioned methods are unable to incorporate even the simpliest linear {\it inequality} constraints. In practical machine learning problems, however, inequality constraints are ubiquitous due to physical constraints. Examples include neural networks training with the nonnegative constraint \cite{chzu15}, NMF \cite{lee1999learning}, nonnegative tensor factorization \cite{sidiropoulos2017tensor}, \mhdelete{resource allocation in wireless networks \cite{luwa18}, image restoration, } non-convex quadratic programming with box constraints (QPB) \cite{burer2009nonconvex}, to name just a few. Existing work either directly rely on~\emph{second-order} descent directions of the objective function  \cite{andreani2010second,royer2018complexity}, or use this information together with other methods  such as the trust region method \cite{conn2000trust}, cubic regularized Newton's method \cite{cartis2018second,}, primal-dual algorithm \cite{di2005convergence}, etc. These algorithms are generally unfavorable for large-scale problems due to the scalability issues when computing the second-order information. However, to the best of our knowledge, there has been no first-order methods that can provably compute SOSPs for linearly constrained problem \eqref{eq.pro}.

An even more challenging issue is that finding SOSP for general linearly constrained non-convex problems is NP-hard. Indeed, it has been shown in \cite{mei18} that even obtaining the approximate SOSPs is hard in terms of both the total number of constraints and the inverse of the desired second-order optimality error. So existing methods for finding SOSPs with global convergence rate all require some exponential complexity (exponential in the total number of constraints); see \cite{mei18,aras18}.

\noindent{\bf Contributions of this work}. We first  introduce two notions of (approximate) SOSPs for problem \eqref{eq.pro}, one based on identifying the active constraints at a given solution (referred to as SOSP1), and one based on the feasible directions orthogonal to the gradient (referred to as SOSP2). In particular, we show that, these two conditions become {\it equivalent} when certain (provably mild) strict complementarity (SC) conditions are satisfied. Such equivalence conditions enable us to design
an algorithm by exploiting the active sets of the solution path, which is computationally much simpler compared with existing methods based on checking feasible directions orthogonal to gradient. Then we propose a Successive Negative-curvature grAdient Projection (SNAP) algorithm, which can find second-order solutions with high probability. The algorithm updates the iterates by successively using either gradient projection step, or certain negative-curvature projection step (with appropriate active constraints based line-search procedures). Further, we extend SNAP by proposing a first-order algorithm (abbreviated as SNAP$^+$) which utilizes gradient steps to extract negative curvatures.
Numerical simulations demonstrate that the proposed algorithm efficiently converges to SOSPs.

The main contributions of this work are summarized as follows:

1) We study problem \eqref{eq.pro} and analyze the equivalence of two different notions of (approximate) SOSPs under the assumption of strict complementarity. This part of work provides new insights of solution structures, and will be useful in subsequent algorithm design.

2)  We propose the SNAP algorithm, which computes some approximate $(\epsilon_G,\epsilon_H)$-SOSP with  $\mathcal{O}(\max\{1/\epsilon^2_G,1/\epsilon^3_H\})$ iterations, and with polynomial computational complexity in dimensions $(d,m)$ as long as projection, gradient, and Hessian can be computed efficiently.

3) We extend SNAP to SNAP$^+$, an  algorithm that only uses the gradient of the objective function, while being able to compute $(\epsilon,\sqrt{\epsilon})$-SOSPs for problem \eqref{eq.pro} within  $\mathcal{O}(1/\epsilon^{2.5})$ iterations. Each iteration of the proposed algorithm only requires simple vector operations and  projections to the feasible set which can be done in polynomial iterations under reasonable oracles~\cite{nemirovski1995information}. This makes the proposed algorithm amenable for large scale optimization problems. To the best of our knowledge, this is the first first-order method that is capable of achieving the above stated properties.

\noindent{\bf Notation.} Bold lower case characters, e.g., $\bx$ represents vectors and bold capital ones, e.g., $\bX$ denotes a matrix, $\bx_i$ or $(\bA\bx)_i$ denotes the $i$th entry of vector $\bx$ or $\bA\bx$ where $\bA\in\mathbb{R}^{m\times d}$ and $\bx\in\mathbb{R}^d$. $\bA^{\dag}$ denotes the pseudo inverse of matrix $\bA$, and $\|\bA\|$ denotes the spectral norm of $\bA$.
\section{Preliminaries}

\mhdelete{\begin{definition}
A differentiable function $f(\cdot)$ is $L_1$-gradient Lipschitz
if
$
\|\nabla f(\bx)-\nabla f(\by)\|\le L_1\|\bx-\by\|,\quad\forall \bx,\by\in\mathcal{X}
$
where $\mathcal{X}$ denotes the feasible set.
\end{definition}
\begin{definition}
 A twice-differentiable function $f(\cdot)$ is $L_2$-Hessian Lipschitz  if
$
\|\nabla^2f(\bx)-\nabla^2f(\by)\|\le L_2\|\bx-\by\|,\quad\forall \bx,\by\in\mathcal{X}.
$
\end{definition}
}

We make the following assumption on the objective function of ~\eqref{eq.pro}. 
\begin{assumption}\label{as1}
$f(\bx)$ in \eqref{eq.pro} is $L_1$-gradient Lipschitz and $L_2$-Hessian Lipschitz, i.e.,
\begin{align}
    \|\nabla f(\bx)-\nabla f(\by)\|\le L_1\|\bx-\by\|,\; \|\nabla^2f(\bx)-\nabla^2f(\by)\|\le L_2\|\bx-\by\|,\quad\forall \bx,\by\in\mathcal{X}.
\end{align}
\end{assumption}
Let $\mathcal{A}(\bx)=\{j \mid \bA_j\bx=\bb_j, \forall j\in [m]\}$ denote the active set at a given point $\bx$, where $\bA_j$ denotes the $j$th row of matrix $\bA$ and $\bb_j$ denotes the $j$th entry of $\bb$. Define $\overbar{\mathcal{A}(\bx)}$ to be the complement of the  set $\mathcal{A}(\bx)$, i.e.,
\begin{equation}
\overbar{\mathcal{A}(\bx)} \cup \mathcal{A}(\bx) = [m], \;\quad \overbar{\mathcal{A}(\bx)} \cap \mathcal{A}(\bx) = \emptyset.\label{eq.defofm}
\end{equation}
Concatenating the coefficients of the {\it active}  constraints at $\bx$, we define the matrix $\bA'(\bx)$ as
\begin{align}\label{eq:A:prime}
\bA'(\bx)\triangleq \left[\begin{array}{lll} \ldots & \bA_j & \ldots\end{array}\right]^{\T}\in\mathbb{R}^{|\cA(\bx)|\times d},\quad \forall j\in\mathcal{A}(\bx).
\end{align}
In other words, $\bA'(\bx)$ is a submatrix of $\bA$ containing the rows of $\bA$ corresponding to the active set. Similarly, we can define $\bb'(\bx)\in\mathbb{R}^{|\cA(\bx)|\times 1}$ by concatenating the entries of $\bb$ corresponding to the active set of constraints.
At a given point $\mathbf{x}$, we define the projection onto the space spanned by the inactive constraints as
\begin{equation}\label{eq:P}
\pi_{\mathcal{A}}(\bx)\bydef\bP(\bx)\bx,\quad \textrm{where}\quad \bP(\bx)\bydef\left(\bI-(\bA'(\bx))^{\T}\left(\bA'(\bx)(\bA'(\bx))^{\T}\right)^{\dag}\bA'(\bx)\right)\in\mathbb{R}^{d\times d}.
\end{equation}
Here, $\bP(\bx)$ represents the projection matrix to the null space of $\bA'(\bx)$.  Define $\bP_{\perp}(\bx)$ as the projector to the column space of $\bA'(\bx)$.
Similarly, let us define
\vspace{-4px}
\begin{align}\label{eq.compq}
q_{\pi}(\bx)\bydef \pi_{\mathcal{A}}(\nabla f(\bx)) = \bP(\bx)\nabla f(\bx).
\end{align}

To measure the first-order optimality of a given  point, we first define the  {\it proximal gradient}
\begin{equation}\label{eq:projection}
g_\pi(\bx)\bydef1/\alpha(\pi_{\mathcal{X}}\left(\bx-\alpha\nabla f(\bx)\right)-\bx),\;\; \mbox{with}\;\;\pi_{\mathcal{X}}(\bv) \triangleq \arg\min_{\bw\in \mathcal{X}}\|\bw -\bv\|^2,
\end{equation}
where $\alpha>0$ is a fixed given constant and $\pi_{\mathcal{X}}$ denotes the projection operator onto the feasible set. Then $\| g_\pi(\bx)\|$ can be used to define the {\it first-order optimality gap} for a given point $\bx\in\mathcal{X}$ \mhdelete{the size of the proximal gradient, expressed as}.

To define the second-order optimality gap, let us start by stating the popular  {\it exact} second-order necessary conditions for local minimum points of constrained optimization  \cite[Proposition 3.3.1]{bertsekas99}.
\begin{proposition}
\cite[Proposition 3.3.1]{bertsekas99} If $\bx^*\in\mathcal{X}$ is a local minimum  of \eqref{eq.pro}, then
\begin{equation}
\|g_{\pi}(\bx^*)\| =0, \quad
\by^{\T}\nabla^2 f(\bx^*)\by\ge 0 ,\quad \forall~ \by~\mbox{satisfying}~~\bA'(\bx^*)\by=0,
\end{equation}
where $\bA'(\bx^*)$, as defined in \eqref{eq:A:prime}, is a matrix collecting all active constraints.
\end{proposition}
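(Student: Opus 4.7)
The plan is to prove the two conditions separately, both via standard feasible-direction arguments that exploit the polyhedral structure of $\mathcal{X}$. The first-order part, $\|g_\pi(\bx^*)\|=0$, is essentially the variational inequality characterization of optimality on a convex set. I would start by noting that since $\mathcal{X}$ is convex and closed, for any $\bv \in \mathcal{X}$ the segment $\bx^* + t(\bv - \bx^*)$ lies in $\mathcal{X}$ for $t \in [0,1]$, so a local-minimum argument combined with a first-order Taylor expansion gives $\langle \nabla f(\bx^*), \bv - \bx^* \rangle \ge 0$ for all $\bv \in \mathcal{X}$. This variational inequality is precisely the fixed-point characterization $\pi_{\mathcal X}(\bx^* - \alpha \nabla f(\bx^*)) = \bx^*$ of the projection, which by the definition \eqref{eq:projection} is equivalent to $\|g_\pi(\bx^*)\| = 0$.

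For the second-order part, I would construct explicit feasible perturbations along any direction $\by$ with $\bA'(\bx^*)\by = 0$. The key observation is that for such $\by$, both $\bx^* \pm t\by$ remain feasible for all sufficiently small $t > 0$: the active rows satisfy $\bA_j(\bx^* \pm t\by) = \bb_j \pm t\bA_j\by = \bb_j$ since $\bA'(\bx^*)\by=0$, while for inactive rows $j \in \overbar{\mathcal A(\bx^*)}$ the strict slack $\bA_j \bx^* < \bb_j$ absorbs any small perturbation. Applying the first-order necessary condition just established to both $\pm \by$ yields $\langle \nabla f(\bx^*), \by\rangle = 0$.

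With that, I would invoke Assumption~\ref{as1} (Hessian Lipschitz, hence $C^2$) to write the second-order Taylor expansion
\begin{equation*}
f(\bx^* + t\by) = f(\bx^*) + t\,\langle \nabla f(\bx^*), \by\rangle + \tfrac{t^2}{2}\,\by^{\T}\nabla^2 f(\bx^*)\by + o(t^2).
\end{equation*}
The vanishing of the first-order term combined with the local-minimum inequality $f(\bx^* + t\by) \ge f(\bx^*)$ for small $t$ yields $\tfrac{t^2}{2}\by^{\T}\nabla^2 f(\bx^*)\by + o(t^2) \ge 0$. Dividing through by $t^2$ and letting $t \downarrow 0$ gives the desired $\by^{\T}\nabla^2 f(\bx^*)\by \ge 0$.

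I do not expect any serious obstacle; the only subtle point is verifying that the feasible-direction perturbation $\bx^* \pm t\by$ genuinely stays in $\mathcal{X}$ for all small $t$. This requires handling active and inactive constraints separately (using $\bA'(\bx^*)\by = 0$ on the former and strict slack on the latter), and choosing $t$ smaller than $\min_{j \in \overbar{\mathcal{A}(\bx^*)}}(\bb_j - \bA_j\bx^*)/|\bA_j \by|$ (interpreting $0/0$ trivially). Once this is done, the proof reduces to two Taylor-expansion arguments, so no novel ingredient beyond standard KKT/second-order necessary condition reasoning is needed.
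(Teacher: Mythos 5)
Your proof is correct: establishing the variational inequality $\langle \nabla f(\bx^*), \bv-\bx^*\rangle \ge 0$ and identifying it with the fixed-point characterization of the projection gives $\|g_\pi(\bx^*)\|=0$, and the two-sided feasible perturbations $\bx^*\pm t\by$ along directions with $\bA'(\bx^*)\by=0$ (using the strict slack of inactive constraints), combined with a second-order Taylor expansion, yield the Hessian condition — this is exactly the standard textbook argument. The paper itself offers no proof of this proposition, citing Bertsekas' Proposition 3.3.1 instead, and your derivation is precisely the classical argument underlying that citation, so there is no substantive difference to reconcile.
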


This proposition leads to the following form of {\it exact} SOSP.

\begin{definition} [Exact SOSP1]
The point $\bx^*\in \mathcal{X}$ is a second-order stationary point of~\eqref{eq.pro} if
\begin{subequations}\label{eq.cond1:exact}
\begin{align}
&\|g_{\pi}(\bx^*)\| =0,  & \quad\textrm{(first-order condition)}\label{eq.cond11:exact}
\\
&\by^{\T}\nabla^2 f(\bx^*)\by\ge 0 ,\quad \forall~ \by~\mbox{satisfying}~~\bA'(\bx^*)\by=0, & \quad\quad \textrm{(second-order condition)}\quad \label{eq.cond12:exact}
\end{align}
\end{subequations}
where $\bA'(\bx^*)$, defined  in \eqref{eq:A:prime},  is a matrix that collects the active constraints.
\end{definition}
Similarly, we define the following  {\it approximate} SOSP condition for problem~\eqref{eq.pro} as:
\begin{definition}[$(\epsilon_G,\epsilon_H)$-SOSP1]
A point $\bx^*\in\mathcal{X}$ is an $(\epsilon_G,\epsilon_H)$-SOSP point of problem \eqref{eq.pro} if
\begin{subequations}\label{eq.cond1}
\begin{align}
&\|g_{\pi}(\bx^*)\|\le\epsilon_G, & \textrm{(approx. first-order condition)}\label{eq.cond11}
\\
&\by^{\T}\nabla^2 f(\bx^*)\by\ge-\epsilon_H,\quad \forall~ \by~\mbox{satisfying}~~\bA'(\bx^*)\by=0, &  \textrm{(approx. second-order condition)}\label{eq.cond12}
\end{align}
\end{subequations}
where $\epsilon_G,\epsilon_H>0$ are some given small constants.
\end{definition}
By utilizing the definition of the null space of the active set in \eqref{eq:P},  we can rewrite condition \eqref{eq.cond12} as
\begin{equation}\label{eq:Hessian:small}
\lambda_{\min}(\bH_{\bP}(\bx^*))\ge -\epsilon_H, \quad \mbox{with}\quad \bH_{\bP}(\bx^*):={\bP(\bx^*)\nabla^2 f(\bx^*)\bP(\bx^*)},
\end{equation}
where $\lambda_{\min}(\cdot)$ is the operator that returns the smallest eigenvalue of a matrix. The above definition of second-order solutions leads to the following definition of first-order stationary solutions.
\begin{definition}[$\epsilon_G$-FOSP1]
If a point $\bx^*\in\mathcal{X}$  satisfies the condition $\|g_{\pi}(\bx^*)\|\le\epsilon_G$, we call it an $\epsilon_G$-first-order stationary point  \textit{of the first kind}, abbreviated as $\epsilon_G$-FOSP1.
\vspace{-0.2cm}
\end{definition}
Note that the conditions in \eqref{eq.cond1:exact} and  \eqref{eq.cond1} are necessary conditions for $\bx^*$ being a local minimum solutions. There are, of course, many other necessary conditions of this kind. Therefore, to distinguish from various solution concepts, we will refer to the solutions satisfying the above conditions as SOSP \textit{of the first kind} and $(\epsilon_G,\epsilon_H)$-SOSP \textit{of the first kind}, abbreviated as SOSP1 and $(\epsilon_G,\epsilon_H)$-SOSP1, respectively. Below we present another popular second-order solution concept, which appears in optimization literature; see \cite{aras18,mei18}, and the references therein.
\begin{definition}[$({\epsilon}_G,{\epsilon}_H)$-SOSP2]
A point $\bx^*\in \mathcal{X}$ is an $({\epsilon}_G,{\epsilon}_H)$-second-order stationary point of the second kind of problem \eqref{eq.pro} if the following conditions are satisfied:
\begin{subequations}\label{eq.cond2}
\begin{align}
&\nabla f(\bx^*)^{\T}(\bx-\bx^*)\ge-{\epsilon}_G,\quad\forall \bx\in\mathcal{X},\quad\st\quad\|\bx-\bx^*\|\le1\label{eq.cond21}
\\
&(\bx-\bx^*)^{\T}\nabla^2 f(\bx^*)(\bx-\bx^*)\ge-{\epsilon}_H,\quad \forall \bx\in\mathcal{X} \quad\st \quad\nabla f(\bx^*)^{\T}(\bx-\bx^*)=0.\label{eq.cond22}
\end{align}
\end{subequations}
We refer to the above conditions as  $({\epsilon}_G,{\epsilon}_H)$-second order stationary solution of the second kind, abbreviated as $({\epsilon}_G,{\epsilon}_H)$-SOSP2.
\end{definition}

\begin{definition}[$\epsilon_G$-FOSP2]
{If a solution $\bx^*\in\mathcal{X}$ only satisfies \eqref{eq.cond21}, we call it an $\epsilon_G$-first-order stationary point {of the second kind (FOSP2)}.}
\end{definition}

The classical result~\cite{murty1987some} shows that checking $(\epsilon_G,\epsilon_H)$-SOSP2 for \eqref{eq.pro} is NP-hard in the problem dimension and in $\log(1/\epsilon_H)$ even for the class of quadratic functions. This hardness result has recently been strengthened by showing NP-hardness in terms of problem dimension and in $1/\epsilon_H$  \cite{mei18}. On the other hand, checking $(\epsilon_G,\epsilon_H)$-SOSP1 condition only requires projection onto linear subspaces and finding minimum eigenvalues. A natural question then arises:
How do these different kinds of {\it approximate} and {\it exact} second-order solution concepts relate to each other? In what follows, we provide a concrete answer to this  question. This answer relies on a critical concept called {\it strict complementarity}, which will be introduced below.

\begin{definition}
A give primal-dual pair $({\bx^*},{\bmu^*})$ for the linearly constrained problem \eqref{eq.pro} satisfies the Karush–Kuhn–Tucker (KKT) condition with strict complementarity  if
\begin{subequations}\label{eq:complementarity}
\begin{align}
&\nabla f({\bx^*})+\sum^m_{j=1}\bmu^*_j\bA_j=0, \label{eq.condk1}
\\
& \mbox{for each $j\in[m]$, either}~\bmu^*_j>0,\;\bA_j{\bx^*}=\bb_j \quad \textrm{or}\quad \bmu^*_j=0,\; \bA_j\bx^*<\bb_j\label{eq.condk2}.
\end{align}
\end{subequations}
\end{definition}

{Note that the SC condition has been assumed and used in convergence analysis of many algorithms, e.g., trust region algorithms for non-convex optimization with bound constraints  in \cite{conn89,lin1999newton,lescrenier1991convergence}.}

 The results below extend a recent result in \cite[Proposition 2.3]{jiaw19}, which shows that SC is satisfied for box constrained non-convex problems (with high probability). See Appendix \ref{sec:slack1} --  \ref{sec:slack2} for proof. 

\begin{proposition}\label{pr:slack1}
Suppose $f(\bx)=g(\bx)+\bq^{\T}\bx$ in problem~\eqref{eq.pro} where $g(\bx)$ is differentiable. Let $\bx^*$ be a KKT point of problem \eqref{eq.pro}.
If vector $\bq$ is generated from a continuous measure, and if the set $\{\bA_j\mid j\in\mathcal{A}(\bx^*)\}$ are linearly independent, then the SC condition holds with probability one.
\end{proposition}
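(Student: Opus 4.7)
The plan is to show that the set of $\bq \in \mathbb{R}^d$ for which some KKT point of \eqref{eq.pro} violates strict complementarity has Lebesgue measure zero; since $\bq$ is drawn from a continuous (i.e., absolutely continuous) measure, SC will then hold with probability one. First, I observe that complementary slackness, which is part of the KKT conditions, already forces $\mu^*_j = 0$ whenever $\bA_j \bx^* < \bb_j$, so the only way SC can fail is that there exists an \emph{active} index $j_0 \in \mathcal{A}(\bx^*)$ with $\mu^*_{j_0} = 0$. Under the hypothesis that $\{\bA_j : j \in \mathcal{A}(\bx^*)\}$ is linearly independent, the active multipliers are uniquely determined from the stationarity equation $\nabla g(\bx^*) + \bq + \sum_{j \in \mathcal{A}(\bx^*)} \mu^*_j \bA_j = 0$, so this "vanishing multiplier" event is well-defined.

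Next I enumerate over the finitely many pairs $(S, j_0)$ with $S \subseteq [m]$, $\{\bA_j : j \in S\}$ linearly independent, and $j_0 \in S$. For each such pair define
\[
E_{S,j_0} := \Big\{\bq \in \mathbb{R}^d \;:\; \exists\, (\bx, \bmu) \in \mathbb{R}^d \times \mathbb{R}^{|S|} \text{ with } \bA_S \bx = \bb_S,\; \mu_{j_0} = 0,\; \nabla g(\bx) + \bq + \bA_S^{\T}\bmu = 0 \Big\}.
\]
If SC fails at some KKT point $\bx^*$, then taking $S = \mathcal{A}(\bx^*)$ and $j_0$ any active index with $\mu^*_{j_0} = 0$ puts $\bq$ into $E_{S,j_0}$. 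Hence the "bad" event is contained in the finite union $\bigcup_{(S,j_0)} E_{S,j_0}$.

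The core step is to show that each $E_{S,j_0}$ has Lebesgue measure zero. By construction, $E_{S,j_0}$ is contained in the image of the affine set $M_{S,j_0} := \{(\bx, \bmu) : \bA_S \bx = \bb_S,\; \mu_{j_0} = 0\}$ under the map $\Phi(\bx, \bmu) := -\nabla g(\bx) - \bA_S^{\T} \bmu$, which maps into $\mathbb{R}^d$. Because $\bA_S$ has full row rank $|S|$, $M_{S,j_0}$ is an affine subspace of dimension $(d - |S|) + (|S| - 1) = d - 1$. The map $\Phi$ is locally Lipschitz: linearity handles the $\bmu$ part, while Assumption~\ref{as1} gives the $L_1$-Lipschitzness of $\nabla g = \nabla f - \bq$ on $\mathcal{X}$ (and on any compact set otherwise). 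A standard measure-theoretic fact then implies that a locally Lipschitz image of a $(d-1)$-dimensional set in $\mathbb{R}^d$ has Lebesgue measure zero, so $|E_{S,j_0}| = 0$. Taking the finite union and using absolute continuity of the law of $\bq$ gives the desired probability-one statement.

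The part I expect to require the most care is Step~3, specifically pinning down the dimensional-count argument and the measure-zero conclusion for locally Lipschitz images. Once linear independence of $\bA_S$ is used to fix the dimension of $M_{S,j_0}$ as exactly $d-1$, the rest is a standard application of either Sard's theorem (if one wants to use $C^1$ smoothness, upgrading differentiability via Assumption~\ref{as1}) or the Lipschitz-image result; the bookkeeping over finitely many subsets $S$ and indices $j_0$ is then routine.
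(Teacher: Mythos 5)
Your proposal follows essentially the same route as the paper's own proof: the same map $\Phi(\bx,\bmu)=-\nabla g(\bx)-\bA_S^{\T}\bmu$, the same dimension count $(d-|S|)+(|S|-1)=d-1$ using linear independence of the active rows, the measure-zero property of Lipschitz images of $(d-1)$-dimensional sets, and the continuity of the law of $\bq$; your explicit union over the finitely many pairs $(S,j_0)$ is just a more careful packaging of the paper's contradiction argument at a fixed configuration. One small point: since $g$ is only assumed differentiable, you should define $E_{S,j_0}$ with $\bx\in\mathcal{X}$ (as KKT points are feasible, and as the paper does with its set $\mathcal{T}$) so that the Lipschitzness of $\nabla g=\nabla f-\bq$ from Assumption~\ref{as1} applies, rather than invoking Lipschitzness on arbitrary compact sets.
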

\begin{corollary}\label{co:slack2}
Suppose $f(\bx)=g(\bx)+\bq^{\T}\bx$ and $g(\bx)$ is differentiable. If the data vector $(\bq,\bb)$ is generated from a continuous measure, then the SC  condition holds for \eqref{eq.pro} with probability one.
\end{corollary}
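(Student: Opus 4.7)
Proposition~\ref{pr:slack1} already yields SC with probability one over $\bq$ \emph{provided} the rows $\{\bA_j:j\in\mathcal{A}(\bx^*)\}$ are linearly independent at every KKT point. So the plan is to exploit the freshly introduced randomness in $\bb$ to guarantee this linear-independence condition (LICQ) almost surely, and then to combine the two statements via Fubini to upgrade the conclusion to one over the joint measure on $(\bq,\bb)$.

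First I would verify that LICQ holds almost surely over $\bb$. Fix any subset $S\subseteq[m]$ whose associated rows $\{\bA_j\}_{j\in S}$ are \emph{linearly dependent}; there are at most $2^m$ such subsets, and every $S$ with $|S|>d$ automatically belongs to this family. If any feasible $\bx$ has $\mathcal{A}(\bx)\supseteq S$, then the equations $\bA_S\bx=\bb_S$ force $\bb_S$ to lie in the range of $\bA_S$, which is a proper subspace of $\mathbb{R}^{|S|}$ because $\mathrm{rank}(\bA_S)<|S|$, and hence is Lebesgue-null. Since $(\bq,\bb)$ has a joint density, the marginal of $\bb_S$ is absolutely continuous, so the above event has probability zero. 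A union bound over the finitely many dependent subsets $S$ then shows that, almost surely over $\bb$, at every feasible point the active rows are linearly independent; in particular this holds at every KKT point.

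To finish I would combine the two steps. Let $E\subseteq\mathbb{R}^m$ denote the measure-zero set of $\bb$'s for which LICQ fails somewhere in the feasible set; for $\bb\notin E$ one can apply Proposition~\ref{pr:slack1}, and a further union bound over the finitely many candidate active sets $S\subseteq[m]$ with linearly independent $\{\bA_j\}_{j\in S}$ upgrades its conclusion to: SC holds at \emph{every} KKT point of the instance $(\bq,\bb)$ for a.e.\ $\bq$. Since the joint density factorizes well enough for Fubini, integrating this statement in $\bb$ over the complement of $E$ converts it to: SC holds at every KKT point for almost every $(\bq,\bb)$, which is exactly the corollary.

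The main obstacle is conceptual rather than technical: the active set $\mathcal{A}(\bx^*)$ is not fixed in advance because $\bx^*$ itself depends on $(\bq,\bb)$, so one cannot directly apply the null-measure arguments to a single $S$ and be done. The resolution is the finiteness of the power set $2^{[m]}$, which legitimizes enumerating over candidate active sets before taking probabilities; once this enumeration is in place, both the LICQ step and the application of Proposition~\ref{pr:slack1} become uniform across all KKT points, and the remaining measure-theoretic bookkeeping is routine.
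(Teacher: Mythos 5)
Your proposal is correct and follows essentially the same route as the paper: the paper likewise uses the randomness of $\bb$ to rule out linearly dependent active constraints with probability one (via the forced affine relation $\sum_j \alpha_j \bb_j = \bb_i$ among active rows), and then reruns the dimension-counting map argument of \prref{pr:slack1} over the randomness of $\bq$. Your additional bookkeeping (the union bound over the finitely many candidate active sets and the Fubini/conditioning step) merely makes explicit what the paper leaves implicit.
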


This result shows that for a certain generic choice of objective functions, the SC condition is satisfied. As we will see in the next section,  this SC condition leads to the equivalence of SOSP1 and SOSP2 conditions. Hence, instead of working with the computationally intractable SOSP2 condition, we can use a tractable SOSP1 condition for developing algorithms. In other words, by adding a small random linear perturbation to the objective function, which does not practically change the landscape of the optimization problem, we can avoid the computational intractability of SOSP2.

\section{Almost Sure Equivalence of SOSP1 and SOSP2}
To understand the relation between SOSP1 and SOSP2, let us consider the following example.
{\noindent{\it Example 1.} } Consider the following box constrained quadratic problem:
\begin{equation}
\minimize_{x_1,x_2}\quad -x^2_1-x^2_2,\quad\textrm{s.t.}\quad0\le x_1\le 1,\quad 0\le x_2\le 1.
\end{equation}
Clearly the point $\bx^* = (0,0)$ is an SOSP1. This is because the gradient of the objective is zero, both inequality constraints are active at this point and  $\dim(\by)=0$ in~\eqref{eq.cond1}. However, the point $\bx^*=(0,0)$ is {\it not} an SOSP2 according to the definition in~\eqref{eq.cond2}.  This is because the condition~$\nabla f(\bx^*)^{\T}(\bx-\bx^*)=0$ is  true for all feasible $\bx $, but $(\bx-\bx^*)^{\T}\nabla^2 f(\bx^*)(\bx-\bx^*) = -2$ for $\bx = (1,1)$. \hfill $\blacksquare$

The above example suggests that  condition \eqref{eq.cond2} is {\it stronger} than \eqref{eq.cond1}, even when  $\epsilon_H =0, \epsilon_G=0$.
Indeed, one can show that any point $\bx^*$ satisfying \eqref{eq.cond2} also satisfies \eqref{eq.cond1}. More importantly, these conditions become {\it equivalent} when the SC condition \eqref{eq:complementarity} holds true. These results are presented in~\ref{sec:eq00} -- \ref{sec:continuity} and is summarized in Proposition~\ref{pr:00} below.
\begin{proposition}\label{pr:00}
Suppose that every KKT solution $(\bx^*, \bmu^*)$ of problem~\eqref{eq.pro} satisfies the SC condition \eqref{eq:complementarity}. Then  $(0,0)$-SOSP1  in  \eqref{eq.cond1},  and the $(0,0)$-SOSP2 in \eqref{eq.cond2} are equivalent in the sense that for any $\bx^*\in \mathcal{X}$, if  it is  a $(0,0)$-SOSP1, then it is also a $(0,0)$-SOSP2 solution, and vice versa.
\end{proposition}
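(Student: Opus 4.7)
The plan is to reduce both notions to a common description of admissible directions via the KKT system. First, I would observe that the first-order parts \eqref{eq.cond11} and \eqref{eq.cond21} (with $\epsilon_G=0$) are equivalent without invoking SC: $\|g_\pi(\bx^*)\|=0$ is just the variational inequality $\nabla f(\bx^*)^{\T}(\bx-\bx^*)\ge 0$ for every $\bx\in\mathcal{X}$, and the localization $\|\bx-\bx^*\|\le 1$ in \eqref{eq.cond21} can be discarded by scaling directions. Both are therefore equivalent to $\bx^*$ being a KKT point, yielding multipliers $\bmu^*\ge 0$ with $\nabla f(\bx^*) = -\sum_{j\in\cA(\bx^*)}\bmu^*_j\bA_j^{\T}$ (inactive multipliers vanish by complementary slackness). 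Since every KKT solution is assumed to satisfy SC, we moreover have $\bmu^*_j>0$ for every $j\in\cA(\bx^*)$ at this particular $\bx^*$.

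For the direction SOSP2 $\Rightarrow$ SOSP1 (which, interestingly, does not require SC) I would pick any $\by$ with $\bA'(\bx^*)\by=0$. Because the inactive constraints are strictly slack at $\bx^*$, the points $\bx^*\pm t\by$ remain feasible for all sufficiently small $t>0$. The KKT identity then gives $\nabla f(\bx^*)^{\T}\by = -\sum_{j\in\cA(\bx^*)}\bmu^*_j(\bA_j\by)=0$, so $\bx^*\pm t\by$ are valid test points in \eqref{eq.cond22}; dividing the resulting inequality by $t^2$ yields $\by^{\T}\nabla^2 f(\bx^*)\by\ge 0$, which is exactly \eqref{eq.cond12}.

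The substance of the proposition lies in SOSP1 $\Rightarrow$ SOSP2, where SC is essential. I would take any feasible $\bx$ satisfying $\nabla f(\bx^*)^{\T}(\bx-\bx^*)=0$ and substitute the KKT expansion of $\nabla f(\bx^*)$ to obtain
\begin{align}
0 \;=\; \nabla f(\bx^*)^{\T}(\bx-\bx^*) \;=\; -\sum_{j\in\cA(\bx^*)}\bmu^*_j\,\bA_j(\bx-\bx^*).
\end{align}
Feasibility forces $\bA_j(\bx-\bx^*)\le 0$ for each active $j$, while SC supplies $\bmu^*_j>0$, so every summand is nonnegative and each $\bA_j(\bx-\bx^*)$ must vanish. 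Hence $\bx-\bx^*\in\ker(\bA'(\bx^*))$, and applying \eqref{eq.cond12} with $\by:=\bx-\bx^*$ delivers \eqref{eq.cond22}.

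The only real obstacle is exactly the phenomenon already illustrated by Example~1: without SC an active constraint may carry a zero multiplier, in which case the gradient cannot ``see'' the corresponding face of $\mathcal{X}$ and the identity $\nabla f(\bx^*)^{\T}(\bx-\bx^*)=0$ no longer implies $\bA_j(\bx-\bx^*)=0$. SC is precisely the assumption that rules out this degeneracy; once imposed, the argument collapses to the short linear-algebra identification of admissible tangent directions with $\ker(\bA'(\bx^*))$, and no compactness or continuity considerations are needed because the statement is exact ($\epsilon_G=\epsilon_H=0$).
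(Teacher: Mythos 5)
Your proposal is correct and takes essentially the same route as the paper: the SOSP1 $\Rightarrow$ SOSP2 direction is the identical KKT-plus-SC sign argument forcing $\bA_j(\bx-\bx^*)=0$ for all $j\in\cA(\bx^*)$ and then invoking \eqref{eq.cond12}, while the converse uses the same idea of turning a null-space direction $\by$ into a feasible test point orthogonal to $\nabla f(\bx^*)$ (your ``sufficiently small $t$'' step is just a streamlined version of the paper's explicit $\epsilon/2$ rescaling of $\by$). Your side remark that SC is not actually needed for SOSP2 $\Rightarrow$ SOSP1 is accurate and consistent with how the paper's Part II really uses only the KKT identity.
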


{In view of {\it Example 1}, the above equivalence result is somewhat surprising.
However, by applying Proposition \ref{pr:slack1}, one can slightly perturb the problem in Example 1 by adding to its objective a random linear term in the form of $\bq^T \bx$ (with $\|\bq\|$ being very small) to satisfy the SC condition. One can check that after this perturbation, the two conditions become the same.}

{\noindent} Next we proceed by analyzing the {\it approximate} second-order conditions of SOSP1 and SOSP2. 

\begin{corollary}\label{co:0eh}
The second-order conditions \eqref{eq.cond12} and \eqref{eq.cond22} are equivalent in the following sense: suppose the SC condition \eqref{eq:complementarity} holds, then any $(0,{\epsilon}_H)$-SOSP2 
must satisfy $(0,\epsilon_H)$-SOSP1, and vice versa.
\end{corollary}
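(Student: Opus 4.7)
My plan is to follow the proof of Proposition~\ref{pr:00} closely, since only the second-order threshold has been relaxed from $0$ to $-\epsilon_H$ while the first-order parts (both with slack $0$) remain unchanged. In particular, both $(0,\epsilon_H)$-SOSP1 and $(0,\epsilon_H)$-SOSP2 impose the exact first-order condition, hence force $\bx^*$ to be a KKT solution, and by the SC assumption the associated multipliers $\bmu^*$ satisfy $\bmu^*_j > 0$ precisely for $j \in \mathcal{A}(\bx^*)$. This single fact converts between the index-set description of feasible directions used in \eqref{eq.cond12} and the gradient-orthogonality description used in \eqref{eq.cond22}.

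The key identification step, which carries over verbatim from the proof of Proposition~\ref{pr:00}, is the following: using KKT stationarity $\nabla f(\bx^*) = -\sum_{j\in\mathcal{A}(\bx^*)} \bmu^*_j \bA_j$ and $\bA_j\bx \le \bb_j$, for every $\bx\in\mathcal{X}$
\begin{equation*}
\nabla f(\bx^*)^{\T}(\bx-\bx^*) = -\sum_{j\in\mathcal{A}(\bx^*)} \bmu^*_j(\bA_j\bx - \bb_j) \;\ge\; 0,
\end{equation*}
with equality if and only if $\bA_j\bx = \bb_j$ for every $j\in\mathcal{A}(\bx^*)$, i.e., iff $\bA'(\bx^*)(\bx-\bx^*)=0$. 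This is exactly the place where SC is used: without $\bmu^*_j>0$ for active indices, equality in the sum would not force each summand to vanish. Consequently, the critical set $\{\bx\in\mathcal{X}:\nabla f(\bx^*)^{\T}(\bx-\bx^*)=0\}$ in \eqref{eq.cond22} coincides with $\mathcal{X}\cap\ker\bA'(\bx^*)$.

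From here both directions are short. For SOSP1$\Rightarrow$SOSP2, any $\bx$ in the critical set of \eqref{eq.cond22} yields $\by := \bx-\bx^*\in\ker\bA'(\bx^*)$ by the identification above, and \eqref{eq.cond12} immediately gives the Hessian bound required by \eqref{eq.cond22}. For SOSP2$\Rightarrow$SOSP1, given $\by$ with $\bA'(\bx^*)\by=0$, I form $\bx_t := \bx^* + t\by$: active constraints stay tight for every $t$, while inactive constraints satisfy the \emph{strict} inequality $\bA_j\bx^*<\bb_j$, so continuity produces $t_0>0$ with $\bx_t\in\mathcal{X}$ for all $|t|\le t_0$. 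By construction $\nabla f(\bx^*)^{\T}(\bx_t-\bx^*)=0$, so plugging $\bx_t$ into \eqref{eq.cond22} gives $t^2\,\by^{\T}\nabla^2 f(\bx^*)\by \ge -\epsilon_H\|t\by\|^2$, and cancelling $t^2$ recovers \eqref{eq.cond12}.

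The main (minor) obstacle is that the scaling cancellation in the last step requires one to read \eqref{eq.cond12} and \eqref{eq.cond22} as homogeneous quadratic inequalities, i.e., with implicit right-hand side $-\epsilon_H\|\cdot\|^2$, equivalently as the minimum-eigenvalue bound in \eqref{eq:Hessian:small}. I would state this normalization explicitly at the start of the proof; without it the quadratic form loses homogeneity and the $t\to 0$ perturbation argument collapses. Up to this reading, the corollary follows by inserting $-\epsilon_H$ in place of $0$ everywhere in the second-order part of the proof of Proposition~\ref{pr:00}.
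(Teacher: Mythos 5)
Your argument is correct and is essentially the paper's own proof: the paper disposes of this corollary by invoking the proof of Proposition~\ref{pr:00} with $\by=\bx-\bx^*$ and $\|\by\|\le 1$, which is exactly the identification of the critical set $\{\bx\in\mathcal{X}:\nabla f(\bx^*)^{\T}(\bx-\bx^*)=0\}$ with $\mathcal{X}\cap\mathsf{Null}(\bA'(\bx^*))$ via SC that you carry out, and your small-$t$ feasibility argument is just a streamlined version of the paper's Part~II construction of a feasible point along a null direction. Your explicit caveat that the $\epsilon_H$-inequalities must be read in normalized/homogeneous form (equivalently as the eigenvalue bound \eqref{eq:Hessian:small}) so that the $t^2$ factor cancels is precisely what the paper leaves implicit in the phrase ``considering $\|\by\|\le 1$,'' so it is a fair clarification rather than a deviation.
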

Although at this point we have not shown the equivalence of $(\epsilon_G,\epsilon_H)$-SOSP1 and $(\epsilon_G,\epsilon_H)$-SOSP2 (a result that remains very challenging), we provide an alternative result showing that $(\epsilon_G,\epsilon_H)$-SOSP1 is a valid approximation of $(0,0)$-SOSP1, which in turn is equivalent to $(0,0)$-SOSP2 by Proposition \ref{pr:00}.
In particular, we show that
$(\epsilon_G,\epsilon_H)$-SOSP1 becomes $(0,0)$-SOSP1 as $(\epsilon_G,\epsilon_H)\to 0$.
\begin{proposition}\label{pr:continuity}
 Let $\{\bx^{(r)}\}_{r=1}^{\infty}$ be a sequence generated by an algorithm. Assume for each $r$, the point $\bx^{(r)}$ be an $(\epsilon_G^{(r)},\epsilon_H^{(r)})$-SOSP1. Assume further that $\{(\epsilon_G^{(r)},\epsilon_H^{(r)})\}$ converges to the point $(0,0)$. Then, any limit point of the sequence $\{\bx^{(r)}\}_{r=1}^{\infty}$ is an exact SOSP1.
\end{proposition}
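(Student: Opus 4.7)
The plan is to extract a convergent subsequence, handle the first-order condition by a standard continuity argument, and then deal with the trickier second-order condition by carefully tracking how the active set behaves in the limit. Let $\bx^{*}$ be a limit point, and pick a subsequence $\{\bx^{(r_k)}\}$ with $\bx^{(r_k)} \to \bx^{*}$. For the first-order condition, the map $\bx \mapsto g_\pi(\bx)$ defined in \eqref{eq:projection} is continuous: it is the composition of $\nabla f$ (continuous by \asref{as1}), an affine map, and the metric projection $\pi_{\mathcal{X}}$ (non-expansive since $\mathcal{X}$ is closed and convex). Hence $\|g_\pi(\bx^{*})\| = \lim_{k\to\infty} \|g_\pi(\bx^{(r_k)})\| \le \lim_{k\to\infty} \epsilon_G^{(r_k)} = 0$, giving \eqref{eq.cond11:exact}.

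The main obstacle is the second-order condition, because the active set $\mathcal{A}(\cdot)$ is not continuous in $\bx$. To handle this, I would use that $\mathcal{A}(\bx^{(r_k)}) \subseteq [m]$ takes only finitely many values, so by passing to a further subsequence I may assume $\mathcal{A}(\bx^{(r_k)}) = \mathcal{A}_\infty$ is a fixed set for all $k$. The key observation is that the inclusion goes in the favorable direction: for every $j \in \mathcal{A}_\infty$, we have $\bA_j \bx^{(r_k)} = \bb_j$ for all $k$, and taking $k\to\infty$ yields $\bA_j \bx^{*} = \bb_j$, hence $\mathcal{A}_\infty \subseteq \mathcal{A}(\bx^{*})$. (Constraints that become tight only in the limit are allowed, but no constraint can ``fall out'' of the active set in the limit.)

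Consequently, the null space at the limit is contained in the null space along the subsequence: any $\by$ satisfying $\bA'(\bx^{*}) \by = 0$ in particular satisfies $\bA_j \by = 0$ for all $j \in \mathcal{A}_\infty$, i.e., $\bA'(\bx^{(r_k)}) \by = 0$. Applying the assumed $(\epsilon_G^{(r_k)}, \epsilon_H^{(r_k)})$-SOSP1 condition \eqref{eq.cond12} at $\bx^{(r_k)}$ to this $\by$ yields
\begin{equation*}
\by^{\T} \nabla^2 f(\bx^{(r_k)}) \by \ge -\epsilon_H^{(r_k)}.
\end{equation*}
Since $\nabla^2 f$ is continuous (Lipschitz by \asref{as1}) and $\epsilon_H^{(r_k)} \to 0$, passing to the limit gives $\by^{\T} \nabla^2 f(\bx^{*}) \by \ge 0$. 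As this holds for every $\by$ in the null space of $\bA'(\bx^{*})$, \eqref{eq.cond12:exact} is established, completing the proof that $\bx^{*}$ is an exact SOSP1.

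The only nontrivial point is the one-sided semicontinuity of the active set, and it fortunately works in the right direction for this result; no strict complementarity or other delicate assumption is required here, unlike in the equivalence results of Section~3.
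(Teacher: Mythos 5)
Your proof is correct and takes essentially the same route as the paper: continuity of $g_\pi$ handles \eqref{eq.cond11:exact}, and the one-sided semicontinuity of the active set yields the inclusion of the null space at $\bx^*$ into the null spaces along the iterates, after which the approximate condition \eqref{eq.cond12} passes to the limit by continuity of $\nabla^2 f$. The only cosmetic difference is that the paper shows directly that every constraint inactive at $\bx^*$ stays inactive for all large $r$ (so $\mathcal{A}(\bx^{(r)})\subseteq\mathcal{A}(\bx^*)$ eventually, with no further subsequence), whereas you freeze the active set along a further subsequence; both deliver the same key inclusion.
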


While in general SOSP2 is stronger than SOSP1, using SOSP1 has the following advantages:\\

\noindent{\bf 1)} For a given $\bx$, checking whether SOSP1 holds  is computationally tractable, since it only requires finding the active constraints, computing its null space, and performing an eigenvalue decomposition. On the other hand, as proved in \cite{mei18}, checking SOSP2 is NP-hard even for quadratic $f(\cdot)$. \\
\noindent{\bf 2)} Intuitively, it is relatively easy to design an algorithm based on active constraints: When a sequence of iterates approaches an FOSP, the corresponding active set remains the same (see \cite[Proposition 1.37]{bertsekas2014constrained}, \cite[Proposition 3]{gafni1984two}). Therefore locally the inequality constrained problem is reduced to an equality constrained problem, whose second-order conditions are much easier to satisfy; see 
\cite{facchinei1998accurate}\cite{facchinei1998convergence}. \\
\noindent{{\bf 3)} As we have shown, the SC condition is satisfied with probability one for problems with random data, implying that finding SOSP1 is already good enough for these problems.} \\
{Clearly, our proposed solution concept  SOSP1 represents an interesting tradeoff between the quality of the solutions and computational complexity of the resulting algorithms. In what follows, we will design efficient algorithms that can compute such a solution concept.}

\section{SNAP for Computing SOSP1}

\subsection{Algorithm Description}\label{sec:freespace}

Our proposed algorithm successively performs two main steps: a conventional projected gradient descent (PGD)  step and a {negative curvature descent} (NCD) step. Assuming that the feasible set $\mathcal{X}$ is easy to project (e.g., the non-negativity constraints for the NMF problem), the PGD   finds an approximate first-order solution efficiently, while the negative curvature descent step explores curvature around a first-order stationary solution to move the iterates forward.

To provide a detailed description of the algorithm, we will first introduce the notion of the {\it feasible directions} using the directions $\by$ in \eqref{eq.cond12:exact} and \eqref{eq.cond12}. In particular,  for a given point $\bx\in \mathcal{X}$, we define the feasible direction subspace as
$\mathcal{F}(\bx) = \mathsf{Null}(\bA'(\bx))$, where $\mathsf{Null}(\bA'(\bx))$ denotes the null space of matrix $\bA'(\bx)$.
Such directions  are useful for extracting negative curvature directions. We will refer to the subspace $\mathcal{F}(\bx)$ as  {\it free space} and  we refer to its orthogonal complement  as {\it active space}.

\begin{algorithm}[th]
\caption{Negative-curvature grAdient Projection algorithm (SNAP)}
\label{alg:p1}\footnotesize
\begin{algorithmic}[1]\small
\State {\bfseries Input:} $\bx^{(1)},\epsilon_G,\epsilon_H,L_1,L_2,\alpha_\pi=1/L_1,\delta,\bA,\bb, r_{\textsf{th}}, \textsf{flag}=\Diamond, \textsf{flag}_{\alpha}=\Diamond, r_{\textsf{last}}=0$
\For {$r=1,\ldots$}
\If {$\|g_\pi(\bx^{(r)})\|\le {\epsilon_G}$ and   $(\textsf{flag}_{\alpha}=\Diamond$ or $(\textsf{flag}_{\alpha}=\emptyset$  and $r-r_{\textsf{last}}\ge r_{\textsf{th}}))$}
\State $[\textsf{flag}, \bv(\bx^{(r)}), -{\epsilon'_H(\delta)}]= \textsf{ \it Negative-Eigen-Pair}(\bx^{(r)},f,\delta)$
\If {$\textsf{flag}=\Diamond$}
\State Compute $q_{\pi}(\bx^{(r)})$ by \eqref{eq.compq}
\State Choose $\bv(\bx^{(r)})$ such that $q_{\pi}(\bx^{(r)})^{\T}\bv(\bx^{(r)})\le0$
\If {$\frac{L_1\epsilon'_H(\delta)}{L_2}q_{\pi}(\bx^{(r)})^{\T}\bv(\bx^{(r)})-\frac{63L_1\epsilon_H'^3(\delta)}{128L^2_2}\ge-\|q_{\pi}(\bx^{(r)})\|^2$}
\State { $\bd^{(r)}= -q_{\pi}(\bx^{(r)})$} \Comment{Choose gradient direction}
\Else
\State $\bd^{(r)}=\bv(\bx^{(r)})$  \Comment{Choose negative curvature direction}
\EndIf
\State Update ($\bx^{(r+1)}$, $\textsf{flag}_{\alpha}$) by  \algref{alg:p3}\Comment{Perform line search}
\If {$\textsf{flag}_{\alpha}=\emptyset$}
\State  $r_{\textsf{last}}\leftarrow r$
\EndIf
\Else \State Output $\bx^{(r)}$
\EndIf
\Else \State Update $\bx^{(r+1)}$ by \eqref{eq.pgd}\Comment{{Perform PGD}}
\EndIf
\EndFor
\end{algorithmic}
\end{algorithm}

The input of the algorithms are some constants related to the problem data, the initial solution $\bx^{(1)}$, and parameters $\epsilon_G,\epsilon_H$. Further $\alpha_{\pi}>0$ is the step-size, and $\delta>0$ is the accuracy of the curvature finding algorithm, both will be determined later.

It is important to note that as long as the computation of gradient, Hessian, and projection can be done in a polynomial number of floating point operations, the computational complexity of SNAP becomes polynomial. For most practical problems, it is reasonable to assume that gradient and Hessian and can be computed efficiently. In addition, projection to linear inequality constraints is well-studied and can be done polynomially under reasonable oracles \cite{nemirovski1995information}.

\noindent{\bf {The PGD step (line {21}).}} The conventional PGD, given below,  is implemented in line 21 of \algref{alg:p1}, with a constant step-size $\alpha_\pi>0$:
\begin{equation}\label{eq.pgd}
\bx^{(r+1)}=\pi_{\mathcal{X}}(\bx^{(r)}-\alpha_\pi\nabla f(\bx^{(r)})).
\end{equation}
The PGD  guarantees that the objective value decreases so that the algorithm can achieve some approximate FOSPs, i.e., $\|g_{\pi}(\bx^{(r)})\|\le\epsilon_G$ efficiently (assuming that the projection can be done relatively easily). The procedure stops whenever the FOSP1 gap $\|g_\pi(\bx^{(r)})\|\le \epsilon_G$.

\noindent{\bf Negative curvature descent (NCD line 4-19).}
After PGD has been completed, we know that $\|g_\pi(\bx^{(r)})\|$ is already small. Suppose that the $(\epsilon_G,\epsilon_H)$-SOSP1 solution has not been found yet.  Then our next step is to design an update direction to increase $\lambda_{\min}(\bH_{\bP}(\bx^{(r)}))$.
Towards this end,  a NCD step will be performed (\algref{alg:p1}, line 4--19), which constructs update directions that can exploit curvature information about the Hessian matrix, while ensuring that the iterates stay in the feasible region.
The NCD further contains the following sub-procedures.\\
\noindent{\bf (1) Extracting negative curvature.}
 Assuming that \eqref{eq:Hessian:small} {does not hold}. First, a procedure \textsf{\it Negative-Eigen-Pair} is called, which exploits some second-order information about the function at $\bx^{(r)}$, and returns an approximate eigen-pair $\{\bv(\bx^{(r)}), -\epsilon'_H(\delta)\}$ of the Hessian $\nabla^2 f(\bx^{(r)})$. Such an approximate eigen-pair should satisfy the following requirements (with probability at least $1-\delta$):
\begin{enumerate}
 	\item $\bv(\bx^{(r)})\in\mathcal{F}(\bx^{(r)})$ and $\|\bv(\bx^{(r)})\|=1$;
 	\item $\bv(\bx^{(r)})^T \nabla^2 f(\bx^{(r)})\bv(\bx^{(r)}) \le -\epsilon'_H(\delta)$ for some  $\epsilon'_H$  where $\exists \gamma>0$ such that $\gamma \epsilon'_H(\delta)>\epsilon_H$.
\end{enumerate}

If $\{\bv(\bx^{(r)}), -\epsilon'_H(\delta)\}$ satisfies all the above conditions,  \textsf{ \it Negative-Eigen-Pair} returns $\Diamond$, otherwise, it returns $\emptyset$. As long as \eqref{eq:Hessian:small} holds, many existing algorithms can achieve the two conditions stated above in a finite number of iterations {(e.g., the power or Lanczos method)}. However, these methods typically require to evaluate the Hessian matrix or Hessian-vector product. Subsequently, we will design a new procedure that only utilizes gradient information for such purposes.\\

\noindent{\bf (2) Selection of update direction.} {First, note that whichever choice of the directions we make, we will have $\bd^{(r)}\in \mathcal{F}(\bx^{(r)})$.} Second, by the $L_1$-Lipschitz continuity, we have
\begin{equation}\label{eq.des1}
f(\bx^{(r)}+\alpha \bd^{(r)})\le  f(\bx^{(r)})+{\alpha} q_{\pi}(\bx^{(r)})^{\T}\bd^{(r)}+\frac{\alpha^2L_1}{2}\|\bd^{(r)}\|^2,
\end{equation}
since $\nabla f(\bx^{(r)})=\bP(\bx^{(r)})\nabla f(\bx^{(r)})+\bP_\perp(\bx^{(r)})\nabla f(\bx^{(r)})$ and $q_{\pi}(\bx^{(r)})=\bP(\bx^{(r)})\nabla f(\bx^{(r)})$. Similarly, by $L_2$-Lipschitz continuity, we have
\begin{equation}\label{eq.des2}
f(\bx^{(r)}+\alpha \bd^{(r)})\le  f(\bx^{(r)})+\alpha q_{\pi}(\bx^{(r)})^{\T}\bd^{(r)}+\frac{\alpha^2}{2}(\bd^{(r)})^{\T}\nabla^2 f(\bx^{(r)})\bd^{(r)}+\frac{\alpha^3}{6}L_2 \|\bd^{(r)}\|^3.
\end{equation}
Therefore, it can be observed that the descent of the objective value is determined by the choice of $\bd^{(r)}$.

The actual update direction we use is chosen between the direction $\bv(\bx^{(r)})\in \mathcal{F}(\bx^{(r)})$ found in the previous step, and a direction {$q_\pi(\bx^{(r)})\in \mathcal{F}(\bx^{(r)})$} computed by directly projecting $\nabla f(\bx^{(r)})$ to the subspace of feasible directions. The selection criteria, given in {line 8} of Algorithm 1, is motivated by the following descent properties (note $\|\bv(\bx^{(r)})\|=1$).
\begin{lemma}\label{le.selection}
If {$\bx^{(r)}$ is updated by \algref{alg:p3} (the line search)  and $\alpha^{(r)}_{\max}$ in Algorithm 2 (linear search) is not chosen}, the minimum descent of the objective value by choosing $\bd^{(r)}=-q_{\pi}(\bx^{(r)})$ is no less than the one by selecting $\bd^{(r)}=\bv(\bx^{(r)})$, and vice versa.
\end{lemma}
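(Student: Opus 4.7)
The lemma asserts that the branching test on line 8 picks whichever of the two candidate directions, $-q_\pi(\bx^{(r)})$ or $\bv(\bx^{(r)})$, delivers at least as much guaranteed descent under the line search of \algref{alg:p3}. My plan is to (i) compute an explicit lower bound on the objective decrease achieved by each direction when the step size is chosen in the interior (i.e., when the barrier $\alpha^{(r)}_{\max}$ is not binding), and then (ii) verify that the inequality tested on line 8 is an algebraic rearrangement of ``gradient descent bound $\ge$ NCD descent bound''.

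For the gradient direction, I would start from \eqref{eq.des1} with $\bd^{(r)} = -q_\pi(\bx^{(r)})$. Because $\bd^{(r)} \in \mathcal{F}(\bx^{(r)})$ and $q_{\pi}(\bx^{(r)}) = \bP(\bx^{(r)}) \nabla f(\bx^{(r)})$, the component of $\nabla f(\bx^{(r)})$ in the active subspace contributes nothing to $\nabla f(\bx^{(r)})^\T \bd^{(r)}$, so the quadratic upper bound collapses to $f(\bx^{(r)}) - \alpha \|q_\pi\|^2 + \frac{\alpha^2 L_1}{2} \|q_\pi\|^2$. Minimizing this over $\alpha \ge 0$ gives the interior optimum $\alpha = 1/L_1$ and a guaranteed descent $\Delta_G = \|q_\pi(\bx^{(r)})\|^2/(2L_1)$. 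This is exactly the descent that \algref{alg:p3} recovers whenever it is not forced to stop at $\alpha^{(r)}_{\max}$.

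For the negative-curvature direction, I would start from \eqref{eq.des2} with $\bd^{(r)} = \bv(\bx^{(r)})$. Substituting the properties supplied by \textsf{Negative-Eigen-Pair} together with the sign convention enforced on line 7 (i.e., $\|\bv\|=1$, $\bv \in \mathcal{F}(\bx^{(r)})$, $q_\pi^\T \bv \le 0$, and $\bv^\T \nabla^2 f(\bx^{(r)}) \bv \le -\epsilon'_H(\delta)$) turns the upper bound into the cubic $f(\bx^{(r)}) + \alpha q_\pi^\T \bv - \frac{\alpha^2 \epsilon'_H(\delta)}{2} + \frac{\alpha^3 L_2}{6}$. Solving the first-order condition $\frac{L_2}{2}\alpha^2 - \epsilon'_H(\delta)\alpha + q_\pi^\T \bv = 0$ for its positive root and substituting back into the cubic (equivalently, evaluating the step prescribed by \algref{alg:p3}) produces a guaranteed descent of the form $\Delta_N = -\frac{\epsilon'_H(\delta)}{2 L_2}\, q_\pi^\T \bv + \frac{63\,\epsilon'^3_H(\delta)}{256\, L_2^2}$; because $q_\pi^\T \bv \le 0$, the first term is nonnegative and $\Delta_N > 0$.

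Once the two descent bounds are in hand, multiplying the inequality $\Delta_G \ge \Delta_N$ through by $2L_1$ and moving $\|q_\pi\|^2$ to the right-hand side reproduces exactly the line-8 test. Hence the test holds if and only if the gradient direction is guaranteed at least as much descent, and fails if and only if the curvature direction is guaranteed at least as much descent, which settles both halves of the lemma. The main obstacle will be the NCD calculation: I have to handle the cubic minimization in $\alpha$ cleanly enough to reproduce the exact coefficient $63/256$ dictated by line 8, which requires substituting the positive root of the quadratic derivative back into the cubic and simplifying with some care. Everything else is direct algebra.
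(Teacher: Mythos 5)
Your high-level plan (bound the guaranteed descent for each candidate direction, then observe that the test on line 8 of Algorithm 1 is the comparison of those two bounds) is the same strategy the paper uses, but the way you compute the two bounds has a genuine gap. The lemma is about the descent that the backtracking line search of \algref{alg:p3} actually guarantees, not about the minimum of the quadratic/cubic models over $\alpha$. On the gradient side, the line search does not ``recover exactly'' the model-optimal decrease $\|q_{\pi}(\bx^{(r)})\|^2/(2L_1)$: when $\alpha^{(r)}_{\max}$ is not accepted, the backtracking loop terminates at some step satisfying the sufficient-decrease test \eqref{eq.back} with $\rho(\alpha)=-\alpha\|q_{\pi}(\bx^{(r)})\|^2$, and all one can say (as in \leref{le.2}) is that the accepted $\alpha$ is at least $1/(2L_1)$, which yields the guaranteed decrease $\tfrac{3}{8L_1}\|q_{\pi}(\bx^{(r)})\|^2$ (or $\tfrac{1}{4L_1}\|q_{\pi}(\bx^{(r)})\|^2$ via the Armijo test), not $\tfrac{1}{2L_1}\|q_{\pi}(\bx^{(r)})\|^2$.

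The more serious problem is the curvature side. Exact minimization of the cubic upper bound $\alpha\, q_{\pi}^{\T}\bv-\tfrac{\alpha^2\epsilon'_H(\delta)}{2}+\tfrac{\alpha^3L_2}{6}$ does \emph{not} produce an expression of the form $-\tfrac{\epsilon'_H(\delta)}{2L_2}q_{\pi}^{\T}\bv+\tfrac{63\,\epsilon'^3_H(\delta)}{256L^2_2}$: the stationary point is $\alpha=\bigl(\epsilon'_H(\delta)+\sqrt{\epsilon'^2_H(\delta)-2L_2\,q_{\pi}^{\T}\bv}\bigr)/L_2$, so the optimal value involves powers of that square root and is not affine in $q_{\pi}^{\T}\bv$; e.g.\ at $q_{\pi}^{\T}\bv=0$ it equals $-\tfrac{2}{3}\epsilon'^3_H(\delta)/L^2_2$, not $-\tfrac{63}{256}\epsilon'^3_H(\delta)/L^2_2$. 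Hence your step (ii), that ``$\Delta_G\ge\Delta_N$ multiplied through by $2L_1$'' reproduces line 8 exactly, cannot go through with these $\Delta$'s. The constants in line 8 do not come from minimizing the models; they come from the line-search guarantees: the paper plugs the lower bounds on the accepted backtracking step ($\alpha\ge 1/(2L_1)$ for $\bd^{(r)}=-q_{\pi}(\bx^{(r)})$ from \leref{le.2}, and $\alpha\ge 3\epsilon'_H(\delta)/(8L_2)$ for $\bd^{(r)}=\bv(\bx^{(r)})$ from \leref{le.3}, using the acceptance rule \eqref{eq:alpha}) into the descent estimates \eqref{eq.des1} and \eqref{eq.des2}, obtaining $d_q=-\tfrac{3}{8L_1}\|q_{\pi}\|^2$ and $d_{\bv}=\tfrac{3\epsilon'_H(\delta)}{8L_2}q_{\pi}^{\T}\bv-\mathcal{O}(\epsilon'^3_H(\delta)/L^2_2)$, and then rearranges $d_q<d_{\bv}$ (multiplying by $8L_1/3$) into the line-8 inequality. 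To repair your argument you should replace the model-minimization step by exactly this use of the worst-case accepted step sizes; the rest of your outline (sign of $q_{\pi}^{\T}\bv$, restriction to the free space, and the final rearrangement) is sound.
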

\noindent{\bf(3) Backtracking line search (\algref{alg:p3}).} The third sub-procedure uses line search to determine the step-size so that the new and feasible iterate $\bx^{(r+1)}$ can be generated. The key in this step is to make sure that, either the new iterate achieves some kind of ``sufficient'' descent, or  it touches the boundary of the feasible set. We use ${\small\textsf{flag}_{\alpha}}$ \normalsize to denote whether the updated iterate touches a new boundary or not. Since the direction is already fixed to be in the {\it free space}, after performing the line search,  the dimension of the feasible directions will be non-increasing, i.e.,  $\dim(\mathcal{F}(\bx^{(r+1)}))\le\dim(\mathcal{F}(\bx^{(r)}))$. 

To understand the algorithm, let us
first define the set of {\it inactive constraints} as
\begin{equation}
\overbar{\bA'}\bydef\left[\begin{array}{lll} \ldots & \bA_i & \ldots\end{array}\right]^{\T}\in\mathbb{R}^{|\overbar{\cA}(\bx)|\times d},\quad \forall i\in\overbar{\mathcal{A}}(\bx).\label{eq:barA}
\end{equation}

The details of the line search algorithm is shown in \algref{alg:p3}.
\begin{algorithm}[t]
\caption{Line search algorithm}
\label{alg:p3}
\begin{algorithmic}[1]
\State {\bfseries Input:} $\bx^{(r)},\bd^{(r)},\epsilon'_H(\delta),\lambda,\bA,\bb$
\If {$\exists i, (\overbar{\bb}'-\overbar{\bA}'(\bx^{(r)})\bx^{(r)})_i/(\overbar{\bA}'(\bx^{(r)})\bd^{(r)})_i>0$}
\State Compute $\alpha^{(r)}_{\max}$ by 
\begin{equation}\label{eq.alphamax}
\alpha^{(r)}_{\max}\bydef \max\{\alpha\ge0\mid { \bx^{(r)}+\alpha \bd^{(r)}}\in\mathcal{X}\}
\end{equation}
\Else
\State Set $\alpha^{(r)}_{\max}=1/L_1$ 
\EndIf
\State Update $\bx^{(r+1)}$ by: $\bx^{(r+1)}=\bx^{(r)}+\alpha^{(r)}_{\max}\bd^{(r)}$
\If {$f(\bx^{(r)}+\alpha^{(r)}_{\max}\bd^{(r)})<f(\bx^{(r)})$}
\State return ($\bx^{(r+1)}$, $\textsf{flag}=\Diamond$) 
\Else
\State $\alpha\leftarrow\alpha^{(r)}_{\max}$
\If {$\bd^{(r)}=-{q_{\pi}(\bx^{(r)})}$}
\State
\begin{equation}\label{eq.defofrho}
{\rho(\alpha)=-\alpha \|q_{\pi}(\bx^{(r)})\|^2}
\end{equation}
\Else
\State
\begin{equation}\label{eq:alpha}
\rho(\alpha)=-\frac{\alpha^2\epsilon'_H(\delta)}{4}
\end{equation}
\EndIf
\While
{\begin{equation}\label{eq.back}
f(\bx^{(r)}+\alpha \bd^{(r)}) > f(\bx^{(r)})+{\frac{1}{2}}\rho(\alpha)
\end{equation}}
\begin{equation}\label{eq.shink}
\alpha\leftarrow \frac{1}{2}\alpha
\end{equation}
\State Compute $\rho(\alpha)$
\EndWhile
\State $\alpha^{(r)}\leftarrow \alpha$
\State   $\bx^{(r+1)}=\bx^{(r)}+\alpha^{(r)}\bd^{(r)}$
\State return ($\bx^{(r+1)}$, $\textsf{flag}=\emptyset$)
\EndIf
\end{algorithmic}
\end{algorithm}
In particular, in this line search procedure, we will first decide a maximum stepsize $\alpha^{(r)}_{\max}>0$. Recall that $\overbar{\bA}'(\bx)$ defined in \eqref{eq:barA} represents the set of constraints that are inactive at point $\bx$.

\begin{lemma}\label{le.alpha}
If there exits an index $i\in \overbar{\cA}(\bx^{(r)})$ so that the following holds 
\begin{equation}
\alpha^{(r)}_i\triangleq \frac{(\overbar{\bb}'-\overbar{\bA}'(\bx^{(r)})\bx^{(r)})_i}{(\overbar{\bA}'(\bx^{(r)})\bd^{(r)})_i}>0,\exists i,\label{eq.closalpha}
\end{equation}
then, we have
\begin{equation}\alpha^{(r)}_{\max}=\left\{\min\{\alpha_i^{(r)}>0\}\mid (\overbar{\bA}'(\bx^{(r)})\bx^{(r)}+\alpha_i^{(r)}\overbar{\bA}'(\bx^{(r)})\bd^{(r)})_i=\overbar{\bb}_i, \forall i\right\}.\label{eq.closealpha}
\end{equation}
\end{lemma}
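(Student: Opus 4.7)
}
The plan is to reduce the definition of $\alpha^{(r)}_{\max}$ in \eqref{eq.alphamax} to a case analysis over each constraint, using the fact that the direction $\bd^{(r)}$ is constructed to lie in the free space $\mathcal{F}(\bx^{(r)})=\mathsf{Null}(\bA'(\bx^{(r)}))$. The argument splits into treating the active and inactive rows of $\bA$ separately and showing that only the inactive rows can become binding along the ray $\bx^{(r)}+\alpha\bd^{(r)}$.

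First, I would record the invariance of the currently active constraints. Since $\bd^{(r)}\in\mathcal{F}(\bx^{(r)})$, for every $j\in\mathcal{A}(\bx^{(r)})$ we have $\bA_j\bd^{(r)}=0$, hence $\bA_j(\bx^{(r)}+\alpha\bd^{(r)})=\bA_j\bx^{(r)}=\bb_j$ for all $\alpha\ge 0$; the active rows therefore impose no upper bound on the step size. Next, for the inactive rows collected in $\overbar{\bA}'(\bx^{(r)})$, feasibility of $\bx^{(r)}+\alpha\bd^{(r)}$ is equivalent to the componentwise inequality
\begin{equation}\label{eq:feas:rewrite}
\alpha\,\bigl(\overbar{\bA}'(\bx^{(r)})\bd^{(r)}\bigr)_i \le \bigl(\overbar{\bb}'-\overbar{\bA}'(\bx^{(r)})\bx^{(r)}\bigr)_i,\qquad \forall i\in\overbar{\mathcal{A}}(\bx^{(r)}),
\end{equation}
where the right-hand side is strictly positive by the definition of $\overbar{\mathcal{A}}(\bx^{(r)})$.

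The second step is a per-index case analysis of \eqref{eq:feas:rewrite}. If $(\overbar{\bA}'(\bx^{(r)})\bd^{(r)})_i\le 0$, the inequality holds for every $\alpha\ge 0$ and places no restriction on the step size. If $(\overbar{\bA}'(\bx^{(r)})\bd^{(r)})_i>0$, then \eqref{eq:feas:rewrite} gives the sharp upper bound $\alpha\le \alpha_i^{(r)}$ with $\alpha_i^{(r)}$ as defined in \eqref{eq.closalpha}, and these are precisely the indices for which $\alpha_i^{(r)}>0$. Hypothesizing that at least one such index exists, taking the intersection of all per-index bounds yields
\begin{equation}
\alpha^{(r)}_{\max}=\min\bigl\{\alpha_i^{(r)}>0\,\bigm|\,i\in\overbar{\mathcal{A}}(\bx^{(r)})\bigr\},
\end{equation}
which matches the formula claimed in \eqref{eq.closealpha}. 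Finally, to recover the equivalent characterization in \eqref{eq.closealpha} in terms of the binding equality, I would observe that at $\alpha=\alpha_{i^\star}^{(r)}$, where $i^\star$ attains the minimum, the affine function $\alpha\mapsto (\overbar{\bA}'(\bx^{(r)})(\bx^{(r)}+\alpha\bd^{(r)}))_{i^\star}$ reaches $\overbar{\bb}_{i^\star}$, giving the stated equality.

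There is no real obstacle here: the argument is essentially the ratio-test used in pivoting for linear programs, the only subtleties being (i) ensuring that the invariance of active constraints follows from $\bd^{(r)}\in\mathcal{F}(\bx^{(r)})$, which is guaranteed by the construction of $\bd^{(r)}$ in Algorithm~\ref{alg:p1}, and (ii) handling the degenerate case in which no positive $\alpha_i^{(r)}$ exists, which is exactly the hypothesis of the lemma, and is otherwise covered by the fallback $\alpha^{(r)}_{\max}=1/L_1$ in line~5 of Algorithm~\ref{alg:p3}.
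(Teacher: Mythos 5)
Your proposal is correct and follows essentially the same route as the paper's proof: rewrite feasibility of $\bx^{(r)}+\alpha\bd^{(r)}$ componentwise over the inactive rows, note the right-hand side is strictly positive, split into the cases $(\overbar{\bA}'(\bx^{(r)})\bd^{(r)})_i\le 0$ (no restriction) and $>0$ (ratio bound), and take the minimum of the positive ratios. Your explicit remark that the active rows impose no bound because $\bd^{(r)}\in\mathsf{Null}(\bA'(\bx^{(r)}))$ is a small completeness improvement the paper leaves implicit, but it does not change the argument.
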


On the other hand, if  the condition \eqref{eq.closalpha} does not hold, it means that along the current direction the problem is effectively {\it unconstrained}.  Therefore, the line search algorithm reduces to the classic unconstrained update. Then by setting $\alpha^{(r)}_{\max}=1/L_1$, SNAP will give a sufficient decrease in this case; see \leref{le.2}.

After choosing $\alpha^{(r)}_{\max}$, we check if  the following holds
\begin{equation}\label{eq:stop}
f(\bx^{(r)}+\alpha^{(r)}_{\max}\bd^{(r)}) <f(\bx^{(r)}).
\end{equation}
If so, then the algorithm {either touches the boundary without increasing the objective, or it has already achieved sufficient descent}.

If \eqref{eq:stop} does not hold, then the algorithm will call the backtracking line search  by successively shrinking the step-size starting at $\alpha\leftarrow\alpha^{(r)}_{\max}$. In particular, if $f(\bx^{(r)}+\alpha \bd^{(r)})>f(\bx^{(r)})+\lambda\rho(\alpha)$ (where $\rho(\alpha)$ is some pre-determined negative quantity, see \eqref{eq:alpha}),  we will implement $\alpha\leftarrow \frac{1}{2}\alpha$ until a sufficient descent is satisfied ({note, such a sufficient descent can be eventually achieved}, see \leref{le.2} and \leref{le.3}).

\noindent{\bf(4) ``Flags'' in the algorithm.} We note that after each NCD step if $\textsf{flag}_{\alpha}=\emptyset$  (i.e., some sufficient descent is achieved), we perform $r_{\textsf{th}}$ iterations of PGD. This design tries to improve the practical efficiency of the algorithm by striking the balance between objective reduction and computational complexity. In practice $r_{\textsf{th}}$ can be chosen as any constant number. When $r_{\textsf{th}}=0$, SNAP becomes simpler and has the same order of convergence rate
as the case where $r_{\textsf{th}}>0$. See Appendix \ref{sec:ssnp}.

\subsection{Theoretical Guarantees}
The convergence analysis of the proposed algorithm is provided in this section. See Appendix \ref{sec:snap:proof}. 


\begin{theorem}\label{th.1}
(Convergence rate of SNAP) Suppose the objective function satisfies assumption~\ref{as1}. There exists a sufficient small $\delta'$ so that the sequence  $\{\bx^{(r)}\}$ generated by \algref{alg:p1} satisfies optimality condition \eqref{eq.cond1}  in the following number of iterations with probability at least $1-\delta'$:
\begin{equation}\label{eq.convrate}
\widetilde{\mathcal{O}}\left(\max\left\{\frac{L_1\min\{d,m\}}{\epsilon^2_G},\frac{L^2_2\max\{r_{\textsf{th}},\min\{d,m\}\}}{\epsilon^3_H}\right\}(f(\bx^{(1)})-f^{\star})\right)
\end{equation}
where the randomness comes from the oracle  \textsf{\it Negative-Eigen-Pair}, $f^{\star}\bydef \min_{\bx\in\mathcal{X}} f(\bx)$ denotes the global minimum value, and $\widetilde{\mathcal{O}}$ hides the number of iterations run by an oracle \textsf{\it Negative-Eigen-Pair}.
\end{theorem}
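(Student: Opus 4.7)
The plan is to partition the iterations of \algref{alg:p1} into three groups according to which branch of the algorithm executes at step~$r$, prove a per-iteration descent bound for two of the groups, use a dimension-counting argument for the third, and then invert the objective budget $f(\bx^{(1)})-f^{\star}$ to arrive at the claimed iteration complexity. Termination at line~18 happens only when $\|g_\pi(\bx^{(r)})\|\le\epsilon_G$ \emph{and} \textsf{Negative-Eigen-Pair} returns $\emptyset$, which by the oracle's specification certifies \eqref{eq.cond12}, so the output is automatically an $(\epsilon_G,\epsilon_H)$-SOSP1 as soon as the algorithm halts.

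For a pure PGD iteration (line~21) at which $\|g_\pi(\bx^{(r)})\|>\epsilon_G$, the standard projected-gradient analysis with stepsize $\alpha_\pi=1/L_1$ under \asref{as1} yields $f(\bx^{(r)})-f(\bx^{(r+1)})\ge \epsilon_G^2/(2L_1)$. For an NCD iteration with $\textsf{flag}_\alpha=\emptyset$, i.e., one whose backtracking loop in \algref{alg:p3} terminates via \eqref{eq.back}, I would combine the third-order Taylor bound \eqref{eq.des2} with the negative-eigenvalue certificate $\bv(\bx^{(r)})^{\T}\nabla^2 f(\bx^{(r)})\bv(\bx^{(r)})\le-\epsilon'_H(\delta)$ and the calibrated target $\rho(\alpha)=-\alpha^2\epsilon'_H(\delta)/4$ of \eqref{eq:alpha} to show that \eqref{eq.back} must be satisfied for every $\alpha\le c\,\epsilon'_H(\delta)/L_2$ with an explicit constant $c$. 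Since the halving rule \eqref{eq.shink} loses at most a factor of two, the accepted $\alpha^{(r)}=\Omega(\epsilon'_H(\delta)/L_2)$, which forces descent $\Omega(\epsilon_H^3/L_2^2)$ when $\bd^{(r)}=\bv(\bx^{(r)})$; by \leref{le.selection} together with \eqref{eq.des1} and \eqref{eq.defofrho}, the alternative choice $\bd^{(r)}=-q_\pi(\bx^{(r)})$ yields per-step descent that is no smaller.

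For an NCD iteration with $\textsf{flag}_\alpha=\Diamond$ (the line search accepts the initial trial $\alpha^{(r)}_{\max}$), either $\alpha^{(r)}_{\max}=1/L_1$ (no constraint is hit along $\bd^{(r)}$), in which case $L_1$-smoothness already produces a quantitative descent of the same order as above, or $\alpha^{(r)}_{\max}$ is the distance to a new boundary so that $|\cA(\bx^{(r+1)})|>|\cA(\bx^{(r)})|$. Because $\bd^{(r)}\in\mathcal{F}(\bx^{(r)})$ preserves every currently active row, the active set grows monotonically along consecutive $\textsf{flag}_\alpha=\Diamond$ iterations, so at most $\min\{d,m\}$ such iterations can occur before $\mathcal{F}(\bx^{(r)})=\{0\}$ forces trivial termination or the next call to \textsf{Negative-Eigen-Pair} returns $\emptyset$. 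Letting $N_P,\,N_\emptyset,\,N_\Diamond$ count the three groups and noting that the flag logic in lines~3 and 14--16 forces at least $r_{\textsf{th}}$ PGD steps between consecutive sufficient-descent NCD calls, the descent budget gives
\begin{equation}
f(\bx^{(1)})-f^{\star}\ge N_P\cdot\tfrac{\epsilon_G^2}{2L_1}+N_\emptyset\cdot\Omega\!\left(\tfrac{\epsilon_H^3}{L_2^2}\right),\qquad N_\Diamond\le (N_\emptyset+1)\min\{d,m\},\qquad N_P\ge r_{\textsf{th}}\,N_\emptyset,
\end{equation}
and combining these linearly yields the bound \eqref{eq.convrate}. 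Finally, each call to \textsf{Negative-Eigen-Pair} succeeds with probability $\ge 1-\delta$; setting $\delta=\delta'/T$ with $T$ equal to the deterministic bound and union-bounding over the at most $T$ invocations yields the overall success probability $\ge 1-\delta'$, with the $\log(1/\delta)$ cost inside the oracle absorbed in $\widetilde{\mathcal{O}}(\cdot)$.

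The main obstacle is the stepsize lower bound in the negative-curvature case: one must carefully use the $L_2$-Hessian Lipschitz bound together with the calibrated $\rho(\alpha)$ to prove that \eqref{eq.back} is eventually satisfied at $\alpha=\Theta(\epsilon'_H(\delta)/L_2)$, while simultaneously ensuring that a premature line-search exit due to a boundary hit is routed to the dimension-counting argument. This coordination is exactly why the algorithm must distinguish $\textsf{flag}_\alpha=\Diamond$ from $\textsf{flag}_\alpha=\emptyset$; without such a split one cannot simultaneously maintain a meaningful per-step descent for the negative-curvature branch and accommodate forced moves to the boundary.
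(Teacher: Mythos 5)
Your proof follows the same skeleton as the paper's: a descent lemma for large-gradient PGD steps, a descent lemma of order $\epsilon'^3_H(\delta)/L_2^2$ for NCD steps whose backtracking terminates (the paper's \leref{le.2}--\leref{le.3}), a dimension-counting argument for boundary-accepting steps (\leref{le.4}), and a union bound over the oracle calls. The genuine gap is in your final accounting. Your budget inequality credits \emph{every} PGD iteration with descent $\epsilon_G^2/(2L_1)$, but by line 3 of \algref{alg:p1} the algorithm also runs PGD during the $r_{\textsf{th}}$-iteration cooldown after a $\textsf{flag}_{\alpha}=\emptyset$ NCD step, i.e., while $\|g_\pi(\bx^{(r)})\|\le\epsilon_G$; for those forced iterations the only guarantee is that the objective does not increase, so the inequality $f(\bx^{(1)})-f^{\star}\ge N_P\,\epsilon_G^2/(2L_1)+N_\emptyset\cdot\Omega(\epsilon_H^3/L_2^2)$ is false as stated. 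This is exactly where the factor $\max\{r_{\textsf{th}},\min\{d,m\}\}$ in \eqref{eq.convrate} originates: in the paper's proof each sufficient-descent NCD step is charged for the entire ensuing window of $r_{\textsf{th}}$ unquantified iterations (descent $\Delta=0.06\,\epsilon'^3_H(\delta)/L_2^2$ over $r_{\textsf{th}}$ iterations, see \eqref{eq.sufd:3}), and the telescoping is then performed over blocks of length $2\min\{d,m\}\,r_{\textsf{th}}$. Your inequality $N_P\ge r_{\textsf{th}}N_\emptyset$ points in the unusable direction: a lower bound on $N_P$ cannot control the total iteration count, and combined with the false budget inequality your ``combining these linearly'' step would actually yield a bound with no $r_{\textsf{th}}$ dependence, not \eqref{eq.convrate}. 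The repair is to split the PGD count into large-gradient steps, which enter the budget with $\epsilon_G^2/(18L_1)$ each, and forced cooldown steps, whose number is bounded above by $r_{\textsf{th}}N_\emptyset$; with that split the argument closes and coincides with the paper's.

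Two smaller slips. First, $N_\Diamond\le(N_\emptyset+1)\min\{d,m\}$ is not justified: blocks of boundary-accepting steps can also be interrupted by PGD steps (after a boundary touch $\|g_\pi(\bx^{(r)})\|$ may exceed $\epsilon_G$), so what \leref{le.4} gives is $N_\Diamond\le(N_P+N_\emptyset+1)\min\{d,m\}$, which is how the paper invokes it in \eqref{eq.descdm2}; this is harmless for the final rate since \eqref{eq.convrate} already carries $\min\{d,m\}$ on the $L_1/\epsilon_G^2$ term, but it must be stated that way. Second, in the $\textsf{flag}_{\alpha}=\Diamond$ case with $\alpha^{(r)}_{\max}=1/L_1$, your claim of quantified descent from $L_1$-smoothness is correct when $\bd^{(r)}=-q_{\pi}(\bx^{(r)})$ (the selection rule in line 8 of \algref{alg:p1} lower-bounds $\|q_{\pi}(\bx^{(r)})\|^2$ by a constant times $L_1\epsilon'^3_H(\delta)/L_2^2$), but not when $\bd^{(r)}=\bv(\bx^{(r)})$: acceptance in line 8 of \algref{alg:p3} only certifies strict decrease, and the cubic bound \eqref{eq.des2} at $\alpha=1/L_1$ does not produce a descent of order $\epsilon_H^3/L_2^2$ unless $\epsilon'_H(\delta)$ is comparable to $L_2/L_1$. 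That sub-case should therefore not be folded into the quantified-descent group; it has to be treated as an unquantified, objective-non-increasing step, which is how the paper handles all $\textsf{flag}_{\alpha}=\Diamond$ iterations.
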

\remark The convergence rate of SNAP has the same order in $\epsilon_G,\epsilon_H,L_1,L_2$, compared with those proposed in \cite{aras18} and \cite{mei18} (which compute $(\epsilon_G, \epsilon_H)$-SOSP2s). However, it is important to note that the per-iteration complexity of SNAP is polynomial in both problem dimension and in number of constraints, while algorithms proposed in \cite{aras18} and \cite{mei18} have  exponential per-iteration complexity.

\remark In particulay, SNAP needs $\widetilde{\mathcal{O}}(\min\{d,m\}/\epsilon^2)$ number of iterations to achieve an $(\epsilon,\sqrt{\epsilon})$-SOSP1 by just substituting $\epsilon_G=\epsilon$, $\epsilon_H=\sqrt{L_2\epsilon}$ and $r_{\textsf{th}}\sim\mathcal{O}(L_1/\sqrt{L_2\epsilon})$.

\section{First-order Successive Negative-curvature Gradient Projection (SNAP$^+$)}
{In this section, we propose a {\bf f}irst-order algorithm for SNAP, i.e., SNAP$^+$, featuring a subspace perturbed gradient descent (SP-GD) procedure that can extract the negative curvature in a subspace. Our work is motivated by recent works, which show that occasionally adding random noise to the iterates of GD can help escape from saddle points efficiently \cite{jin2017jordan,ge2015escaping}. The benefit of the proposed SNAP$^{+}$ is that its complexity can be improved significantly since the procedure of finding the negative eigenpair is implemented by a first-order method.}

In particular, the key idea of these perturbation schemes is to use the difference of the gradient successively \cite{xu2017first}, given below, to approximate the Hessian-vector product
\begin{equation}\label{eq.spgd}
\bz^{(\tau+1)}=\bz^{(\tau)}-\beta(q_{\pi}(\bx^{(r)}+\bz^{(\tau)})-q_{\pi}(\bx^{(r)})), \quad \textrm{for }\tau = 1,\ldots,T.
\end{equation}
Here $T$ is some properly selected constant, $\beta\le 1/L_1$ is the step-size and the algorithm is initialized from a random vector $\bz^{(1)}\in\mathcal{F}(\bx^{(r)})$ drawn from a uniform distribution in the interval $[0,\sR]$, where $\sR$ is some constant. This process can be viewed as performing power iteration around the strict saddle point. The details of the algorithm is presented in \algref{alg:p5}, and its convergence is as the following.
\begin{theorem}\label{th:spgd1}
SP-GD is called with the step-size $\beta\le 1/L_1$,
\begin{equation}
T\ge \frac{\widehat{c}\log(\frac{dL_1}{\epsilon_H\delta})}{\beta\epsilon_H}+1,\quad \sF=\frac{\epsilon_H^3}{L^2_2\widehat{c}^5\log^3(\frac{dL_1}{\epsilon_H\delta})},\quad\sR=\frac{\epsilon_H^2}{L_1L_2\widehat{c}^4\log^2(\frac{dL_1}{\epsilon_H\delta})}
\end{equation}
where $\widehat{c}\ge 51$. Then, for any $0<\delta<1$, $\epsilon_H\le L_1$,  SP-GD returns $\Diamond$ and a vector $\bz$ such that
\begin{equation}
\frac{\bz^{\T}\nabla^2 f(\bx)\bz}{\|\bz\|^2}\le -\frac{\epsilon_H}{8\widehat{c}\log(\frac{dL_1}{\epsilon_H\delta})}
\end{equation}
with probability $1-\delta$. Otherwise SP-GD returns $\emptyset$ and vector 0, indicating that $\lambda_{\min}(\bH_{\bP}(\bx))\ge-\epsilon_H$ with probability $1-\delta$.
\end{theorem}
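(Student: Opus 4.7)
The plan is to recognize iteration \eqref{eq.spgd} as a perturbed power iteration on the matrix $\bM := \bI - \beta\,\bH_{\bP}(\bx)$ restricted to the free subspace $\mathcal{F}(\bx)$, and to apply the now-standard ``saddle-escape-via-perturbation'' argument of \cite{jin2017jordan,xu2017first} inside this subspace. So long as the iterate is small enough that the active set at $\bx+\bz^{(\tau)}$ coincides with that at $\bx$, we have $\bP(\bx+\bz^{(\tau)})=\bP(\bx)=:\bP$, and since $\bz^{(\tau)}\in\mathcal{F}(\bx)$ implies $\bP\bz^{(\tau)}=\bz^{(\tau)}$, a Taylor expansion combined with the $L_2$-Hessian Lipschitz assumption yields
\begin{equation*}
q_{\pi}(\bx+\bz^{(\tau)})-q_{\pi}(\bx) = \bH_{\bP}(\bx)\,\bz^{(\tau)} + \bP\bseta^{(\tau)}, \qquad \|\bseta^{(\tau)}\|\le \tfrac{L_2}{2}\|\bz^{(\tau)}\|^2.
\end{equation*}
Plugging into \eqref{eq.spgd} gives the perturbed power iteration $\bz^{(\tau+1)} = \bM\,\bz^{(\tau)} - \beta\,\bP\bseta^{(\tau)}$. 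If $\lambda_{\min}(\bH_{\bP}(\bx))<-\epsilon_H$, then $\bM$ restricted to $\mathcal{F}(\bx)$ has a top eigenvalue $\ge 1+\beta\epsilon_H$, so the noise-free iteration grows geometrically along the eigenvector associated with the most negative eigenvalue of $\bH_{\bP}(\bx)$.

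I would then quantify this growth in the presence of the remainder. Decompose $\bz^{(\tau)}=\bz_{\sS}^{(\tau)}+\bz_{\overline{\sS}}^{(\tau)}$, where $\sS\subset\mathcal{F}(\bx)$ is spanned by eigenvectors of $\bH_{\bP}(\bx)$ whose eigenvalues lie below $-\epsilon_H/(8\widehat{c}\log(dL_1/(\epsilon_H\delta)))$. A standard anti-concentration argument applied to the uniform initialization in the box of side $\sR$, projected onto $\mathcal{F}(\bx)$, gives $\|\bz_{\sS}^{(1)}\|\ge \Omega(\sR\delta/\sqrt{d})$ with probability $\ge 1-\delta$. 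I would then induct on $\tau$: so long as $\|\bz^{(\tau)}\|\le \sF$, the remainder is negligible, the component on $\sS$ multiplies by at least $1+\beta\epsilon_H$ per step (driven by the most negative eigenvalue), while the component on $\overline{\sS}$ multiplies by at most $1+\beta\epsilon_H/(8\widehat{c}\log(\cdot))$, so the signal-to-noise ratio grows by a factor $\ge 1+\beta\epsilon_H/2$ per step. Choosing $T\ge \widehat{c}\log(dL_1/(\epsilon_H\delta))/(\beta\epsilon_H)$ produces an $e^{\Omega(\log(d/\delta))}$ blow-up, sufficient both for $\|\bz^{(T)}\|$ to cross the threshold $\sF$ and for $\bz^{(T)}_{\sS}$ to dominate; the Rayleigh quotient then satisfies $(\bz^{(T)})^{\T}\nabla^2 f(\bx)\bz^{(T)}/\|\bz^{(T)}\|^2\le -\epsilon_H/(8\widehat{c}\log(dL_1/(\epsilon_H\delta)))$. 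Conversely, when $\lambda_{\min}(\bH_{\bP}(\bx))\ge-\epsilon_H$ every eigenvalue of $\bM$ restricted to $\mathcal{F}(\bx)$ is at most $1+\beta\epsilon_H$, and a similar but cleaner induction shows that $\|\bz^{(\tau)}\|$ never exceeds $\sF$ within $T$ steps, so SP-GD correctly returns $\emptyset$.

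The main technical hurdle is the bookkeeping of the Hessian-Lipschitz error across $T=\widetilde{\mathcal{O}}(1/\epsilon_H)$ steps: the accumulated term $\beta\bP\bseta^{(\tau)}$ must remain well below the geometrically growing signal on $\sS$. This dictates the precise parameter scaling: $\sR=\epsilon_H^2/(L_1L_2\widehat{c}^4\log^2(\cdot))$ is chosen so the initial signal $\bz_{\sS}^{(1)}$ survives with probability $1-\delta$; $\sF=\epsilon_H^3/(L_2^2\widehat{c}^5\log^3(\cdot))$ is small enough that $\beta L_2\sF\ll \beta\epsilon_H/\log(\cdot)$ yet large enough that the amplified signal crosses it by time $T$; and the constant $\widehat{c}\ge 51$ absorbs the remaining logarithmic factors. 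A secondary technicality is verifying the standing assumption $\bP(\bx+\bz^{(\tau)})=\bP(\bx)$ throughout the inner loop, which follows because both $\sR$ and $\sF$ can be chosen smaller than $\min_{j\notin\mathcal{A}(\bx)}(b_j-\bA_j\bx)/\|\bA_j\|$, i.e.\ the distance from $\bx$ to any currently-inactive hyperplane. Combining the signal-growth bound on $\sS$, the anti-concentration bound on $\bz_{\sS}^{(1)}$, and the Rayleigh-quotient estimate via the $\sS/\overline{\sS}$ decomposition then yields Theorem~\ref{th:spgd1}.
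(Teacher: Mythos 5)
Your strategy---reading \eqref{eq.spgd} as a perturbed power iteration on $\bI-\beta\bH_{\bP}(\bx)$ inside $\mathcal{F}(\bx)$, lower-bounding the initial component along the escape direction by anti-concentration, and running a signal-versus-noise induction on an eigen-decomposition $\sS/\overline{\sS}$---is a genuinely different route from the paper's. The paper never analyzes a single trajectory directly: it couples two sequences whose initializations differ by $\upsilon\sR\vec{\be}$ along the negative-curvature direction (\leref{le.layer32}), shows the \emph{difference} vector obeys the perturbed power iteration, concludes that at least one of the two trajectories must decrease the proxy objective $\hf$ by $2\sF$ within $\widehat{c}\sT$ steps, and then bounds the volume of the ``stuck'' set of initializations to obtain the $1-\delta$ statement; the Rayleigh-quotient guarantee for the returned vector is obtained separately and essentially deterministically, from the acceptance test $f(\bx+\bz^{(T)})-f(\bx)-q_{\pi}(\bx)^{\T}\bz^{(T)}\le-1.5\sF$ combined with $\|\bz^{(T)}\|\le 4\sS$ and the cubic Taylor bound (Step 4 of the paper's proof).

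As written, your proposal has genuine gaps. (i) The event that decides whether SP-GD returns $\Diamond$ or $\emptyset$ is the function-value test in line 6 of \algref{alg:p5}; $\sF$ has units of objective value, yet your induction treats it as a norm threshold, and you never connect the geometric growth of $\|\bz^{(\tau)}_{\sS}\|$ to the event that this test passes---so the theorem's dichotomy is not actually established. (ii) Your dominance induction is valid only while the Taylor remainder $\tfrac{L_2}{2}\|\bz^{(\tau)}\|^2$ is negligible, i.e.\ while $\|\bz^{(\tau)}\|\lesssim\sS$; SP-GD runs to a fixed horizon $T\approx\widehat{c}\sT$, and after escape the iterate leaves this regime, so the claimed Rayleigh-quotient bound for $\bz^{(T)}$ at time exactly $T$ does not follow. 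The paper's route avoids both issues because SP-GD monotonically decreases $\hf$ (so a $2\sF$ drop persists to time $T$) and the drop plus the norm bound converts into the curvature bound; you would need an analogue of \leref{le.layer31} (iterates stay within $3\sS$ as long as $\hf$ has not dropped by $2\sF$) rather than assuming boundedness. (iii) The claim that every component in $\sS$ grows by $1+\beta\epsilon_H$ per step is false---only components along eigenvalues $\le-\epsilon_H$ do; this is repairable by tracking the most negative eigendirection, but as stated it is wrong. Finally, your device for keeping $\bP(\bx+\bz^{(\tau)})=\bP(\bx)$ by choosing $\sR,\sF$ smaller than the distance to the nearest inactive hyperplane is not available: those parameters are fixed by the theorem independently of that (possibly arbitrarily small) distance.
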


\begin{algorithm}[t]
\caption{Subspace Perturbed Gradient Descent (SP-GD)}
\label{alg:p5}
\begin{algorithmic}[1]
\State {\bfseries Input:} $\bx^{(r)},T,q_{\pi},\sF,\sR, \beta=1/L_1,d,\delta,\widehat{c},\epsilon_H$
\State Generate vector $\bz$ randomly from the sphere of an Euclidean ball of radius $\sR$ in $\mathcal{F}(\bx^{(r)})$.
\For {$\tau=1,\ldots,T$}
\State
\begin{equation}
\bz^{(\tau+1)}=\bz^{(\tau)}-\beta(q_{\pi}(\bx^{(r)}+\bz^{(\tau)})-q_{\pi}(\bx^{(r)}))
\end{equation}
\EndFor
\If {$f(\bx^{(r)}+\bz^{(T)})-f(\bx^{(r)})-q_{\pi}(\bx^{(r)})^{\T}\bz^{(T)}\le-1.5\sF$}
\State return $[\Diamond,\bz^{(T)}/\|\bz^{(T)}\|,-\frac{\epsilon_H}{4\widehat{c}\log(\frac{dL_1}{\epsilon_H\delta})}]$
\Else
\State return $[\emptyset,0,0]$
\EndIf
\end{algorithmic}
\end{algorithm}

{Essentially, when SP-GD stops, it produces a direction $\bz$ that satisfies the requirements of the outputs for the {\it Negative-Eigen-Pair} oracle in Algorithm 1. It follows that the rate claimed in Theorem \ref{th.1} still holds for SNAP$^+$. 
The proof can be found in Appendix \ref{sec:th2}}

\begin{corollary}\label{co:th2}
(Convergence rate of SNAP$^+$) Suppose Assumption~\ref{as1} is satisfied and  SP-GD with step-size less than $1/L_1$ is used to find the negative eigen-pair. Then, there exists a sufficiently small $\delta'$ such that the sequence  $\{\bx^{(r)}\}$ generated by \algref{alg:p1} finds an $(\epsilon,\sqrt{\epsilon})$-SOSP1 in the following number of iterations with probability at least $1-\delta'$ (where $\widehat{\mathcal{O}}$ hides the polynomial in terms of $d,1/\epsilon$)
{\small\begin{equation}\label{eq.convrate2}
\widehat{\mathcal{O}}\left(\max\left\{\frac{L^2_1}{L_2^{1/2}\epsilon^{2.5}},\frac{L_1}{\epsilon^2}\right\}\min\{d,m\}(f(\bx^{(1)})-f^{\star})\right).
\end{equation}}
\end{corollary}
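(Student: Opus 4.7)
The plan is to derive Corollary \ref{co:th2} as a direct specialization of Theorem \ref{th.1} once SP-GD (\algref{alg:p5}) is shown to implement the \textsf{Negative-Eigen-Pair} oracle that SNAP calls on line 4 of \algref{alg:p1}. Two pieces must be assembled: first, verify that SP-GD satisfies the oracle's two output specifications, and second, plug the appropriate choices of $(\epsilon_G, \epsilon_H, r_{\textsf{th}})$ into the rate of Theorem \ref{th.1} and account for the per-call cost of SP-GD.

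For the first piece, the unit-norm-in-$\mathcal{F}(\bx^{(r)})$ requirement is structural: the initial perturbation $\bz^{(1)}$ is drawn from inside $\mathcal{F}(\bx^{(r)})$, and the SP-GD update preserves this subspace because $q_\pi = \bP(\bx^{(r)}) \nabla f(\cdot)$ always outputs a vector in the free space; normalization is performed at the return step. The curvature requirement is precisely the content of Theorem \ref{th:spgd1}, which guarantees (with probability at least $1-\delta$) that the returned $\bz$ either has Rayleigh quotient at most $-\epsilon_H / \Theta(\log(dL_1/(\epsilon_H\delta)))$ or certifies $\lambda_{\min}(\bH_{\bP}(\bx^{(r)})) \ge -\epsilon_H$. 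Thus SP-GD is a legitimate oracle with $\epsilon'_H(\delta)$ differing from $\epsilon_H$ by a logarithmic factor (absorbed into the oracle's constant $\gamma$), so the full machinery of Theorem \ref{th.1} carries over to SNAP$^+$.

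For the second piece, I substitute $\epsilon_G = \epsilon$, $\epsilon_H = \sqrt{L_2 \epsilon}$, and $r_{\textsf{th}} = \mathcal{O}(L_1/\sqrt{L_2\epsilon})$ into the bound of Theorem \ref{th.1}, which yields an outer-iteration count of order $\widetilde{\mathcal{O}}(\max\{L_1/\epsilon^2,\ L_2^{1/2}/\epsilon^{3/2}\} \min\{d,m\})$. Each NCD call, however, is powered by SP-GD whose own length is $T = \widetilde{\mathcal{O}}(L_1/\sqrt{L_2\epsilon})$ gradient steps. Since in the worst case every outer iteration may trigger a fresh SP-GD run, the total gradient-evaluation count is bounded by the product of the outer count and $T$. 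Carrying this multiplication through reproduces exactly the two terms $L_1^2 \min\{d,m\}/(L_2^{1/2}\epsilon^{2.5})$ and $L_1 \min\{d,m\}/\epsilon^2$ inside the $\max$ in \eqref{eq.convrate2}, with all logarithmic dependence on $d$ and $1/\epsilon$ absorbed into $\widehat{\mathcal{O}}$.

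The main obstacle I expect is the probabilistic bookkeeping. Theorem \ref{th:spgd1} offers only a per-call success probability of $1-\delta$, yet SNAP$^+$ can invoke SP-GD as many as $\widetilde{\mathcal{O}}(L_2^{1/2} \min\{d,m\}/\epsilon^{3/2})$ times over its entire execution. The standard remedy is a union bound: replace $\delta$ inside each SP-GD invocation by $\delta'/N_{\textsf{NCD}}$, where $N_{\textsf{NCD}}$ is the a priori upper bound above, so that the total failure probability stays bounded by $\delta'$; this only inflates $T$ by a further $\log$ factor, again absorbed into $\widehat{\mathcal{O}}$. A subsidiary care-point is to confirm that an $\emptyset$ return from SP-GD is a safe certificate of the approximate SOSP1 second-order condition, but this is immediate from the alternative clause of Theorem \ref{th:spgd1}. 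Once these pieces are combined, \eqref{eq.convrate2} follows directly.
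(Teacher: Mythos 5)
Your proposal is correct and follows essentially the same route as the paper: it treats SP-GD (via Theorem~\ref{th:spgd1}, with the logarithmic loss absorbed into the oracle constant $\gamma$) as the \textsf{\it Negative-Eigen-Pair} oracle, reuses the descent/telescoping machinery of Theorem~\ref{th.1} with $\epsilon_G=\epsilon$, $\epsilon_H=\sqrt{L_2\epsilon}$, $r_{\textsf{th}}\sim L_1/\sqrt{L_2\epsilon}$, charges each negative-curvature extraction its inner cost $T_{\textsf{SP-GD}}\sim L_1/\sqrt{L_2\epsilon}$, and controls the overall failure probability by a union bound over oracle calls. The only cosmetic difference is bookkeeping: the paper divides the per-block descent of the NCD-related cases by $T_{\textsf{SP-GD}}$ (leaving pure PGD blocks unchanged), whereas you multiply the outer count by $T$; both yield the two terms in \eqref{eq.convrate2} at the same order.
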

{Note that the total rate includes the number of iteration required by SP-GD, so it is $\mathcal{O}({1/\epsilon^{0.5}})$ slower than the rate of SNAP.}

\section{Connection with Existing Work and Future Work}

\begin{table*}[b]
\caption{Convergence rates of algorithms to SOSPs, where ESP denotes the escape saddle point algorithm proposed in  \cite{aras18}, SO-LC-Trace denotes the second-order-linear constrained-TRACE algorithm proposed in \cite{nouiehed2019trust}, H-V denotes Hessian-vector, P-I denotes per-iteration,  $\exp(\cdot)$ stands for exponential, and $\widetilde{\mathcal{O}}(\cdot)$ hides per-iteration complexity.}
\label{tab.com}
\vskip 0.15in
\begin{center}
\begin{small}
\begin{sc}
\begin{tabular}{lllll}
\toprule
Algorithm & Complexity P-I & Iterations  & $(\epsilon_G,\epsilon_H)$-SOSP & Oracle\\
\midrule
ESP \cite{aras18}  & $\mathcal{O}(\exp(m))$  & $\widetilde{\mathcal{O}}(\max\{\epsilon^{-2}_G,\epsilon^{-3}_H\})$ & $(\epsilon_G,\epsilon_H)$-SOSP2 & Hessian \\
SO-LC-TRACE \cite{nouiehed2019trust} & $\mathcal{O}(\exp(m))$ & $\widetilde{\mathcal{O}}(\max\{\epsilon^{-1.5}_G,\epsilon^{-3}_H\})$& $(\epsilon_G,\epsilon_H)$-SOSP2  &H-V product \\
{\bf SNAP} (This work) & $\mathcal{O}(\textrm{poly}(d,m))$  & $\widetilde{\mathcal{O}}(\max\{\epsilon^{-2}_G,\epsilon^{-3}_H\})$  & $(\epsilon_G,\epsilon_H)$-SOSP1 & H-V product \\
{\bf SNAP$^+$} (This work) & $\mathcal{O}(\textrm{poly}(d,m))$  & $\widehat{\mathcal{O}}(1/\epsilon^{2.5})$  & $(\epsilon,\epsilon^{\frac{1}{2}})$-SOSP1 & Gradient\\
\bottomrule
\end{tabular}
\end{sc}
\end{small}
\end{center}
\vskip -0.1in
\end{table*}

In Table \ref{tab.com}, we provide comparisons of key features of a few existing algorithms for finding SOSPs for problem linearly constrained non-convex problems.

{1), SNAP and SNAP$^+$ has a polynomial complexity per-iteration, while the existing works rely on an exponential complexity of some subroutine for solving the inner loop optimization problems.

2), SNAP has the same convergence rate as the existing work in \cite{aras18}. If the acceleration technique is adopted, SNAP can also have a faster convergence.

3), To the best of our knowledge, SNAP$^+$ is the first first-order algorithm that has the provable convergence rate to SOSPs, where the rate is only in the order of polynomial in terms of problem dimension and the total number of constraints.}

\remark \leref{le:equiv}, \prref{pr:00}, \coref{co:0eh} have stated clearly the equivalence between SOSP1 and SOSP2. We leave the equivalence between $(\epsilon_G,\epsilon_H)$-SOSP1 and $(\widetilde{\epsilon}_G,\epsilon_H)$-SOSP2 as the future work.

\section{Numerical Results}
In this section, we showcase the numerical advantages of SNAP and SNAP$^+$, compared with PGD and PGD with line search (PGD-LS) for multiple machine learning problems, such as NMF, training nonnegative neural networks, penalized NMF, etc. 
\subsection{NMF Problems}
First, we consider the following NMF problem, which is
\begin{subequations}
\begin{align}
\min_{\bW\in\mathbb{R}^{n\times k},\bH\in\mathbb{R}^{m\times k}} &\|\bW\bH^{\T}-\bM\|^2
\\
\textrm{s.t.}\quad&\bW\ge0,\bH\ge0.
\end{align}
\end{subequations}
The starting point for all the algorithms is $\bX^{(1)}=c\pi_{\mathcal{A}}([\bW^{(1)};\bH^{(1)}])$, where $\bW^{(1)}$ and $\bH^{(1)}$ are randomly generated. Constant $c$ controls the distance between the initialization point and the origin. The cases of $c=1$ and $c=10^{-10}$ correspond to large and small initialization, respectively. The rationale for considering a small initialization is that for NMF problems, it can be easily checked that $(\bW={\bf 0}, \bH={\bf 0})$ is a saddle point. By initializing around this point, we aim at examining whether indeed the proposed SNAP and SNAP$^+$ are able to escape from this region.

\begin{figure}[ht]
\centering
\subfigure[Loss value versus iteration]{
\label{fig:lvi1}
\includegraphics[width=.4\linewidth]{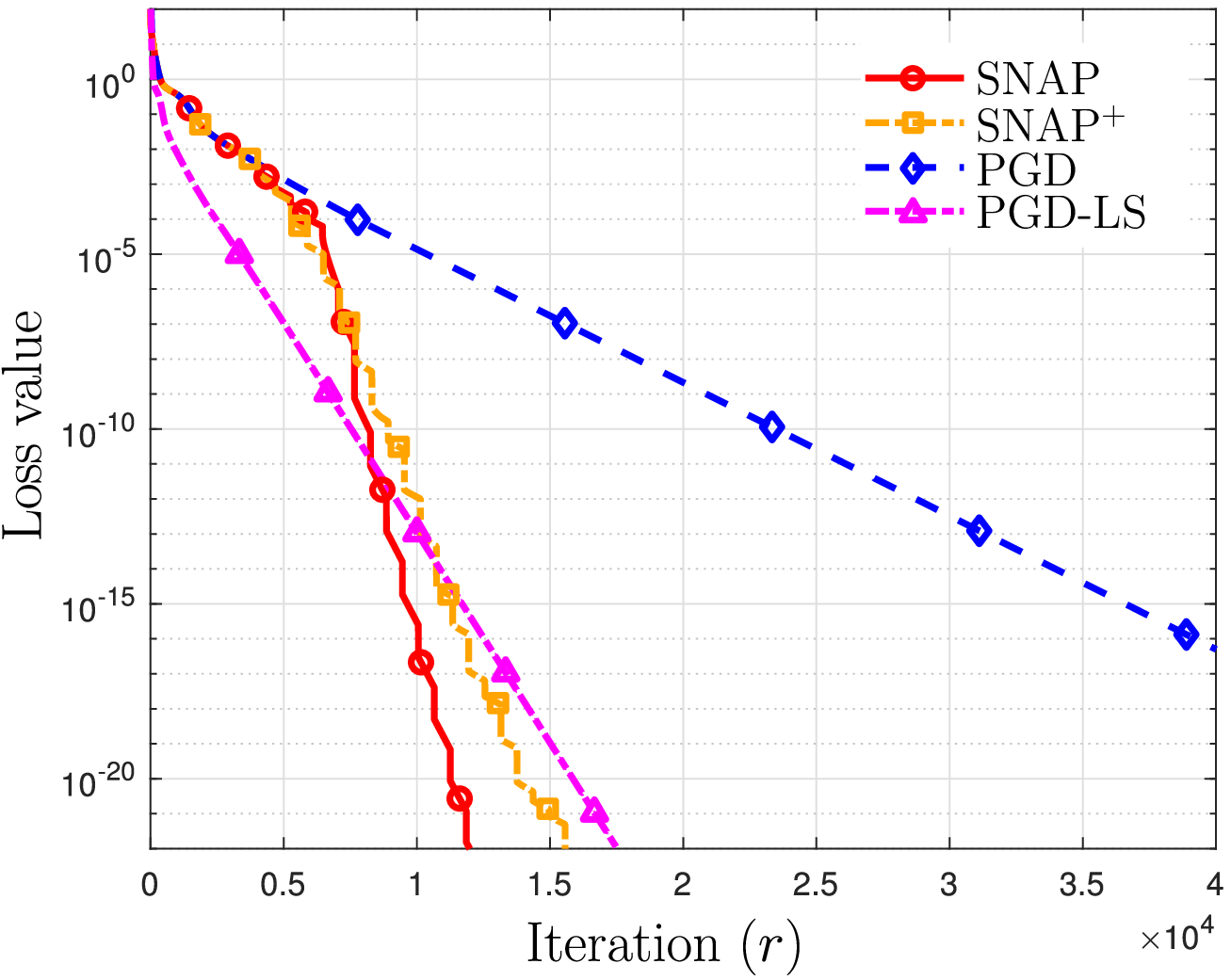}}
\hspace{0.4in}
\subfigure[{Loss value versus computational time}]{
\label{fig:lvt1}
\includegraphics[width=.4\linewidth]{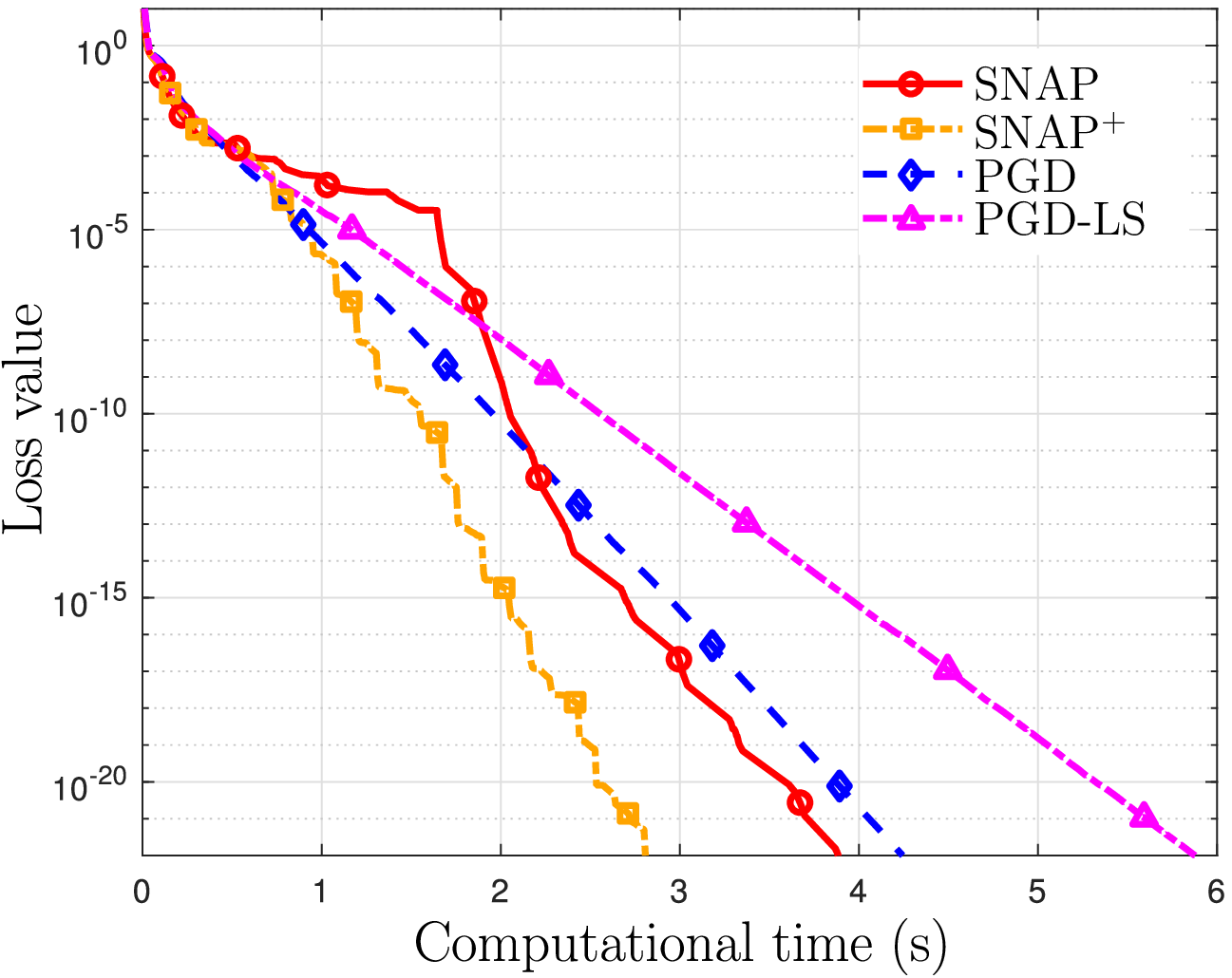}}
\caption{The convergence behaviors of SNAP, SNAP$^+$, PGD, PGD-LS for NMF, where $c=1$.}
\label{fig:nmf1}
\end{figure}
\begin{figure}[ht]
\centering
\subfigure[Loss value versus iteration]{
\label{fig:lvi2}
\includegraphics[width=.4\linewidth]{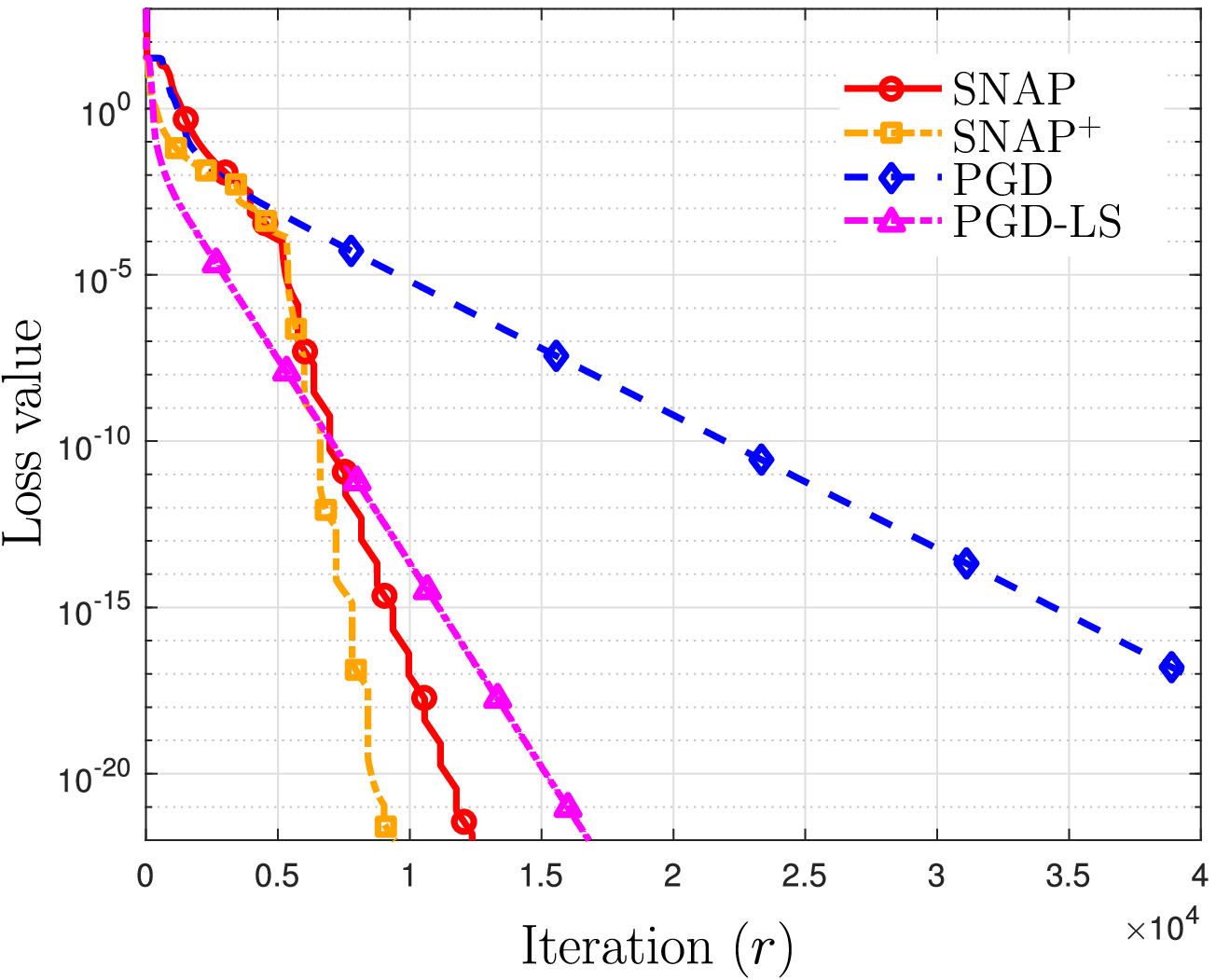}}
\hspace{0.4in}
\subfigure[{Loss value versus computational time}]{
\label{fig:lvt2}
\includegraphics[width=.4\linewidth]{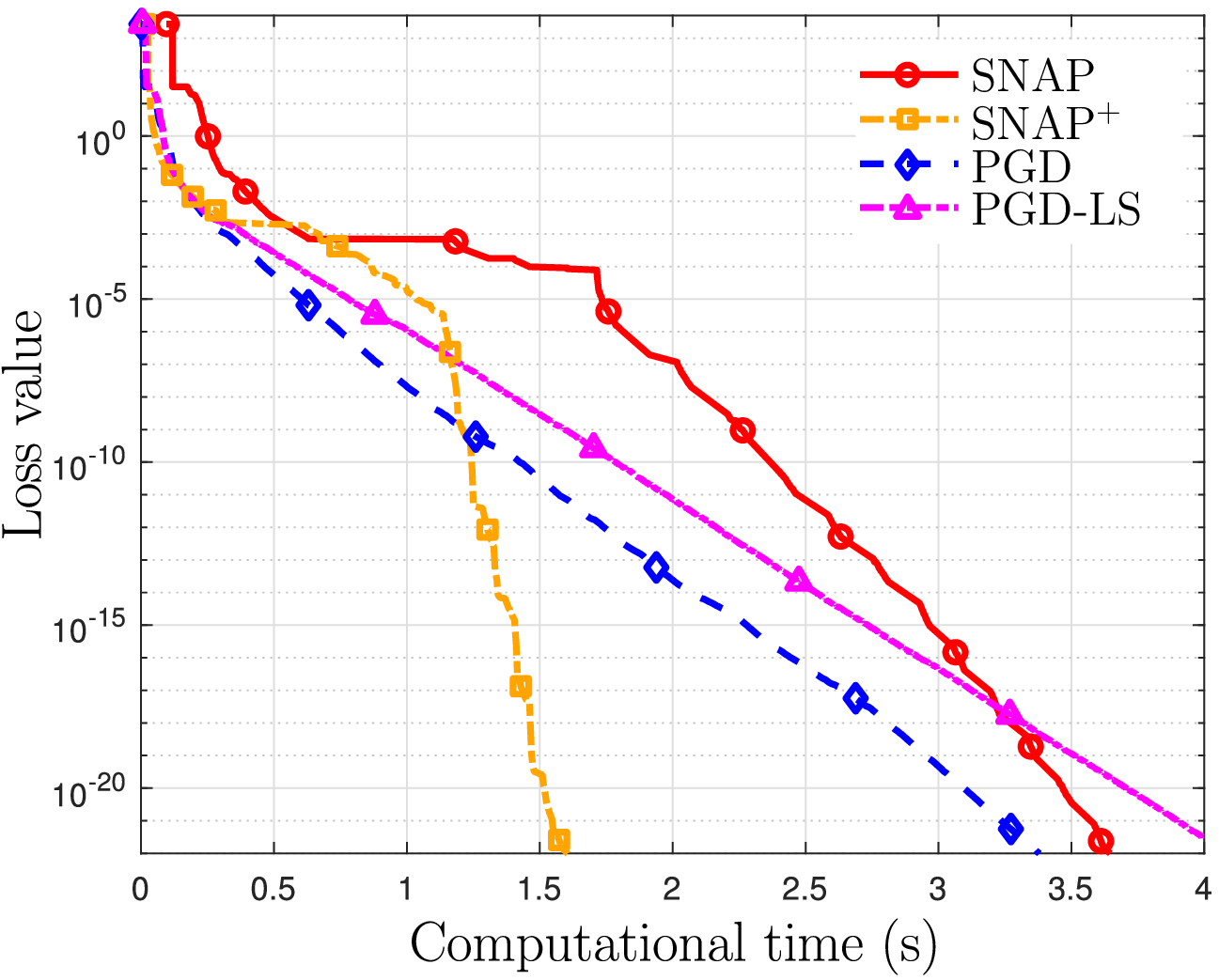}}
 \caption{The convergence behaviors of SNAP, SNAP$^+$, PGD, PGD-LS for NMF, where $c=1\times 10^{-5}$.}
\label{fig:nmf2}
\end{figure}
\begin{figure}[ht]
\centering
\subfigure[Loss value versus iteration]{
\label{fig:lvi3}
\includegraphics[width=.4\linewidth]{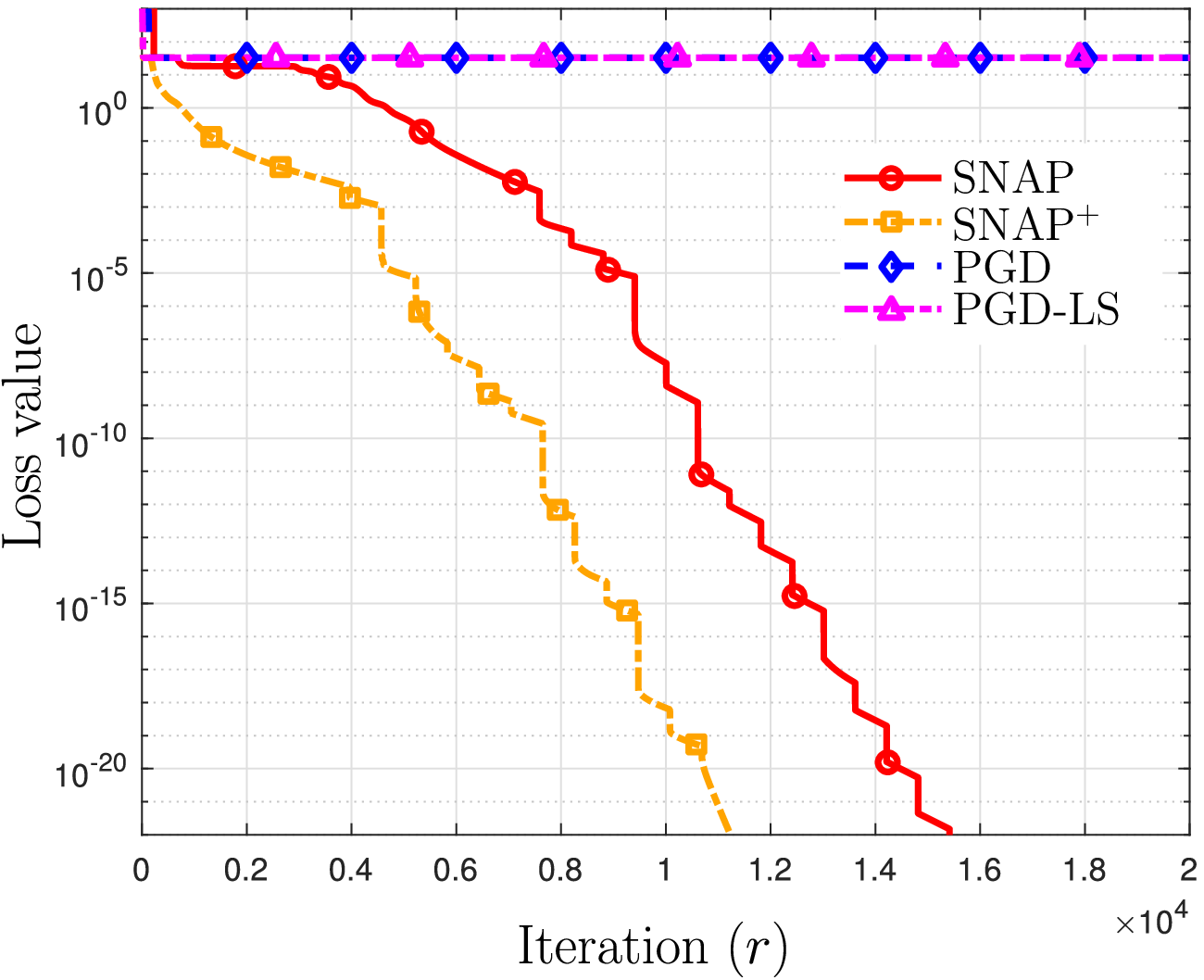}}
\hspace{0.4in}
\subfigure[{Loss value versus computational time}]{
\label{fig:lvt3}
\includegraphics[width=.4\linewidth]{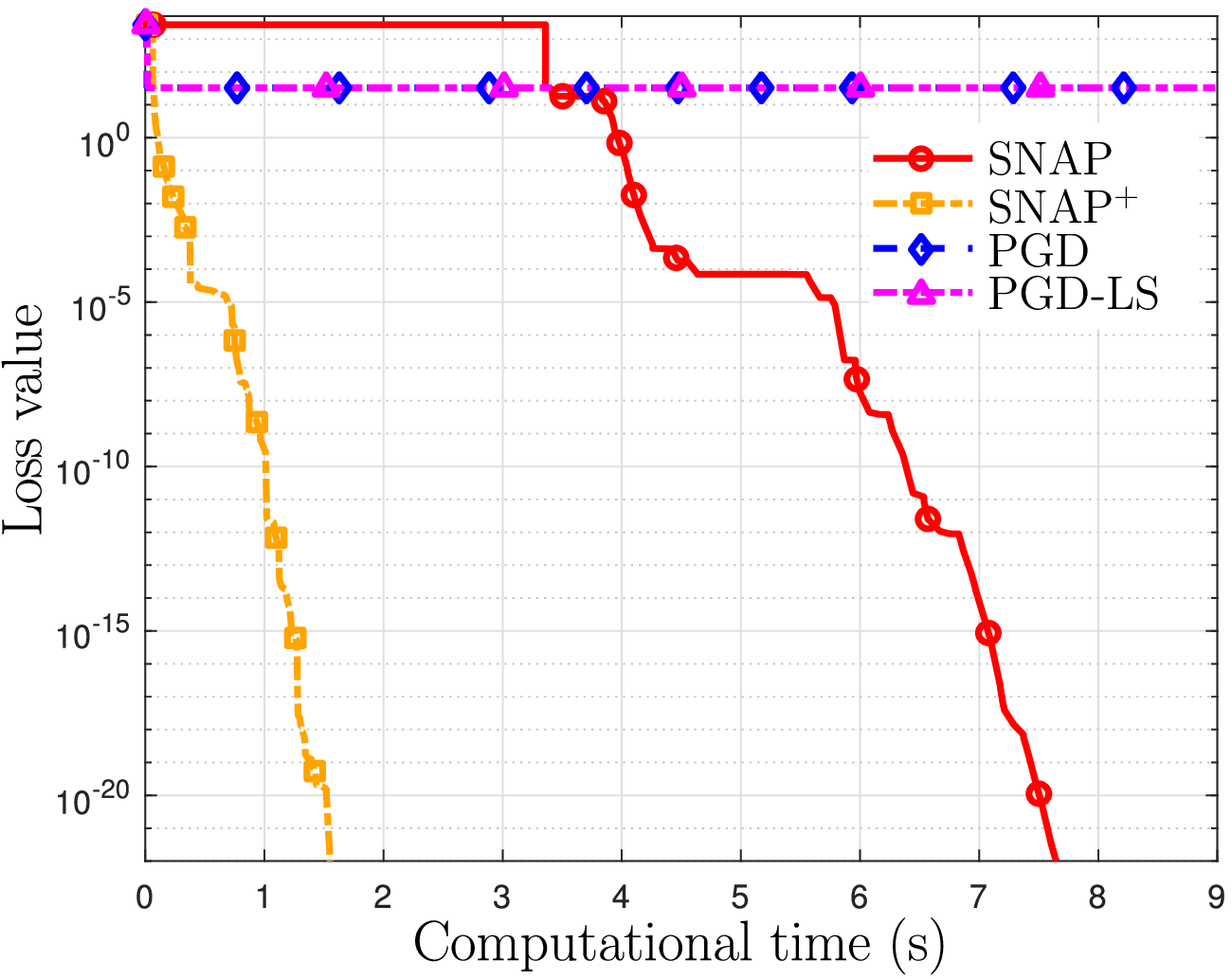}}
\caption{The convergence behaviors of SNAP, SNAP$^+$, PGD, PGD-LS for NMF, where $c=1\times 10^{-10}$.}
\label{fig:nmf3}
\end{figure}

\subsubsection{Synthetic Dataset}\label{sec:snmf}
We compare the proposed SNAP, SNAP$^+$ with PGD and PGD-LS on the synthetic dataset for MNF problem and show the advantages of exploiting negative curvatures. The data matrices are randomly generated, where $m=20$, $n=50$, $k=10$, $\bM=\bW\bH^{\T}$, and $[\bW;\bH]\in\mathbb{R}^{(n+m)\times k}$ follows the uniform distribution in the interval $[0,1]$. Further, we randomly set 5\% entries of $\bM$ as 0. The starting point for all the algorithms is 
$\bX^{(1)}=c\pi_{\mathcal{A}}([\bW^{(1)};\bH^{(1)}])$, where $\bW^{(1)}$ and $\bH^{(1)}$ are randomly generated and follow Gaussian distribution $\mathcal{CN}(0,1)$. Clearly, the origin point is a strict saddle point. We use three different constants $c$ to initialize sequence $\bX^{(r)}$ and the results are shown in \figref{fig:nmf1}--\figref{fig:nmf3}, where step-size  $\alpha_{\pi}$ for PGD, SNAP, and SNAP$^+$ is $0.01$, $\beta=0.01$, $\epsilon_G=1\times 10^{-3}$, $T=100$, $r_{\textsf{th}}=600$, $\sR=1\times 10^{-4}$ and $\sF=100$. Note that the stopping criteria are removed in the simulation, otherwise PGD and PGD-LS will not give any output if the initialing point is close to origin.

\begin{figure}[ht]
\centering
\subfigure[Loss value versus iteration]{
\label{fig:lvi4}
\includegraphics[width=.4\linewidth]{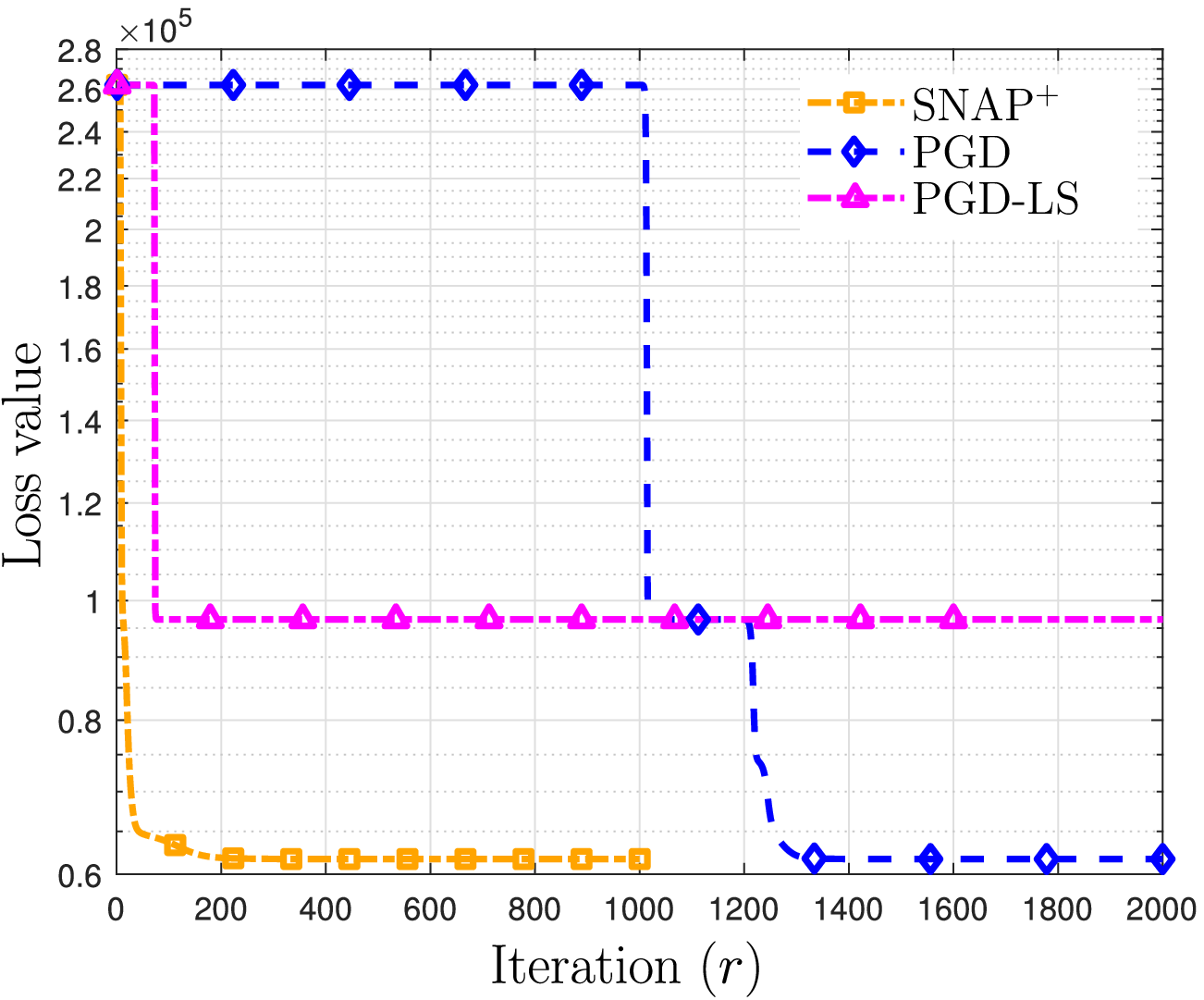}}
\hspace{0.4in}
\subfigure[{Loss value versus computational time}]{
\label{fig:lvt4}
\includegraphics[width=.4\linewidth]{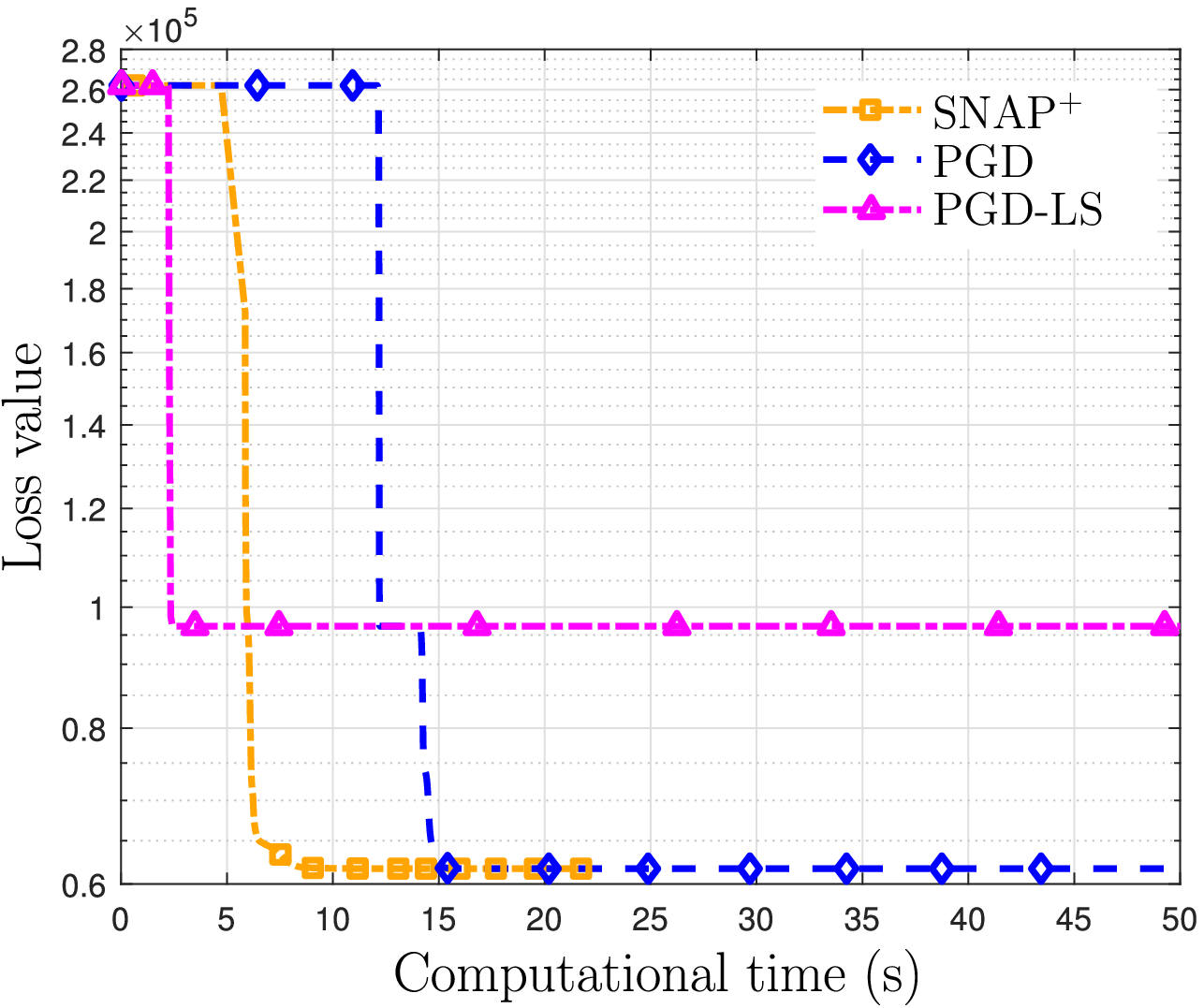}}
\caption{The convergence behaviors of SNAP$^+$, PGD, PGD-LS for NMF, where $c=1\times 10^{-10}$.}
\label{fig:nmf4}
\end{figure}

It can be observed that when $c$ is large, all algorithms can converge to the global optimal point of this NMF problem, whereas when $c$ is small as shown in \figref{fig:nmf3} PGD and PGD-LS only converge to a point that has a very large loss value compared with the ones achieved by SNAP and SNAP$^+$. These results show that when the iterates are near the strict saddle points, by exploring the negative curvature, SNAP and SNAP$^+$ are able to escape from the saddle points quickly and converge to the global optimal solutions. Comparing SNAP and SNAP$^+$, we can see that the computational time of SNAP$^+$ is less than SNAP. The reason is simple, which is the computational complexity of calculation of Hessian and eigen-decomposition is too high so that SNAP takes more time to converge. By accessing the gradient and loss value of the objective function, SNAP$^+$ is only required to compute one eigen-vector whose eigenvalue is the smallest of Hessian around the strict saddle point. The line search algorithm is one of the most effective ways of computing step-sizes. From \figref{fig:nmf1} and \figref{fig:nmf2}, it can be observed that PGD-LS converges faster than PGD in terms of iterations but costs more computational time. SNAP and SNAP$^+$ are using line search occasionally rather than each step, so the computational time is not as high as PGD-LS. In \figref{fig:lvt3}, it can be observed that SNAP and SNAP$^+$ obtain loss values that are many orders of magnitude smaller than those obtained by PGD and PGD-LS, confirming that the proposed methods are able to escape from saddle points, while PGD and PGD-LS get trapped.

\subsubsection{Real Dataset}
We also compare the convergence behaviours of the algorithms on USPS handwritten digits dataset \cite{hull1994database}, where images are $16\times 16$ grayscale pixels. In \figref{fig:nmf4}, we use the  $m=3250$, $n=256$ , $k=5$. Since the problem size is large, performing eigenvalue decomposition is prohibitive, so we only compare SNAP$^+$, PGD, and PGD-LS, where $\alpha_\pi=5\times 10^{-3}$ and $\beta=5\times 10^{-3}$.

\subsection{Nonnegative Two Layer Non-linear Neural Networks}

In this section, we consider a nonnegative two layer non-linear neural network, which is
\begin{align}
\notag
\min_{\bW\in\mathbb{R}^{k\times d},\bH\in\mathbb{R}^{m\times k}} &\|\bW\sigma(\bH^{\T}\bX)-\bY\|^2
\\
\textrm{s.t.}\quad&\bW\ge0,\bH\ge0
\end{align}
where $\sigma(\cdot)$ denotes  the activation function. The formulation has a wide applications in regression and learning problems.
\\
\noindent
In the numerical simulation, the activation function is chosen as sigmoid. Data matrix $\bX\in\mathbb{R}^{m\times n}$ is randomly generated which follows uniform distribution in the interval $[0,1]$, where $n=100$ denotes the number of samples and $m=50$ denotes the number of features. Weight matrices $\bW\in\mathbb{R}^{k\times d}$ and $\bH\in\mathbb{R}^{m\times d}$ are also randomly generated, where $k=10$ denotes dimension of the output, $d=15$ is the dimension of the  hidden layer. Then, data matrix $\bY\in\mathbb{R}^{k\times n}$ is generated by $\bY=\bW\sigma(\bH^{\T}\bX)$. The step-size $\alpha_{\pi}$ for PGD, SNAP$^+$ is 0.001, $\beta=0.001$, $r_{\textsf{th}}=50$, $T=50$, $\sR=1\times 10^{-4}$, $\sF=50$, and $\epsilon_G=1\times 10^{-2}$. From \figref{fig:nn1}, it can be observed that SNAP$^+$ can find the stationary  points faster than PGD and PGD-LS.
\begin{figure}[ht]
\centering
\subfigure[Loss value versus iteration]{
\label{fig:lvi9}
\includegraphics[width=.4\linewidth]{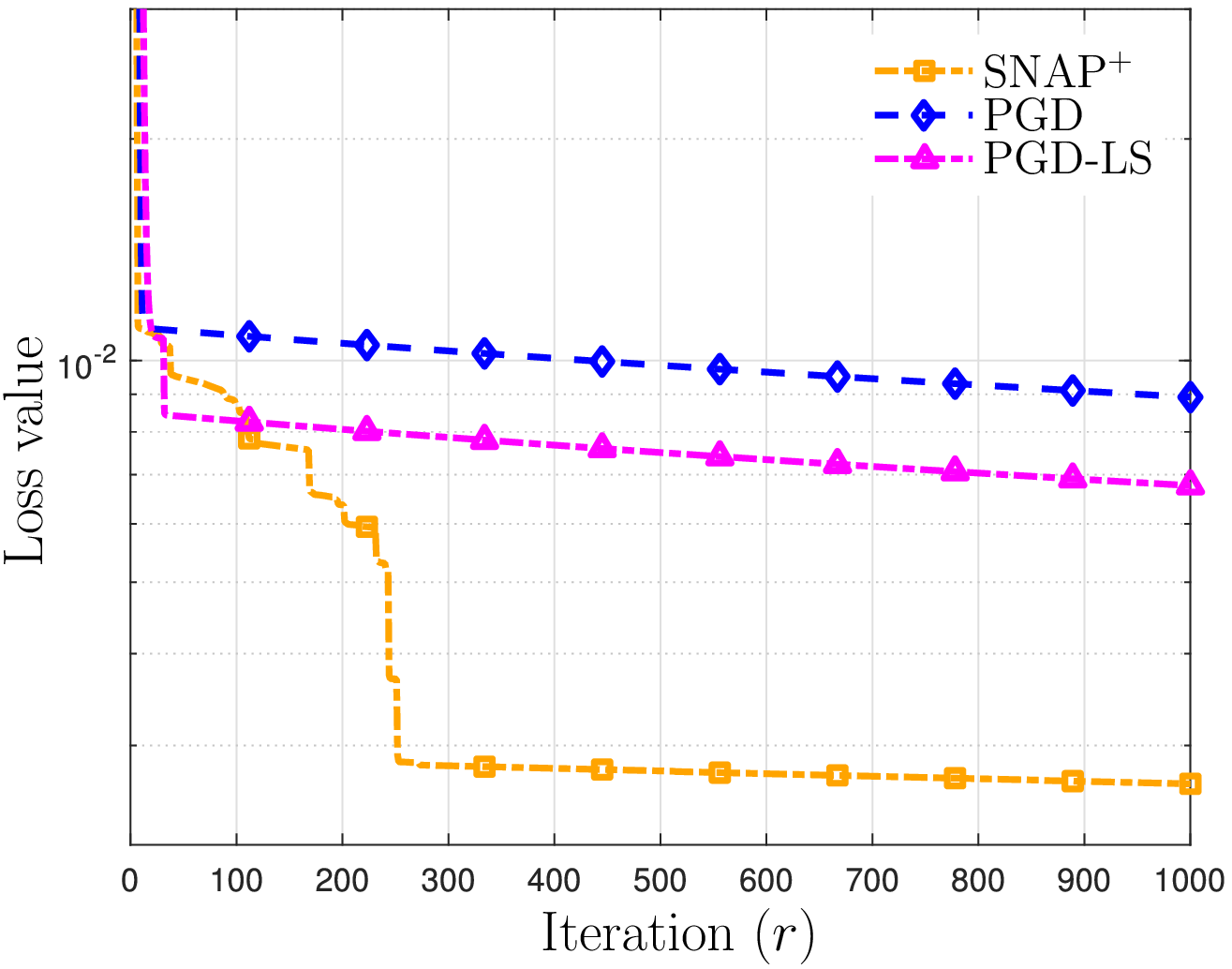}}
\hspace{0.4in}
\subfigure[{Loss value versus computational time}]{
\label{fig:lvt9}
\includegraphics[width=.4\linewidth]{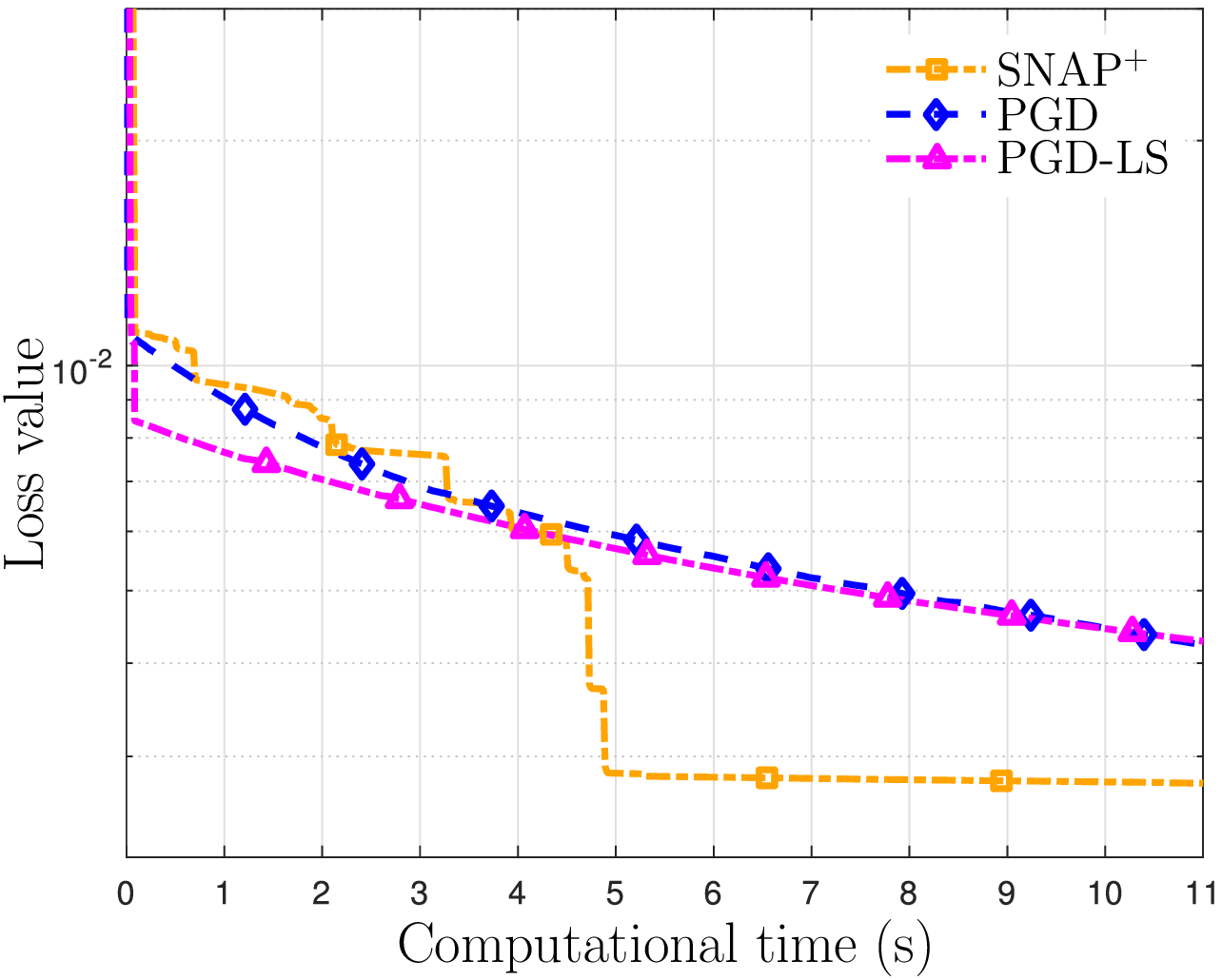}}
\caption{The convergence behaviors of SNAP$^+$, PGD, PGD-LS for NNN, where $c=1$.}
\label{fig:nn1}
\end{figure}
\subsection{Symmetric Matrix Factorization over Simplex}
In application of topic modelling, the simplex constraint turns out to be essential in modeling (marginal) probability mass functions. In this section, we also consider symmetric matrix factorization over a simplex constraint as the following,
\begin{align}
\notag
\min_{\bX\in\mathbb{R}^{n\times k}}\quad&\|\bM-\bH\bH^{\T}\|
\\\notag
\textrm{s.t.}\quad&\bH\ge0,\quad\bH^{\T}\b1=\b1.
\end{align}
In the numerical experiments, the data is generated similar as the NMF case, where $n=100$, $k=5$ and each column of $\bH$ is normalized. We set $\alpha_{\pi}=1\times 10^{-2}$, $T=100$, $r_{\textsf{th}}=100$, $\sR=1\times 10^{-4}$ and $\sF=100$. From \figref{fig:pnmfs}, it is interesting to see that three algorithms converge to different objective values. It turns out there would be multiple stationary points around the origin, where SNAP$^+$ finds the lowest one.
\begin{figure}[ht]
\centering
\subfigure[Loss value versus iteration]{
\label{fig:lvi7}
\includegraphics[width=.4\linewidth]{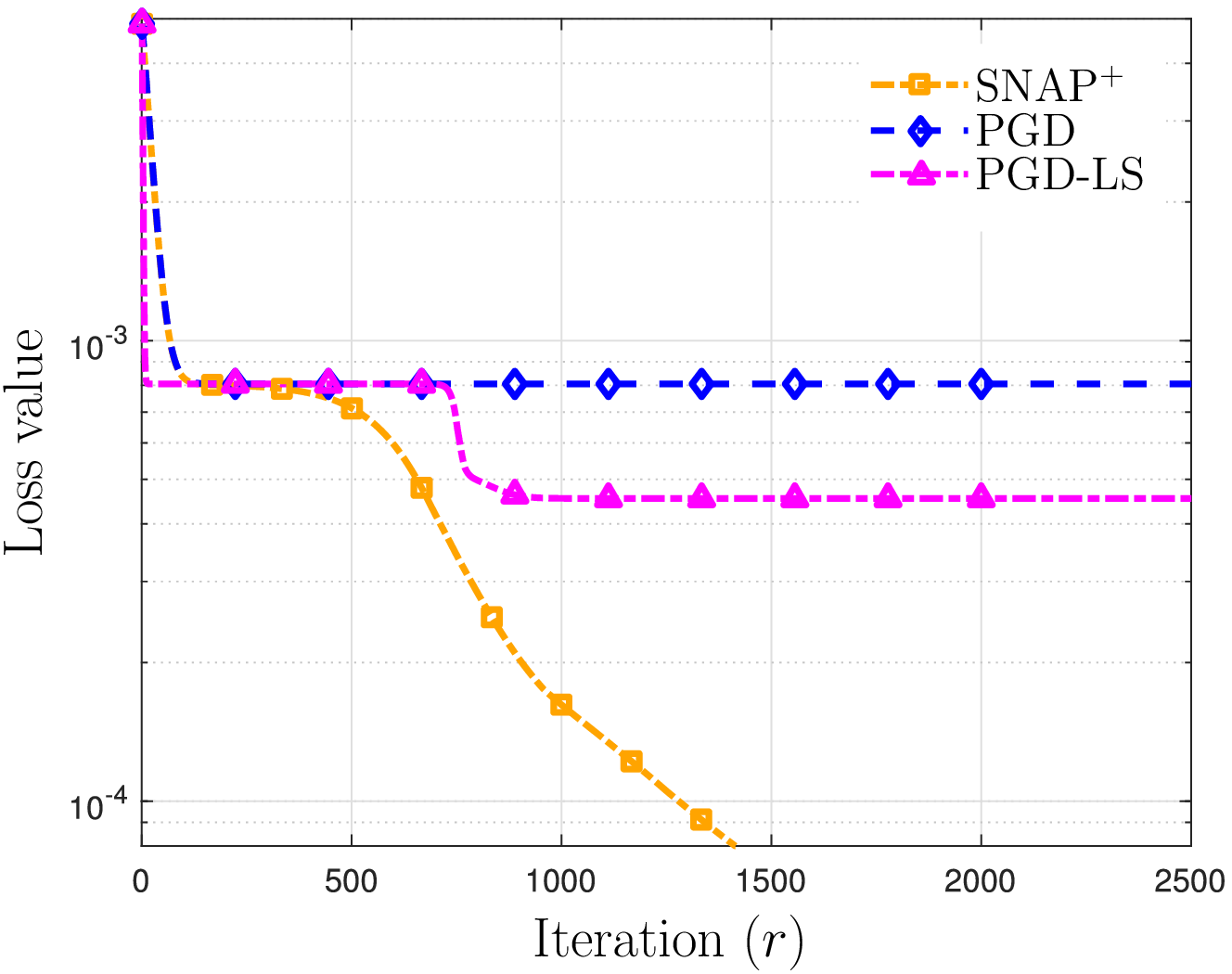}}
\hspace{0.4in}
\subfigure[{Loss value versus computational time}]{
\label{fig:lvt7}
\includegraphics[width=.4\linewidth]{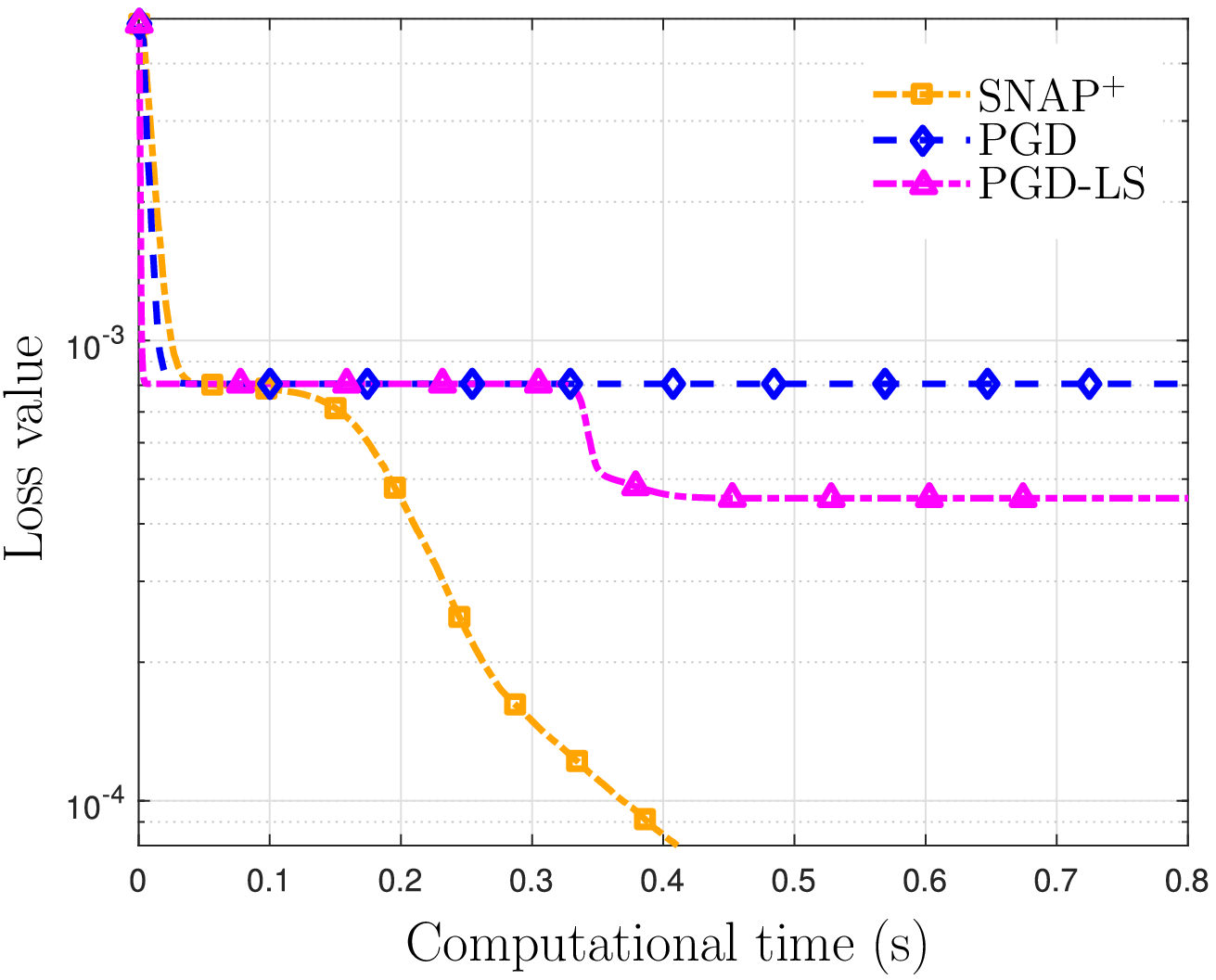}}
\caption{The convergence behaviors of SNAP$^+$, PGD, PGD-LS for matrix factorization under simplex constraints, where $c=1\times 10^{-10}$.}
\label{fig:pnmfs}
\end{figure}

\begin{figure}[ht]
\centering
\subfigure[Loss value versus iteration]{
\label{fig:lvi6}
\includegraphics[width=.4\linewidth]{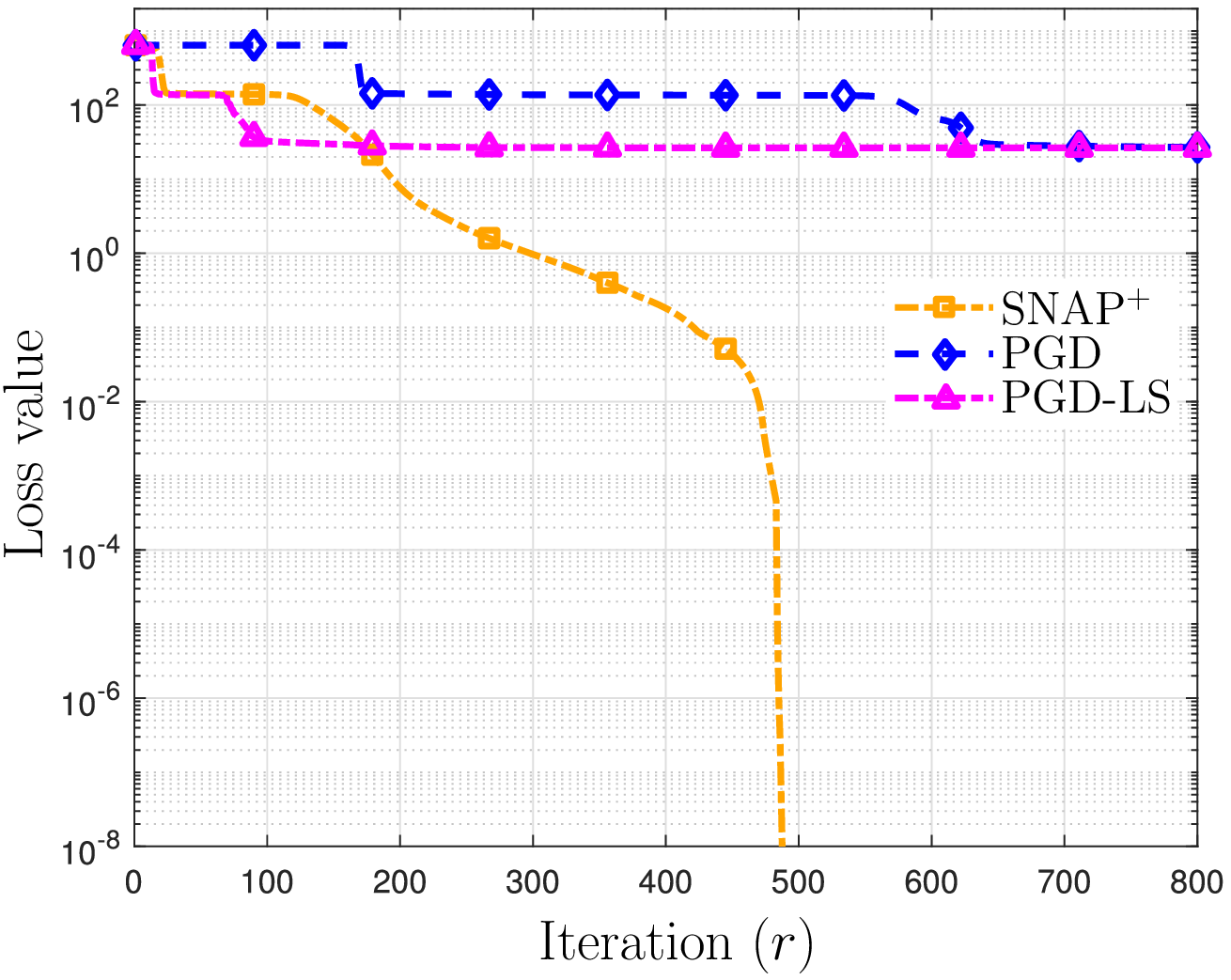}}
\hspace{0.4in}
\subfigure[{Loss value versus computational time}]{
\label{fig:lvt6}
\includegraphics[width=.4\linewidth]{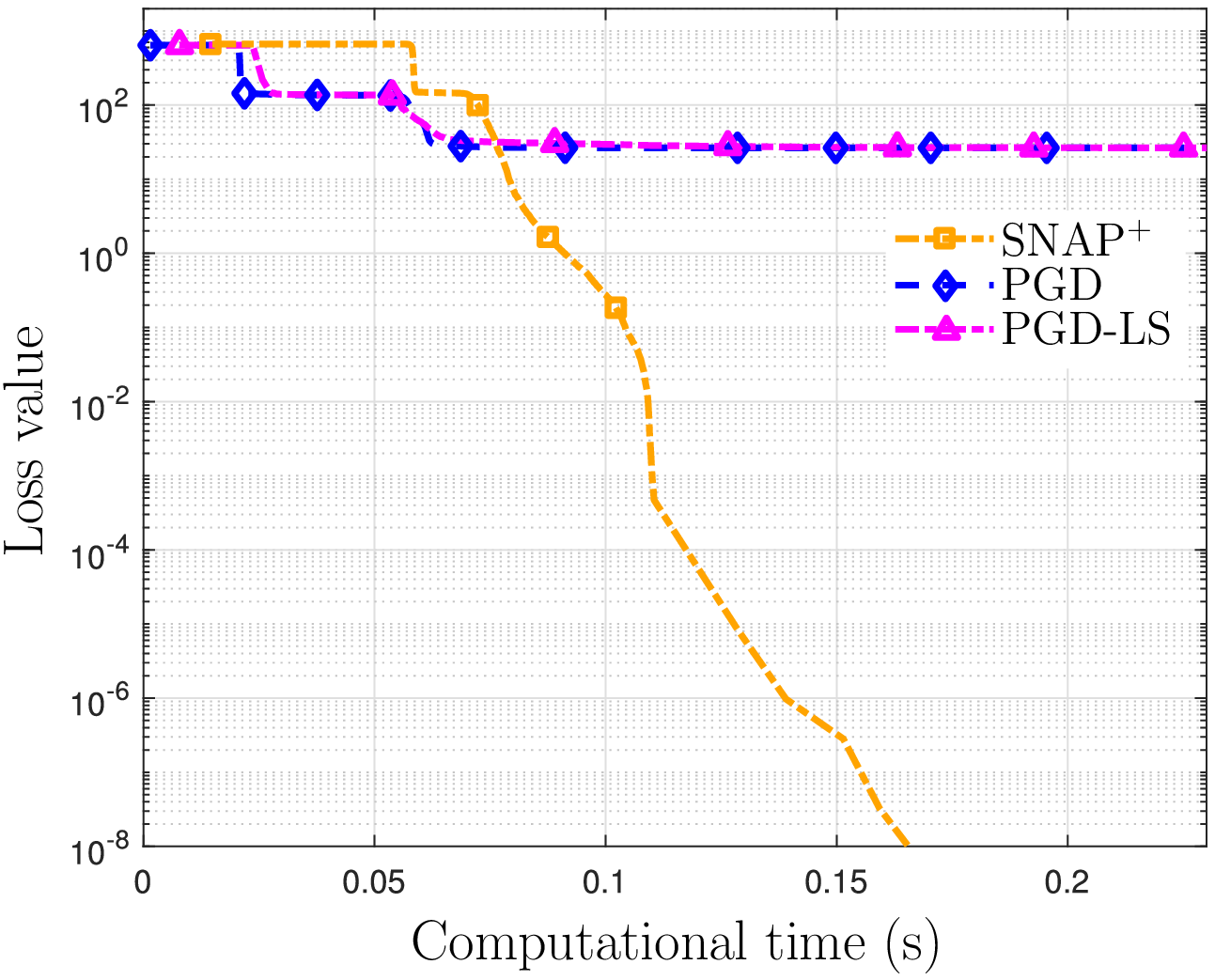}}
\caption{The convergence behaviors of SNAP$^+$, PGD, PGD-LS for penalized NMF, where $c=1\times 10^{-10}$.}
\label{fig:pnmf}
\end{figure}
\begin{figure}[ht]
\centering
\subfigure[Loss value versus iteration]{
\label{fig:lvi8}
\includegraphics[width=.4\linewidth]{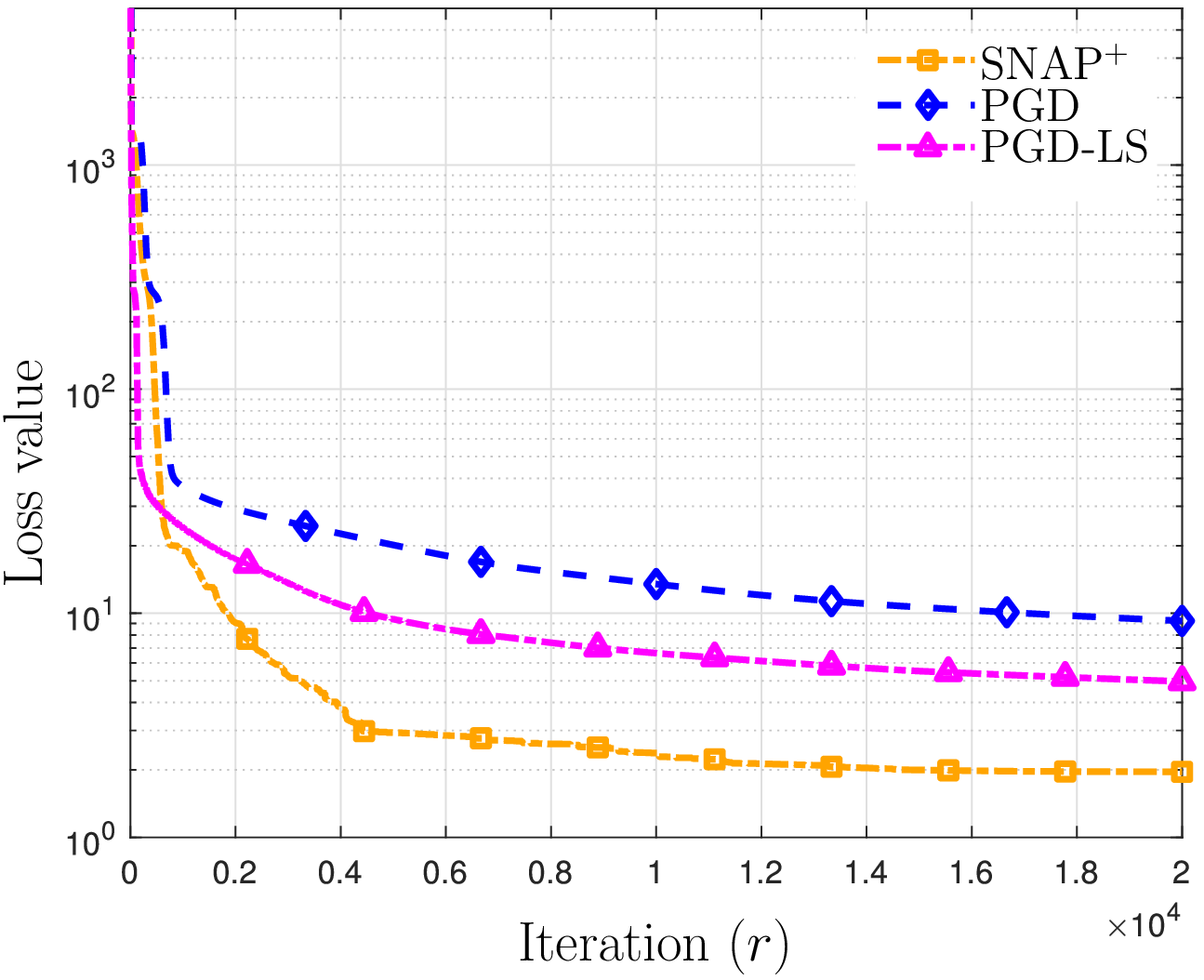}}
\hspace{0.4in}
\subfigure[{Loss value versus computational time}]{
\label{fig:lvt8}
\includegraphics[width=.4\linewidth]{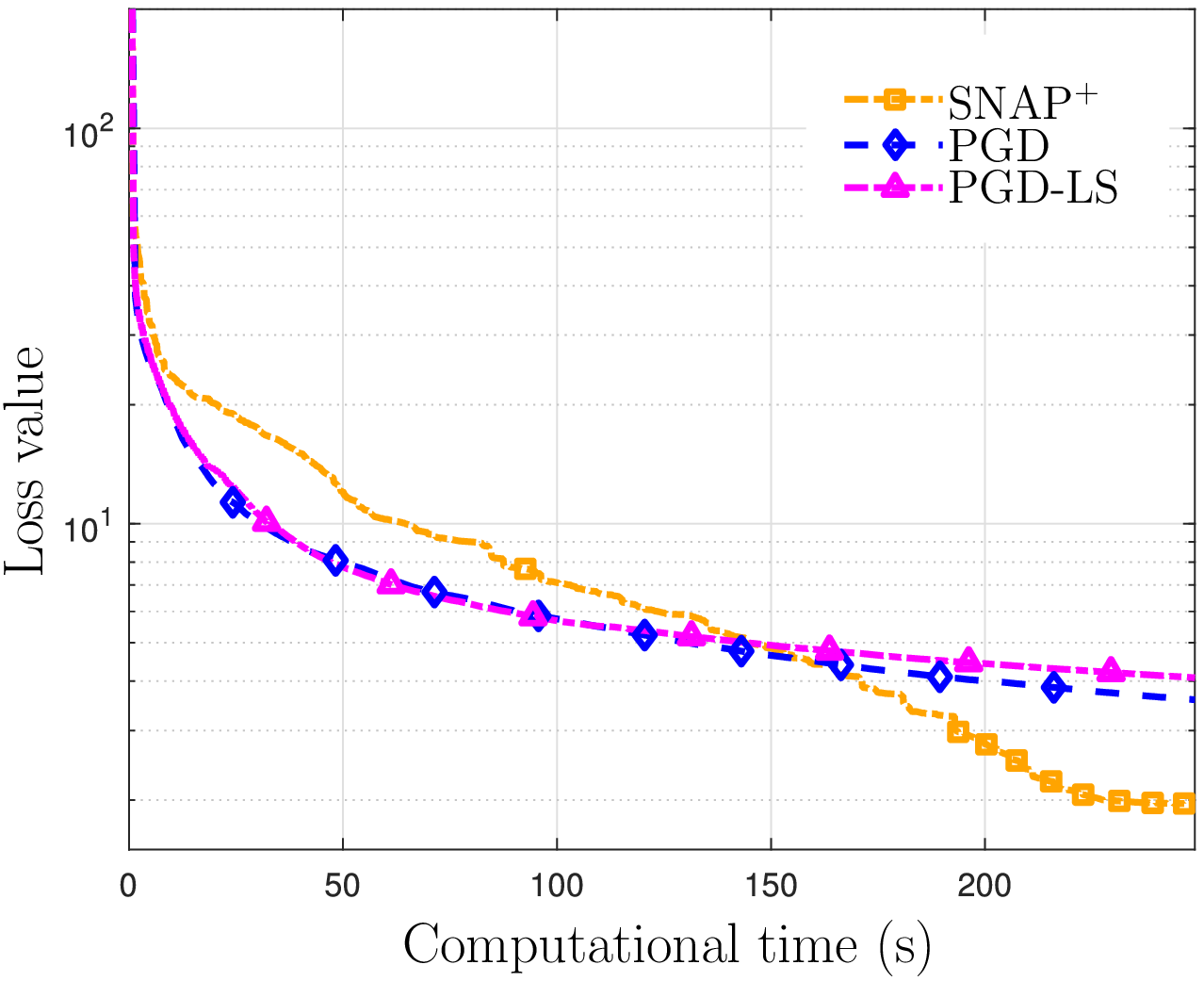}}
\caption{The convergence behaviors of SNAP$^+$, PGD, PGD-LS for penalized NMF, where $c=1$.}
\label{fig:pnmf3}
\vspace{-0.4cm}
\end{figure}

\subsection{Penalized NMF}
We also consider a penalized version of NMF, i.e.,
\begin{align}
\notag
\min_{\bW\in\mathbb{R}^{n\times k},\bH\in\mathbb{R}^{m\times k}} &\|\bW\bH^{\T}-\bM\|^2+\frac{\rho}{2}\sum_i^m\left((\b1^{\T}\bh_i)^2-\|\bh_i\|^2\right)
\\\notag
\textrm{s.t.}\quad&\bW\ge0,\bH\ge0
\end{align}
where $\bh_i$ denotes the columns of $\bH$.
It has been shown in \cite{chang19} that this variant of NMF could provide improved clustering accuracy, compared with the classic NMF.
Here, we only utilize this formulation to evaluate the performance of the algorithms. In the numerical experiments, we have the similar experimental step-up as the NMF case in \ref{sec:snmf}. The problem size is $m=100$, $n=40$ and $k=5$. We select $\rho=0.1$,  $\alpha_{\pi}=\beta=1\times 10^{-3}$, $T=100$, $r_{\textsf{th}}=20$, $\sR=1\times 10^{-4}$ and $\sF=100$. It can be observed from \figref{fig:pnmf},  SNAP$^+$ converges to the global minimum points of this penalized NMF problem while other ones converge to some points that have relatively large objective values.

\sledit{Further, we also implement the algorithms on a relatively larger problem, where $m=2000$, $n=50$, $k=5$.  In this case, we compare the algorithms by a large initialization, i.e., $c=1$. It can be observed from \figref{fig:pnmf3} that SNAP$^+$ converges faster with respect to the number of iterations.}

\clearpage
\newpage
\normalsize
\bibliographystyle{ieeetr}
\bibliography{refs}
\clearpage
\newpage
\normalsize
\appendix
\section{Proofs Related to Stationary Points}
\subsection{Proof of \prref{pr:slack1}}\label{sec:slack1}
\begin{proof}
When $f(\bx)=g(\bx)+\bq^{\T}\bx$, the KKT conditions of problem \eqref{eq.pro} are given by
\begin{subequations}
\begin{align}
    \nabla g(\bx^*)+\sum_{j\in\mathcal{A}(\bx^*)}\bmu^*_j\bA_j&=-\bq,
    \\
    \bA_j\bx^*&=\bb_j,\quad\bmu_j\ge0\quad\forall j\in\mathcal{A}(\bx^*),
    \\
    \bA_j\bx^*&<\bb_j,\quad\forall j,
    \\
    \bmu^*_j&=0,\quad\forall j\notin\mathcal{A}(\bx^*).
\end{align}
\end{subequations}
Since we assumed that $\bx^*$ has at least one active constraint, we have $|\mathcal{A}(\bx^*)|\ge 1$.

We prove the claim by contradiction. Assume that the  strict complementarity condition dose not hold at $\bx^*$. Without loss of generality, assume that $\bA_1\bx^*=\bb_1$ and $\bmu_1=0$. Consider the Lipschitz continuous map $\Phi$ defined below
\begin{equation}\label{eq.defphi}
\Phi(\bx^*,\bmu^*)=\nabla g(\bx^*)+\sum_{j\in\mathcal{A}(\bx^*)}\bmu^*_j\bA_j.
\end{equation}
This is a map that map a set $\mathcal{T}$ to the entire space of $\mathbb{R}^d$ (because $\bq$ is generated from a continuous measure in $\mathbb{R}^d$), where the set $\mathcal{T}$ is given below:
\begin{equation}
\mathcal{T}=\{(\bx^*,\bmu^*)|\bA_j\bx^*=\bb_j,j\in\mathcal{A}(\bx^*),\bA_j\bx^*<\bb_j,
\bmu^*_j=0,j\notin\mathcal{A}(\bx^*),\bmu^*\succeq0,\bA_1\bx^*=\bb_1,\bmu^*_1=0\}.
\end{equation}
In the following, we will quantify the dimension of $\mathcal{T}$. By assumption, all the $\bA_j$'s with $j\in\mathcal{A}(\bx^*)$ are linearly independent, that is, $\bA'(\bx^*)$ is a full row rank matrix. It follows that  $\bx^*$ is in the range space of matrix $\bA'(\bx^*)$. Since $\bA'(\bx^*)\bx^*=\bb'$, we know that the dimension of the active space of $\bx^*$ is the rank of $\bA'(\bx^*)$, meaning that the dimension of the free space of $\bx^*$ is the rank of $\textsf{Null}(\bA'(\bx^*))$, i.e., $(d-|\mathcal{A}(\bx^*)|)$. \footnote{For the notations of the free and active space, please see section \ref{sec:freespace}} Note that there are $|\mathcal{A}(\bx^*)|$ active constraints and $\bmu_1=0$, so the dimension of the free space of vector $\bmu$ is $(|\mathcal{A}(\bx^*)|-1)$. Therefore, $\Phi$ maps from a $(d-1)$-dimensional subspace to a $d$-dimensional space, implying that the image of the mapping is zero-measure in $\mathbb{R}^d$. However, $\bq$ is generated from a continuous measure, which results in a contradiction of the assumption that the strict complementarity condition does not hold. \end{proof}

\subsection{Proof of \coref{co:slack2}}\label{sec:slack2}
\begin{proof}
We apply the same proof technique in section \ref{sec:slack1} to show the claim of \coref{co:slack2}.  Let $\mathcal{S}(\bx^*)\bydef\{j\mid \bA_j,\forall j\in\mathcal{A}(\bx^*)\; \textrm{are linearly independent}\}$ and let $\overbar{\mathcal{S}}(\bx^*)$ denote the complement of set $\mathcal{S}(\bx^*)$. Clearly, $\mathcal{S}(\bx^*)$ is a subset of $\mathcal{A}(\bx^*)$. First,
we define the matrix $\bA''(\bx^*)$ as
\begin{align}\label{eq:A:sprime}
{\bA}''(\bx^*)\triangleq \left[\begin{array}{l} \vdots \\ \bA_j. \\ \vdots\end{array}\right]\in\mathbb{R}^{|\mathcal{S}(\bx^*)|\times d},\quad \forall j\in\mathcal{S}(\bx^*).
\end{align}
Obviously, $\bA''(\bx^*)$ is a full row rank matrix, where the rank of $\bA''(\bx^*)$ is the size of $\mathcal{S}(\bx^*)$, i.e., $|\mathcal{S}(\bx^*)|$. In the following, we will show that the number of simultaneously active constraints is at most $|\mathcal{S}(\bx^*)|$. We prove the claim by contradiction. Consider $i\in\overbar{\mathcal{S}}(\bx^*)$. Since $i\notin\mathcal{S}(\bx^*)$, $\bA_i$ can be linearly represented by $\bA_j$s $j\in\mathcal{A}(\bx^*)$, i.e., \begin{equation}\label{eq.linde}
 \bA_i=\sum_j\alpha_j\bA_j,j\in\mathcal{A}(\bx^*),
\end{equation} where there exists at least one $\alpha_j$ which is not zero. Since $i,j\in\mathcal{A}(\bx^*)$, we have $\bA_j\bx^*=\bb_j$. Combining \eqref{eq.linde}, we have $\sum_j\alpha_j\bb_j=\bb_i$. Since $\bb_i$ is generated from a continuous measure, $\sum_j\alpha_j\bb_j=\bb_i$ will not hold with high probability. We have a contradiction. Therefore, we can conclude that the dimension of the free space of $\bx^*$ is at least $d-|\mathcal{S}(\bx^*)|$.

Next, we use the same argument  as the proof of \coref{co:slack2} to quantify the dimension of $\bmu$. Since there are $|\mathcal{S}(\bx^*)|$ active constraints and $\bmu_1=0$, the dimension of $\bmu$ is at most $|\mathcal{S}(\bx^*)|-1$. Thus,  the dimension of $\mathcal{T}$ is $d-1$, meaning that $\Phi$ defined in \eqref{eq.defphi} maps from a $d-1$ dimension subset to a $d$-dimensional space. Therefore, the image is zero-measure in $\mathbb{R}^d$. However, $\bq$ is generated from a continuous measure, which again results in a contradiction of the assumption that the strict compementarity condition does not hold.
\end{proof}

\subsection{Equivalence of First-Order Conditions.}\label{sec:eq00}
\begin{lemma}\label{le:equiv}
The first-order conditions \eqref{eq.cond11} and \eqref{eq.cond21} are equivalent in the following sense: For any tuple $(\bx^*,\widetilde{\epsilon}_G)$ that satisfies \eqref{eq.cond21}, this pair also satisfies \eqref{eq.cond11}. Alternatively, for any tuple $(\bx^*,{\epsilon}_G)$ that satisfies \eqref{eq.cond11}, then  $(\bx^*,{\widetilde{\epsilon}}_G)$ satisfies \eqref{eq.cond21}, where ${{\widetilde{\epsilon}}_G\triangleq \frac{1}{2}\epsilon_G\left(\|\nabla f(\bx^*)\|+1+\alpha\epsilon_G\right).}$
\vspace{-5px}
\end{lemma}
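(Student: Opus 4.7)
The plan is to prove both directions by leveraging the variational characterization of the projection operator $\pi_{\mathcal{X}}$. Throughout, let $\bx^+ := \pi_{\mathcal{X}}(\bx^* - \alpha \nabla f(\bx^*))$, so by definition $g_\pi(\bx^*) = (\bx^+ - \bx^*)/\alpha$. The key tool is the projection's first-order optimality condition,
$$\langle \bx^+ - \bx^* + \alpha\nabla f(\bx^*),\, \bx - \bx^+\rangle \ge 0, \qquad \forall\, \bx \in \mathcal{X},$$
which follows from the strong convexity of the quadratic defining $\pi_{\mathcal{X}}$.

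For the direction \eqref{eq.cond11} $\Rightarrow$ \eqref{eq.cond21}, I would fix an arbitrary $\bx\in\mathcal{X}$ with $\|\bx-\bx^*\|\le 1$ and split
$$\alpha\nabla f(\bx^*)^{\T}(\bx - \bx^*) = \alpha\nabla f(\bx^*)^{\T}(\bx - \bx^+) + \alpha\nabla f(\bx^*)^{\T}(\bx^+ - \bx^*).$$
The projection inequality bounds the first summand below by $\langle \bx^* - \bx^+, \bx - \bx^+\rangle$; expanding $\bx - \bx^+ = (\bx - \bx^*) + (\bx^* - \bx^+)$ and applying Cauchy--Schwarz together with $\|\bx^+ - \bx^*\| = \alpha\|g_\pi(\bx^*)\|\le \alpha\epsilon_G$ controls it by a quantity linear in $\epsilon_G$, while the quadratic term $\|\bx^+-\bx^*\|^2$ contributes the $\alpha\epsilon_G$ correction. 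The second summand is bounded below by $-\alpha\|\nabla f(\bx^*)\|\cdot\alpha\epsilon_G$ via Cauchy--Schwarz. Collecting the two contributions and dividing by $\alpha$ yields precisely the claimed $\widetilde{\epsilon}_G = \tfrac12\epsilon_G(\|\nabla f(\bx^*)\| + 1 + \alpha\epsilon_G)$ after careful bookkeeping of the constants.

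For the reverse direction \eqref{eq.cond21} $\Rightarrow$ \eqref{eq.cond11}, I would set $\bx=\bx^*$ in the projection inequality, which gives $\alpha\nabla f(\bx^*)^{\T}(\bx^+-\bx^*) \le -\|\bx^+-\bx^*\|^2$, equivalently $\nabla f(\bx^*)^{\T}(\bx^+ - \bx^*) \le -\alpha\|g_\pi(\bx^*)\|^2$. Meanwhile, to apply \eqref{eq.cond21} I need a feasible test point within the unit ball around $\bx^*$: if $\|\bx^+-\bx^*\|\le 1$ I take $\bx=\bx^+$ directly, otherwise by convexity of $\mathcal{X}$ the rescaled point $\bx = \bx^* + (\bx^+-\bx^*)/\|\bx^+-\bx^*\|$ is feasible and lies on the unit sphere. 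In either case \eqref{eq.cond21} yields $\nabla f(\bx^*)^{\T}(\bx^+-\bx^*) \ge -\widetilde{\epsilon}_G\cdot\max\{1,\|\bx^+-\bx^*\|\}$, and chaining with the upper bound from the projection inequality produces a (possibly quadratic) inequality in $\|g_\pi(\bx^*)\|$ that can be solved to obtain the desired bound.

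The main obstacle I anticipate is matching the precise form of $\widetilde{\epsilon}_G$ in the first direction; in particular, one has to keep track of when Cauchy--Schwarz is tight versus when the quadratic term $\|\bx^+-\bx^*\|^2$ from the projection inequality should be retained or dropped, and one must ensure the $\tfrac12$ factor emerges from symmetrically grouping the $\|\nabla f(\bx^*)\|$ and constant contributions. The only nontrivial geometric step is the convex-combination rescaling in the reverse direction, which crucially uses $\bx^*\in\mathcal{X}$ so that the segment from $\bx^*$ to $\bx^+$ lies entirely in the feasible set.
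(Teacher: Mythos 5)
Your overall route is the same as the paper's: both directions go through the projected point $\widetilde{\bx}=\pi_{\mathcal{X}}(\bx^*-\alpha\nabla f(\bx^*))$ (your $\bx^+$), the variational inequality of the projection, Cauchy--Schwarz, and the unit-ball restriction in \eqref{eq.cond21}. Your forward direction mirrors the paper's decomposition $\nabla f(\bx^*)^{\T}(\bx-\bx^*)=\nabla f(\bx^*)^{\T}(\bx-\widetilde{\bx})+\nabla f(\bx^*)^{\T}(\widetilde{\bx}-\bx^*)$ essentially verbatim; one small correction there is that the quadratic term $\|\bx^*-\bx^+\|^2$ is nonnegative and can simply be dropped --- the $\alpha\epsilon_G$ correction in $\widetilde{\epsilon}_G$ actually enters through the triangle inequality $\|\bx-\widetilde{\bx}\|\le\|\bx-\bx^*\|+\|\bx^*-\widetilde{\bx}\|\le 1+\alpha\epsilon_G$ --- and the constant bookkeeping you worry about is no worse than the paper's own (its factor $\tfrac{1}{2}$ is not really produced by its displayed inequalities either).

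The genuine gap is in your reverse direction, \eqref{eq.cond21}$\Rightarrow$\eqref{eq.cond11}. In the case $\|\bx^+-\bx^*\|\le 1$ you test \eqref{eq.cond21} at $\bx=\bx^+$ itself, and chaining with $\nabla f(\bx^*)^{\T}(\bx^+-\bx^*)\le-\tfrac{1}{\alpha}\|\bx^+-\bx^*\|^2$ gives $\tfrac{1}{\alpha}\|\bx^+-\bx^*\|^2\le\widetilde{\epsilon}_G$, i.e.\ $\|g_\pi(\bx^*)\|\le\sqrt{\widetilde{\epsilon}_G/\alpha}$, which is \emph{not} the claimed conclusion $\|g_\pi(\bx^*)\|\le\widetilde{\epsilon}_G$ (for small $\widetilde{\epsilon}_G$ the square-root bound is strictly weaker), so ``solving the quadratic inequality'' does not close this case; only your other case, $\|\bx^+-\bx^*\|>1$, where the rescaled point lies on the segment from $\bx^*$ to $\bx^+$ and the chained inequality becomes linear in $\|g_\pi(\bx^*)\|$, delivers the stated bound. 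The paper avoids the case split altogether: it divides the projection inequality by $\|\widetilde{\bx}-\bx^*\|$ and applies \eqref{eq.cond21} to the unit direction $(\widetilde{\bx}-\bx^*)/\|\widetilde{\bx}-\bx^*\|$, so the inequality is linear in $\|g_\pi(\bx^*)\|$ in all cases; note, however, that this silently treats $\bx^*+(\widetilde{\bx}-\bx^*)/\|\widetilde{\bx}-\bx^*\|$ as an admissible test point even when $\|\widetilde{\bx}-\bx^*\|<1$, where it need not be feasible --- exactly the regime your more scrupulous case analysis was trying to handle. To recover the lemma as stated you must either adopt that normalization (accepting the implicit feasibility/admissibility assumption the paper makes) or settle for the weaker square-root bound in the short-step case; your current sketch, as written, does not yield $\|g_\pi(\bx^*)\|\le\widetilde{\epsilon}_G$.
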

\begin{proof}
	First we show that  if $\bx^*$ and $\widetilde{\epsilon}_G$ together satisfy \eqref{eq.cond21}, then they will also satisfy \eqref{eq.cond11}.
	
	Let us define $\widetilde{\bx}\triangleq \pi_{\mathcal{X}}(\bx^*-\alpha\nabla f(\bx^*))$, that is, from \eqref{eq:projection} we have
	\begin{equation}
	\widetilde{\bx}=\arg\min_{\by\in\mathcal{X}}\|\bx^*-\alpha\nabla f(\bx^*)-\by\|^2.\label{eq.xopt}
	\end{equation}
	From the optimality condition of \eqref{eq.xopt}, we know
	\begin{equation}
	\langle\widetilde{\bx}-(\bx^*-\alpha\nabla f(\bx^*)),\by-\widetilde{\bx}\rangle\ge0,\; \forall \by\in\mathcal{X}.\label{eq.optcondx}
	\end{equation}
	Substituting $\by=\bx^*$ into \eqref{eq.optcondx}, we have
	\begin{equation*}
	\langle\widetilde{\bx}-(\bx^*-\nabla f(\bx^*)),\bx^*-\widetilde{\bx}\rangle\ge0,
	\end{equation*}
	which implies $\alpha\langle \nabla f(\bx^*),\widetilde{\bx}-\bx^*\rangle\le-\|\widetilde{\bx}-\bx^*\|^2$.
	Therefore, we have
	\begin{equation*}
	\|\widetilde{\bx}-\bx^*\|\le-\alpha\left\langle \nabla f(\bx^*),\frac{\widetilde{\bx}-\bx^*}{ \|\widetilde{\bx}-\bx^*\|}\right\rangle\mathop{\le}\limits^{\eqref{eq.cond21}}\widetilde{\epsilon}_G,
	\end{equation*}
	meaning that $\|g_{\pi}(\bx^*)\|\le\widetilde{\epsilon}_G$.
	
	Second, we show that if $\bx^*\in\mathcal{X}$ and ${\epsilon}_G>0$ together satisfy  \eqref{eq.cond11}, then there exists $\widetilde{\epsilon}_G>0$, together with $\bx^*$ satisfy \eqref{eq.cond21}. Further, if $\epsilon_G\to  0$ then $\widetilde{\epsilon}_G\to 0$.
	
	Again let us define $\widetilde{\bx}\triangleq \pi_{\mathcal{X}}(\bx^*-\alpha\nabla f(\bx^*))$. Consider an arbitrary point $\by\in\mathcal{X}$ and $\widetilde{\bx}+\theta(\by-{\widetilde{\bx}})\in\mathcal{X}$ where $\theta\in(0,1)$. We have, for all $\by\in \mathcal{X}$, the following holds:
	\begin{align}
	\notag
	&\|\bx^*-\alpha\nabla f(\bx^*)-\widetilde{\bx}\|^2
	\\\notag
	\le&\|\bx^*-\alpha\nabla f(\bx^*)-({\widetilde{\bx}}+\theta(\by-{\widetilde{\bx}}))\|^2
	\\\notag
	=&\|\bx^*-\alpha\nabla f(\bx^*)-\widetilde{\bx}\|^2-2\theta\langle\bx^*-\alpha\nabla f(\bx^*)-\widetilde{\bx},\by-\widetilde{\bx}\rangle+\theta^2\|\by-\widetilde{\bx}\|^2,
	\end{align}
	which is equivalent to 
	\begin{equation}\label{eq.equip}
	\langle \bx^*-\alpha\nabla f(\bx^*)-\widetilde{\bx},\by-\widetilde{\bx}\rangle\le\frac{\theta}{2}\|\by-\widetilde{\bx}\|^2,\;\;\forall \by\in\mathcal{X}.
	\end{equation}
	The right-hand-side (RHS) of \eqref{eq.equip} can be made arbitrarily small by $\theta$ for a given $\by$, so LHS of \eqref{eq.equip} cannot be strictly positive. Therefore, we have
	\begin{equation}
	\left\langle \bx^*-\alpha\nabla f(\bx^*)-\widetilde{\bx},\by-\widetilde{\bx}\right\rangle\le 0,
	\end{equation}
	which is
	\begin{align}
	\notag
	\langle\nabla f(\bx^*),\by-{\bx^*}\rangle\ge&\frac{1}{\alpha}\langle\bx^*-\widetilde{\bx},\by-\widetilde{\bx}\rangle+\frac{1}{\alpha}\langle\nabla f(\bx^*),\widetilde{\bx}-\bx^*\rangle
	\\
	\mathop{\ge}\limits^{(a)}&-\frac{1}{2}\epsilon_G\left(\|\nabla f(\bx^*)\|+\|\by-\widetilde{\bx}\|\right)
	\\
	\ge&-\frac{1}{2}\epsilon_G\left(\|\nabla f(\bx^*)\|+\|\by-\bx^*\|+\|\bx^*-\widetilde{\bx}\|\right)
	\\
	\mathop{\ge}\limits^{(b)}&-\frac{1}{2}\epsilon_G\left(\|\nabla f(\bx^*)\|+1+\alpha\epsilon_G\right)\triangleq -\widetilde{\epsilon}_G
	\end{align}
	where in $(a)$ we use $(1/\alpha)\|\bx^*-\widetilde{\bx}\|\le\epsilon_G$ and Cauchy–Schwartz inequality, in $(b)$ we know $\|\by-\bx^*\|\le 1$ from condition \eqref{eq.cond21}.
\end{proof}

\subsection{Proof of \prref{pr:00}}\label{sec:pr00}
\begin{proof}

The equivalence between the first-order conditions \eqref{eq.cond1} and \eqref{eq.cond2} is obvious, see the proof of Lemma \ref{le:equiv}. Below, we focus on the equivalence of the second-order conditions. We need to show that at a given solution $\bx^*$, if for every $\by$ that satisfies the following condition arising in \eqref{eq.cond12},
\begin{equation}
\bA_j\by=0,\quad \forall j\in\mathcal{A}(\bx^*) \quad\textrm{and}\quad\by\neq 0\label{eq.condk4},
\end{equation}
we have $\by^T \nabla^2 f(\bx^*)\by\ge {0}$, then
 we must have
 $$(\bx-\bx^*)^{\T}\nabla^2f(\bx^*)(\bx-\bx^*)\ge {0}, \; \forall\bx\in\mathcal{X},\langle\nabla f(\bx^*),\bx-\bx^*\rangle=0.$$

 Conversely, if, for every $\bx\in \mathcal{X}$ that satisfies  $\nabla f(\bx^*)^{\T}(\bx-\bx^*)=0$, we have $(\bx-\bx^*)^{\T}\nabla^2f(\bx^*)(\bx-\bx^*)\ge{0}$. Then the following must hold
 $$\forall\by\neq0,\quad\bA_j\by=0,\quad \forall j\in\mathcal{A}(\bx^*), \by\nabla^2f(\bx^*)\by\ge {0}.$$
If the above two directions both hold, then the conditions \eqref{eq.cond12} and \eqref{eq.cond22} will imply each other.

\noindent{\bf Part I.}  First, assume that \eqref{eq.cond12} holds. {For a given $\bx$ satisfying $\langle\nabla f(\bx^*),\bx-\bx^*\rangle=0$. By applying \eqref{eq.condk1},  we have $\sum^m_{j=1}\bmu^*_j\langle\bA_j,\bx-\bx^*\rangle=0$. Further by \eqref{eq.defofm}}, we can decompose the previous sum into the following according to whether the constraints are active or not:
\begin{equation}
\sum_{j\in\mathcal{A}(\bx^*)}\bmu^*_j\langle\bA_j,\bx-\bx^*\rangle+\sum_{j\notin\mathcal{A}(\bx^*)}\bmu^*_j\langle\bA_j,\bx-\bx^*\rangle=0.\label{eq.cond11tp10}
\end{equation}
Combining \eqref{eq.cond11tp10} with the complementarity conditions in \eqref{eq.condk2}, we have
\begin{equation}
\sum_{j\in\mathcal{A}(\bx^*)}\bmu^*_j\langle\bA_j,\bx-\bx^*\rangle=0\label{eq.frcond}.
\end{equation}
Also note that for each $\bx\in \mathcal{X}$, and each active constraint $j\in \mathcal{A}(\bx^*)$, we have
$$\bA_j\bx\le\bb_j=\bA_j\bx^*, \quad \forall~j\in \cA(\bx^*).$$
It follows that
$$\langle\bA_j,\bx-\bx^*\rangle  \le 0, \; \forall~j\in \cA(\bx^*),\quad\textrm{and}\quad\forall\bx\in\mathcal{X}.$$
Due to the assumed strict complementarity condition, we have  $\bmu^*_j>0,\; j\in\mathcal{A}(\bx^*)$.

Combining the above two facts, we can conclude that each term in the summation in \eqref{eq.frcond} is nonpositive. However, the requirement that the sum of them equals to zero implies that :
\begin{equation}
\langle\bA_j,\bx-\bx^*\rangle=0,\;  \forall~j\in\mathcal{A}(\bx^*).
\end{equation}
From \eqref{eq.cond12}, we know that $\forall \by, \bA'(\bx^*)\by=0$, we have { $\by^{\T}\nabla^2f(\bx^*)\by\ge 0$}, so we have \eqref{eq.cond22}.

{\noindent{\bf Part II.} Second, let us suppose that $\bx^*$ satisfies the exact first-order stationary solution, and for each {\it feasible} $\bx\in \mathcal{X}$ that satisfies the following
\begin{align*}
\langle \nabla f(\bx^*), \bx-\bx^*\rangle =0,
\end{align*}
 we have
\begin{align}\label{eq:use}
(\bx-\bx^*)^{\T} \nabla^2 f(\bx^*) (\bx-\bx^*)\ge 0.
\end{align}
Suppose at the KKT point $\bx^*, \bmu^*$ the strict complementarity condition is satisfied. Further we assume that for the inactive set, the following holds:
\begin{align*}
\bA_i \bx^* + \epsilon_i = \bb_i, \; \mbox{for some}~\epsilon_i>0, \; \; \forall~i\in \bar{\cA}(\bx^*).
\end{align*}
Let $\epsilon = \min_{i}\{\epsilon_i\}>0$.

We take the inner product between $\bx-\bx^*$ and left-hand-side (LHS) of \eqref{eq.condk1} and can obtain 
\begin{equation}\label{eq.fbyi}
\langle\nabla f(\bx^*),\bx-\bx^*\rangle=-\sum^m_{j=1}\bmu^*_j\langle\bA_j,\bx-\bx^*\rangle\mathop{=}\limits^{\eqref{eq.condk2}}-\sum_{j\in\mathcal{A}(\bx^*)}\bmu^*_j\langle\bA_j,\bx-\bx^*\rangle.
\end{equation}
Since strict complementarity condition is satisfied, we have $\bmu^*_j>0,\forall~j\in \cA(\bx^*)$. Then we have
\begin{align*}
\bA_j (\bx-\bx^*) = 0, \forall~j\in \cA(\bx^*).
\end{align*}
Then let us consider any $\by$ that satisfies
\begin{align}\label{eq:temp2}
\bA_j \by= 0, \quad \forall~j\in \cA(\bx^*).
\end{align}
First, we argue that, if the following holds
\begin{align}\label{eq:condition}
\bA_j \by = 0, \; \forall~j\in {\cA}(\bx^*), \quad \bA_i \by \le \frac{\epsilon}{2}, \; \forall~i\in \bar{\cA}(\bx^*),
\end{align}
then there must exist $\bx\in \mathcal{X}$ such that $\by = \bx - \bx^*$. 

By setting $\by = \bz - \bx^*$, for any $\bz$, we obtain
\begin{align*}
\bA_i (\bz - \bx^*) \le \epsilon/2,
\end{align*}
which implies
\begin{align*}
\bA_i \bz  \le \bA_i \bx^* +\frac{\epsilon}{2} < \bb_i
\end{align*}
where the last inequality is due to the definition of $\epsilon$.  Further, for the active set, it is clear that
\begin{align*}
\bA_j \bz = \bA_j \bx^* =\bb_j,  \; \forall~j\in {\cA}(\bx^*).
\end{align*}
Therefore $\bz$ is feasible.

So, suppose that for a given $\by$ satisfying \eqref{eq:temp2}, we cannot find any $\bx\in \mathcal{X}$ such that $\by=\bx-\bx^*$, then it must be the case that there exists a subset $\mathcal{Q} \in \bar{\cA}(\bx^*)$ such that
\begin{align*}
\bA_q \by =\theta_q >\frac{\epsilon}{2}, \; \forall~q\in \mathcal{Q}.
\end{align*}
Let us define $\theta_{\max} := \max_q\{\theta_{q}\}$, and
\begin{align}
\tilde{\by} = \frac{1}{\theta_{\max}}\frac{\epsilon}{2}\by
\end{align}
note that $\frac{1}{\theta_{\max}}\frac{\epsilon}{2}<1$.
Then for this new $\tilde{\by}$,  the following holds
\begin{align}
&\bA_j \tilde{\by}  =0, \; \forall~j\in \cA(\bx^*)\nonumber\\
&\bA_q \tilde{\by}  \le \epsilon/2, \; \forall~q\in \mathcal{Q}\nonumber\\
&\bA_j \tilde{\by}  = \frac{1}{\theta_{\max}} \frac{\epsilon}{2}\bA_j \by, \; \forall~j\in \bar{\cA}(\bx^*), j\notin \mathcal{Q}\nonumber.
\end{align}
Note that for all $j\in \bar{\cA}(\bx^*), j\notin \mathcal{Q}$, $\bA_j \by \le \epsilon/2$. We have the following two cases for those indices.

{\bf Case 1.} First, if $\bA_j \by \le 0$ then it is clear that
$$\bA_j \tilde{\by}  = \frac{1}{\theta_{\max}} \frac{\epsilon}{2}\bA_j \by \le 0.$$

{\bf Case 2.} Second, if $0\le \bA_j \by \le \epsilon/2$ then it is clear that
$$\bA_j \tilde{\by}  = \frac{1}{\theta_{\max}} \frac{\epsilon}{2}\bA_j \by \stackrel{(a)} \le  \bA_j \by \le \epsilon/2 .$$
where $(a)$ uses the fact that $\frac{1}{\theta_{\max}}\frac{\epsilon}{2}<1$, and $0\le \bA_j\by$.

Overall we have $\bA_i\tilde{\by}\le \epsilon/2, \forall~i\in\bar{\cA}(\bx^*)$, and $\bA_j\tilde{\by}=0, \; \forall~i\in\cA(\bx^*)$, i.e., condition \eqref{eq:condition} holds for $\tilde{\by}$.

Therefore, there must exist $\bx \in \mathcal{X}$ such that
$\tilde{\by} =\bx -\bx^*$. We conclude that for any $\by$ satisfying $\bA_i \by =0, \; \forall~i\in\cA(\bx^*)$, there exists a constant $\theta$ and $\bx\in \mathcal{X}$ such that $\theta \by = \bx-\bx^*$. By \eqref{eq:use}, we obtain
\begin{align}
\theta^2 \by^{\T} \nabla^2 f(\bx^*) \by \ge 0, \;\;\;\; \mbox{or equivalently,}\;\;\;\; \by^{\T} \nabla^2 f(\bx^*) \by \ge 0.
\end{align}
This direction is proved.}
\end{proof}

\subsection{Proof of Corollary 2.}
\begin{proof}
It can be easily checked that \coref{co:0eh} is true from the proof of \prref{pr:00} by letting $\by=\bx-\bx^*$ and considering $\|\by\|\le1$.
\end{proof}

\subsection{Proof of \prref{pr:continuity}}\label{sec:continuity}
\begin{proof}
Let $\bx^*$ be a limit point of the sequence $\{\bx^{(r)}\}$. By restricting to a subsequence if necessary, let us assume that $\lim_{r\rightarrow \infty} \bx^{(r)} = \bx^*$.
First notice that the function $g_{\pi} (\cdot)$ is continuous based on its definition. Therefore, $g_{\pi}(\bx^*) = \lim_{r\rightarrow \infty} g_{\pi}(\bx^r) = 0$. Therefore, \eqref{eq.cond11:exact} is satisfied at the point $\bx^*$.

In order to show \eqref{eq.cond12:exact}, let us define $\mathcal{Y}^{(r)} \triangleq \{\by \; | \; \bA'(\bx^{(r)}) \by = 0\}$ and $\mathcal{Y}^* \triangleq \{\by \; | \; \bA'(\bx^*) \by = 0\}$. We first prove that there exists an index $r'$ such that  $\mathcal{Y}^* \subseteq \mathcal{Y}^{(r)}, \;\forall r\geq r'$. To show that, first consider an inactive index  $j \in \bar{\mathcal{A}} (\bx^*)$. Clearly, $\bA_j \bx^* \neq \bb$ and therefore, there exists an index $r_j'$ such that $\bA_j \bx^{(r)} \neq \bb, \;\forall r \geq r_j'$. Thus, $j \in \bar{\mathcal{A}} (\bx^{(r)}), \;\forall r\geq r_j'$. By repeating this argument for all indices $j$ and setting $r' = \max_j \{r_j\}$, we have $ \bar{\mathcal{A}} (\bx^*) \subseteq  \bar{\mathcal{A}} (\bx^{(r)}),\;\forall r \geq r'$. Therefore, $\mathcal{A}(\bx^{(r)}) \subseteq \mathcal{A}(\bx^*),\;\forall r \geq r'$, which immediately implies that
\begin{equation}
    \mathcal{Y}^* \subseteq \mathcal{Y}^{(r)}, \;\forall r\geq r'. \label{eq:AssymptYr}
\end{equation}
Furthermore, using the definition of Exact SOSP1, we have
\begin{equation}
    \begin{split}
        -\epsilon_H^{(r)} \leq \min_{\by} \quad  &\by^{\T} \nabla^2 f(\bx^{(r)}) \by\nonumber\\
        \st \quad & \by \in   \mathcal{Y}^{(r)}. \nonumber
    \end{split}
\end{equation}
By letting $r \rightarrow \infty$ and using \eqref{eq:AssymptYr}, we obtain
\begin{equation}
    \begin{split}
        0 \leq \min_{\by} \quad  &\by^{\T} \nabla^2 f(\bx^*) \by\nonumber\\
        \st \quad & \by \in   \mathcal{Y}^*. \nonumber
    \end{split}
\end{equation}
Therefore, \eqref{eq.cond12:exact} is satisfied at the point $\bx^*$.
\end{proof}

\section{Proofs Related to the Implementation of SNAP}
\subsection{Proof of \leref{le.selection}}
\begin{proof}
{{Suppose $-q_{\pi}(\bx^{(r)})$ is chosen, and if $\alpha_{\max}$ in Algorithm is not chosen (i.e., if line 8 of Algorithm \ref{alg:p3} does not hold true). Then according to \eqref{eq.des1} and \eqref{eq.desofq} in the proof of \leref{le.2} (which can be found in Appendix \ref{sec:snap:proof}),  the minimum descent of the objective is $d_q\bydef-\frac{3}{8L_1}\|q_{\pi}(\bx^{(r)})\|^2$. This is because the lower bound of $\alpha$ is $1/(2L_1)$ when $-q_{\pi}(\bx^{(r)})$ is chosen; see the proof of \leref{le.2} for details.}

Further, if $\bv(\bx^{(r)})$ is chosen, the minimum amount of the descent is given  by \eqref{eq.des2} and \eqref{eq.derho}} 
\begin{align}
d_{\bv} \bydef\alpha ({q_{\pi}(\bx^{(r)})^{\T}\bv(\bx^{(r)})}-\left(1-\frac{\alpha L_2}{3\epsilon'_H(\delta)}\right)\frac{\alpha^2\epsilon'_H(\delta)}{2}.
\end{align}	
Since the lower bound of $\alpha$ is $3\epsilon'_H(\delta)/(8L_2)$  if $\bd^{(r)}$ is chosen by $\bv(\bx^{(r)})$, the minimum descent is
\begin{equation}
    d_{\bv}=\frac{3\epsilon'_H(\delta)}{8L_2} {q_{\pi}(\bx^{(r)})^{\T}\bv(\bx^{(r)})}-\frac{189}{1024}\frac{\epsilon'^3_H(\delta)}{L^2_2}.
\end{equation}

{If $d_q<d_{\bv}$, or equivalently  \begin{equation}
    \frac{L_1\epsilon'_H(\delta)}{2L_2}q_{\pi}(\bx^{(r)})^{\T}\bv(\bx^{(r)})-\frac{63L_1\epsilon_H'^3(\delta)}{128L^2_2}\ge-\|q_{\pi}(\bx^{(r)})\|^2
\end{equation} 
where $q_{\pi}(\bx^{(r)})^{\T}\bv(\bx^{(r)})\le0$, it implies that choosing $q_{\pi}(\bx^{(r)})$ may provide more descent of the objective value. }

\end{proof}

\subsection{Proof of \leref{le.alpha}}

\begin{proof}
Since $\bx^{(r)}$ is within the feasible set, then based on the definition of inactive set we have $\overbar{\bA}'(\bx^{(r)})\bx^{(r)}<\overbar{\bb}'$. The largest step-size along the direction $\bd^{(r)}$ is determined by the largest distance in which the boundary of the feasible solution will be touched, see \eqref{eq.alphamax}. According to the update rule of the iterate, we need
\begin{equation}
    \overbar{\bA}'(\bx^{(r)})(\bx^{(r)}+\alpha\bd^{(r)})\le\overbar{\bb}',
\end{equation}
which is equivalent to the component-wise form, i.e.,
\begin{equation}\label{eq.feasofal}
    \alpha\left(\overbar{\bA}'(\bx^{(r)})\bd^{(r)}\right)_i\le\left(\overbar{\bb}'-\overbar{\bA}'(\bx^{(r)})\bx^{(r)}\right)_i,\forall i.
\end{equation}
Due to the feasibility of $\bx^{(r)}$, and the definition of inactive set, we have $(\overbar{\bb}'-\overbar{\bA}'(\bx^{(r)})\bx^{(r)})_i>0$. Then there are two cases as follows:

\begin{enumerate}
    \item $(\overbar{\bA}'(\bx^{(r)})\bd^{(r)})_i\le0$: any $\alpha>0$ can satisfy \eqref{eq.feasofal}.
    \item $(\overbar{\bA}'(\bx^{(r)})\bd^{(r)})_i>0$: we need $\alpha\le(\overbar{\bb}'-\overbar{\bA}'(\bx^{(r)})\bx^{(r)})_i/(\overbar{\bA}'(\bx^{(r)})\bd^{(r)})_i$.  That is, going along the current direction far enough will eventually reach the boundary of the feasible set.
\end{enumerate}
Then it follows if there exists a {\it finite} step-size ${\alpha}$ so that 
\begin{align}\label{eq:reach:boundary}
(\overbar{\bA}'(\bx^{(r)})(\bx^{(r)}+{\alpha} \mathbf{d}^{(r)}))_i = \overbar{\bb}'_i,
\end{align}
 we can easily compute $\alpha^{(r)}_{\max}$ in the closed-form by \eqref{eq.closealpha}.
 \end{proof}

\section{Proofs of SNAP}\label{sec:snap:proof}
 In this section, we  show that SNAP converges to an $(\epsilon_G,\epsilon_H)$-SOSP1 in a finite number of steps. In \algref{alg:p3}, it can be observed that $\bd^{(r)}$ could be chosen by projected gradient $q_{\pi}(\bx^{(r)})$ or negative curvature $\bv(\bx^{(r)})$. Using the line search algorithm ensures that the iterates stay in the feasible set. When $\alpha^{(r)}_{\max}$ is chosen by \eqref{eq.alphamax}, the objective function will not increase. When $\alpha^{(r)}_{\max}$ is not chosen by \eqref{eq.alphamax}, we will have a sufficient descent. We will give the following three lemmas that quantify the minimum decrease of the objective value by implementing one step of the algorithm, i.e., $\bx^{(r+1)}=\bx^{(r)}+\alpha^{(r)}\bd^{(r)}$. They serve as the stepping stones for the main result that follows.

The descent Lemma of PGD is given by the following.

\subsection{Descent Lemmas}
\begin{lemma}\label{le.1}
If $\bx^{(r+1)}$ is computed by projected gradient descent with step-size chosen by $1/L_1$, then $f(\bx^{(r+1)})\le f(\bx^{(r)})-\frac{\epsilon^2_G}{18L_1}$.
\end{lemma}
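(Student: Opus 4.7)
The plan is to invoke the classical Lipschitz-gradient descent inequality together with the first-order optimality condition of the projection, under the implicit hypothesis that PGD is actually being run, i.e., $\|g_\pi(\bx^{(r)})\| > \epsilon_G$ (from line 3 of SNAP, PGD in line 21 is executed only when the FOSP1 gap test fails).

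First I would write $\bx^{(r+1)} = \pi_{\mathcal{X}}\bigl(\bx^{(r)} - \tfrac{1}{L_1}\nabla f(\bx^{(r)})\bigr)$ and apply the variational inequality characterizing the projection, with test point $\by=\bx^{(r)}\in\mathcal{X}$, to obtain
\begin{equation*}
\bigl\langle \bx^{(r+1)} - \bx^{(r)} + \tfrac{1}{L_1}\nabla f(\bx^{(r)}),\, \bx^{(r)} - \bx^{(r+1)}\bigr\rangle \ge 0,
\end{equation*}
which rearranges to $\langle \nabla f(\bx^{(r)}), \bx^{(r+1)} - \bx^{(r)}\rangle \le -L_1\|\bx^{(r+1)} - \bx^{(r)}\|^2$. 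Combining this with the $L_1$-gradient Lipschitz descent inequality
\begin{equation*}
f(\bx^{(r+1)}) \le f(\bx^{(r)}) + \langle \nabla f(\bx^{(r)}), \bx^{(r+1)} - \bx^{(r)}\rangle + \tfrac{L_1}{2}\|\bx^{(r+1)} - \bx^{(r)}\|^2
\end{equation*}
yields the clean per-iteration decrease $f(\bx^{(r+1)}) \le f(\bx^{(r)}) - \tfrac{L_1}{2}\|\bx^{(r+1)} - \bx^{(r)}\|^2$.

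It remains to translate $\|\bx^{(r+1)}-\bx^{(r)}\|$ into $\|g_\pi(\bx^{(r)})\|$ and then invoke the hypothesis $\|g_\pi(\bx^{(r)})\| > \epsilon_G$. I expect this final bookkeeping step to be the main obstacle, because the constant $\alpha$ appearing in the definition $g_\pi(\bx)\bydef\tfrac{1}{\alpha}\bigl(\pi_{\mathcal{X}}(\bx-\alpha\nabla f(\bx))-\bx\bigr)$ is a generic fixed constant that need not coincide with the PGD step-size $1/L_1$. When the two do coincide, one immediately has $\|\bx^{(r+1)}-\bx^{(r)}\| = \tfrac{1}{L_1}\|g_\pi(\bx^{(r)})\|$ and the descent is $-\tfrac{1}{2L_1}\|g_\pi(\bx^{(r)})\|^2 \le -\tfrac{\epsilon_G^2}{2L_1}$, which is already stronger than $-\epsilon_G^2/(18L_1)$.

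To cover the case of a mismatched $\alpha$, I would appeal to the standard monotonicity properties of the proximal/projected-gradient map (e.g., Gafni--Bertsekas): for convex $\mathcal{X}$, the map $\alpha \mapsto \|\pi_{\mathcal{X}}(\bx-\alpha\nabla f(\bx))-\bx\|/\alpha$ is non-increasing while $\alpha \mapsto \|\pi_{\mathcal{X}}(\bx-\alpha\nabla f(\bx))-\bx\|$ is non-decreasing. These two inequalities provide a two-sided comparison between $\|g_\pi(\bx^{(r)})\|$ evaluated at $\alpha$ and the PGD step-size $1/L_1$, and combining them with the displayed descent $-\tfrac{L_1}{2}\|\bx^{(r+1)}-\bx^{(r)}\|^2$ produces a per-iteration decrease of the form $-\epsilon_G^2/(cL_1)$ for an absolute constant $c$; tracking the worst-case ratios between the two step-sizes yields the stated constant $c=18$. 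With the resulting inequality in hand, the conclusion $f(\bx^{(r+1)}) \le f(\bx^{(r)}) - \epsilon_G^2/(18L_1)$ follows directly from $\|g_\pi(\bx^{(r)})\| > \epsilon_G$.
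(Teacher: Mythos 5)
Your core argument is exactly the paper's: the projection variational inequality with test point $\bx^{(r)}$ gives $\langle\nabla f(\bx^{(r)}),\bx^{(r+1)}-\bx^{(r)}\rangle\le-\tfrac{1}{\alpha_\pi}\|\bx^{(r+1)}-\bx^{(r)}\|^2$, which combined with the $L_1$-descent inequality yields $f(\bx^{(r+1)})\le f(\bx^{(r)})-\tfrac{L_1}{2}\|\bx^{(r+1)}-\bx^{(r)}\|^2$; the paper does precisely this. The only divergence is the final bookkeeping relating $\|\bx^{(r+1)}-\bx^{(r)}\|$ to $\|g_\pi(\bx^{(r)})\|$. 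The paper implicitly identifies the $\alpha$ in the definition of $g_\pi$ with the PGD step-size $\alpha_\pi=1/L_1$ and then, instead of using the resulting identity $\|g_\pi(\bx^{(r)})\|=L_1\|\bx^{(r+1)}-\bx^{(r)}\|$, runs a triangle-inequality-plus-nonexpansiveness chain to get the looser bound $\|g_\pi(\bx^{(r)})\|\le(2/\alpha_\pi+L_1)\|\bx^{(r+1)}-\bx^{(r)}\|=3L_1\|\bx^{(r+1)}-\bx^{(r)}\|$, which is exactly where the factor $18=2\cdot 3^2$ comes from. Your matched-step-size argument, giving descent at least $\epsilon_G^2/(2L_1)$, is correct and strictly stronger, so it certainly implies the stated lemma under the same reading the paper uses.

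Where your proposal overreaches is the mismatched-$\alpha$ case. The Gafni--Bertsekas monotonicity facts you quote are correct, but they cannot deliver a universal constant such as $18$ for an arbitrary fixed $\alpha$ in the definition of $g_\pi$. If $\alpha\ge 1/L_1$, monotonicity of $\|\pi_{\mathcal{X}}(\bx-\alpha\nabla f(\bx))-\bx\|/\alpha$ indeed gives $\|g_\pi(\bx^{(r)})\|\le L_1\|\bx^{(r+1)}-\bx^{(r)}\|$ and you recover descent $\epsilon_G^2/(2L_1)$. But if $\alpha<1/L_1$, the only usable direction is $\|\bx^{(r+1)}-\bx^{(r)}\|\ge\|\pi_{\mathcal{X}}(\bx^{(r)}-\alpha\nabla f(\bx^{(r)}))-\bx^{(r)}\|=\alpha\|g_\pi(\bx^{(r)})\|$, which yields descent at least $\tfrac{L_1\alpha^2\epsilon_G^2}{2}$; this degrades like $\alpha^2$ and is weaker than $\epsilon_G^2/(18L_1)$ whenever $\alpha<1/(3L_1)$, so ``tracking worst-case ratios'' cannot produce the constant $18$ independently of $\alpha$. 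This is not fatal to the lemma as the paper states and uses it, since the paper's own proof (and the algorithm, whose input fixes $\alpha_\pi=1/L_1$) takes the gap and the step to be computed with the same step-size; but you should either make that identification explicit or restrict the mismatched case to $\alpha\ge 1/(3L_1)$ rather than claim the constant $18$ in full generality.
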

\begin{proof}
The proof follows the classic theory of the projected gradient descent. According to the optimality condition of the projection, we have
\begin{equation}
\left\langle \bx^{(r+1)}-(\bx^{(r)}-\alpha_\pi\nabla f(\bx^{(r)})),\bx-\bx^{(r+1)}\right\rangle\ge0\quad \bx\in\mathcal{X}.
\end{equation}
Applying this relation with $\bx=\bx^{(r)}$, we obtain
\begin{equation}
\left\langle\nabla f(\bx^{(r)}),\bx^{(r+1)}-\bx^{(r)}\right\rangle\le-\frac{1}{\alpha_\pi}\|\bx^{(r+1)}-\bx^{(r)}\|^2.
\end{equation}
	
According to $L_1$-Lipschitz continuity, we have
\begin{equation}
f(\bx^{(r+1)})-f(\bx^{(r)})\le\nabla f(\bx^{(r)})^{\T}(\bx^{(r+1)}-\bx^{(r)})+\frac{L_1}{2}\|\bx^{(r+1)}-\bx^{(r)}\|^2,
\end{equation}
where
\begin{equation}
\bx^{(r+1)}=\pi_{\mathcal{X}}(\bx^{(r)}-\alpha_\pi\nabla f(\bx^{(r)})).
\end{equation}
	
Then, we have
\begin{equation}
f(\bx^{(r+1)})\le f(\bx^{(r)})+\left(\frac{L_1}{2}-\frac{1}{\alpha_\pi}\right)\|\bx^{(r+1)}-\bx^{(r)}\|^2,
\end{equation}
where $0<\alpha_\pi\le 1/L_1$, implying
\begin{align}
\notag
f(\bx^{(r+1)})\le & f(\bx^{(r)})-\frac{L_1}{2}\|\bx^{(r+1)}-\bx^{(r)}\|^2
\\\notag
\mathop{\le}\limits^{(a)}&f(\bx^{(r)})-\frac{L_1}{2(\frac{2}{\alpha_{\pi}}+L_1)^2}\|g_{\pi}(\bx^{(r)})\|^2
\\\notag
\mathop{\le}\limits^{(b)}&f(\bx^{(r)})-\frac{\epsilon^2_G}{18L_1}
\label{eq.objpgd}
\end{align}
where in $(a)$ we use the nonexpansiveness of the projection operator,
and the details are as follows:
\begin{align}\notag
\|g_{\pi}(\bx^{(r)})\|=&\frac{1}{\alpha_{\pi}}\|\pi_{\mathcal{X}}(\bx^{(r)}-\alpha_{\pi}\nabla f(\bx^{(r)}))-\bx^{(r)}\|
\\\notag
=&\frac{1}{\alpha_{\pi}}\|\pi_{\mathcal{X}}(\bx^{(r)}-\alpha_{\pi}\nabla f(\bx^{(r)}))-\bx^{(r+1)}+\bx^{(r+1)}-\bx^{(r)}\|
\\\notag
\le&\frac{1}{\alpha_{\pi}}\|\bx^{(r+1)}-\bx^{(r)}\|+\frac{1}{\alpha_{\pi}}\|\bx^{(r+1)}-\pi_{\mathcal{X}}(\bx^{(r)}-\alpha_{\pi}\nabla f(\bx^{(r)}))\|
\\\notag
=&\frac{1}{\alpha_{\pi}}\|\bx^{(r+1)}-\bx^{(r)}\|+\frac{1}{\alpha_{\pi}}\|\pi_{\mathcal{X}}(\bx^{(r+1)}-\alpha_{\pi}\nabla f(\bx^{(r+1)}))-\pi_{\mathcal{X}}(\bx^{(r)}-\alpha_{\pi}\nabla f(\bx^{(r)}))\|
\\\notag
\le&\frac{2}{\alpha_{\pi}}\|\bx^{(r+1)}-\bx^{(r)}\|+\|\nabla f(\bx^{(r+1)})-\nabla f(\bx^{(r)})\|
\\
\le&\left(\frac{2}{\alpha_{\pi}}+L_1\right)\|\bx^{(r+1)}-\bx^{(r)}\|;
\end{align}
in $(b)$ we take $\alpha_{\pi}=1/L_1$.

From \eqref{eq.objpgd}, we have the sufficient descent of the objective value if the constant step-size is used.
\end{proof}

\begin{lemma}\label{le.2}
If $\bd^{(r)}$ is chosen as $-q_{\pi}(\bx^{(r)})$ and $\bx^{(r+1)}$ is computed by the NCD step of \algref{alg:p1}, {and if $\alpha^{(r)}_{\max}$ is not selected by the line search algorithm,} then {the line search algorithm terminates with} $\alpha\ge1/(2L_1)$ and a descent of the following can be achieved $f(\bx^{(r+1)})\le f(\bx^{(r)})-{0.18\epsilon'^3_H(\delta)/L^2_2}$.
\end{lemma}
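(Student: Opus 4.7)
The plan is to split the proof into two pieces: first show that the backtracking loop in \algref{alg:p3} terminates with an accepted step-size $\alpha^{(r)} \geq 1/(2L_1)$, and then combine that step-size bound with the branch-selection condition (line 8 of \algref{alg:p1}) to extract the claimed objective drop. The key algebraic input is the $L_1$-gradient-Lipschitz inequality \eqref{eq.des1}, which after substituting $\bd^{(r)} = -q_\pi(\bx^{(r)})$ becomes
\begin{equation*}
f(\bx^{(r)} + \alpha\bd^{(r)}) \le f(\bx^{(r)}) - \alpha\|q_\pi(\bx^{(r)})\|^2 + \tfrac{\alpha^2 L_1}{2}\|q_\pi(\bx^{(r)})\|^2.
\end{equation*}

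\emph{Step-size lower bound.} Comparing the displayed bound with the acceptance test \eqref{eq.back} under $\rho(\alpha) = -\alpha\|q_\pi(\bx^{(r)})\|^2$ from \eqref{eq.defofrho}, the test is automatically met whenever $\alpha \le 1/L_1$. By hypothesis $\alpha^{(r)}_{\max}$ is \emph{not} selected, so $f(\bx^{(r)} + \alpha^{(r)}_{\max}\bd^{(r)}) \ge f(\bx^{(r)})$; the quadratic bound then forces $\alpha^{(r)}_{\max} > 1/L_1$, since otherwise the displayed inequality would yield strict descent. Successive halvings of $\alpha$ starting from $\alpha^{(r)}_{\max}$ place the first admissible iterate in $(1/(2L_1),\,1/L_1]$, so the loop exits with $\alpha^{(r)} \geq 1/(2L_1)$.

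\emph{From step size to explicit descent.} Plugging $\alpha^{(r)} \in [1/(2L_1),\,1/L_1]$ back into the quadratic bound, the minimum decrease is $\alpha^{(r)}(1 - \alpha^{(r)}L_1/2)\|q_\pi(\bx^{(r)})\|^2$; this quantity is monotone increasing in $\alpha$ on the interval and attains its minimum $3/(8L_1)$ at $\alpha = 1/(2L_1)$, giving
\begin{equation*}
f(\bx^{(r+1)}) \le f(\bx^{(r)}) - \tfrac{3}{8L_1}\|q_\pi(\bx^{(r)})\|^2.
\end{equation*}
Next, since line 8 of \algref{alg:p1} selected $-q_\pi(\bx^{(r)})$ over $\bv(\bx^{(r)})$,
\begin{equation*}
\tfrac{L_1\epsilon'_H(\delta)}{L_2} q_\pi(\bx^{(r)})^{\T}\bv(\bx^{(r)}) - \tfrac{63 L_1\epsilon'^3_H(\delta)}{128 L_2^2} \ge -\|q_\pi(\bx^{(r)})\|^2;
\end{equation*}
combined with $q_\pi(\bx^{(r)})^{\T}\bv(\bx^{(r)}) \le 0$ (enforced in line 7), this yields $\|q_\pi(\bx^{(r)})\|^2 \ge 63 L_1\epsilon'^3_H(\delta)/(128 L_2^2)$. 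Inserting this bound into the descent estimate produces $f(\bx^{(r+1)}) - f(\bx^{(r)}) \le -\tfrac{189}{1024}\,\epsilon'^3_H(\delta)/L_2^2$, and $189/1024 > 0.18$ closes the claim.

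The main obstacle I foresee is the edge case analysis around $\alpha^{(r)}_{\max}$: \algref{alg:p3} overloads it as either the boundary-touching value \eqref{eq.alphamax} or the fallback $1/L_1$, and the inequality $\alpha^{(r)}_{\max} > 1/L_1$ under the rejection hypothesis has to be justified on each branch (the fallback branch in fact cannot be rejected at all under the quadratic upper bound, so only the true-boundary case needs care). After this is pinned down, the rest is elementary quadratic algebra combined with a direct invocation of the line-8 selection rule of \algref{alg:p1}.
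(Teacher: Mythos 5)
Your proposal is correct and follows essentially the same route as the paper's own proof: the rejection of $\alpha^{(r)}_{\max}$ together with the $L_1$ quadratic bound forces $\alpha^{(r)}_{\max}>1/L_1$, halving then terminates with an accepted $\alpha\ge 1/(2L_1)$ and descent at least $\tfrac{3}{8L_1}\|q_{\pi}(\bx^{(r)})\|^2$, and the line-8 selection rule with $q_{\pi}(\bx^{(r)})^{\T}\bv(\bx^{(r)})\le 0$ gives $\|q_{\pi}(\bx^{(r)})\|^2\ge \tfrac{63L_1\epsilon_H'^3(\delta)}{128L_2^2}$, so the constant $189/1024\ge 0.18$ follows exactly as in the paper. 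The only cosmetic slip (also glossed over in the paper) is the assertion that the first accepted step lies in $(1/(2L_1),1/L_1]$ — the test \eqref{eq.back} could already fail (i.e., acceptance could occur) at some $\alpha>1/L_1$ — but in that case the acceptance inequality itself yields descent at least $\tfrac{\alpha}{2}\|q_{\pi}(\bx^{(r)})\|^2>\tfrac{3}{8L_1}\|q_{\pi}(\bx^{(r)})\|^2$, so the stated bound is unaffected.
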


\begin{proof}
If $\bd^{(r)}=-q_{\pi}(\bx^{(r)})$ in line 9 of \algref{alg:p1}, then the proof follows the classical gradient descent algorithm with the Armijo rule, showing that the objective values obtained by PGD achieves sufficient descent when $\alpha$ is small. \mhcomment{below how are the two cases are devided? the first case is when $\alpha$ can go very far, the second is when $\alpha_{\max}$ is chosen?}

First, according to the $L_1$-Lipschitz continuity, we have
\begin{align}
\notag
f(\bx^{(r+1)})=&f\left(\bx^{(r)}-\alpha q_{\pi}(\bx^{(r)})\right)
\\\notag
\mathop{\le}\limits^{(a)}& f(\bx^{(r)})-\alpha \nabla f(\bx^{(r)})^{\T}q_{\pi}(\bx^{(r)})+\frac{\alpha^2}{2}L_1\|q_{\pi}(\bx^{(r)})\|^2
\\\notag
\mathop{=}\limits^{(b)}& f(\bx^{(r)})-\alpha q_{\pi}(\bx^{(r)})^{\T}q_{\pi}(\bx^{(r)})+\frac{\alpha^2}{2}L_1\|q_{\pi}(\bx^{(r)})\|^2
\\\notag
=& f(\bx^{(r)})-\alpha\|q_{\pi}(\bx^{(r)})\|^2+\frac{\alpha^2}{2}L_1\|q_{\pi}(\bx^{(r)})\|^2
\\
=&f(\bx^{(r)}) -\left(\alpha-\frac{\alpha^2}{2}L_1\right)\|q_{\pi}(\bx^{(r)})\|^2\label{eq:descent}
\end{align}
where in $(a)$ we use the gradient Lipschitz continuity; $(b)$ is true because $\nabla f(\bx^{(r)})=\bP(\bx^{(r)})\nabla f(\bx^{(r)})+\bP_\perp(\bx^{(r)})\nabla f(\bx^{(r)})$ and $q_{\pi}(\bx^{(r)})=\bP(\bx^{(r)})\nabla f(\bx^{(r)})$. It can be observed that there must exist a small $\alpha$ such that the objective is decreased, so the line search algorithm will be terminated within finite number of steps. \mhcomment{specific, which condition is satisfied? about the $\rho$..}
	
Second, by the definition of $\alpha^{(r)}_{\max}$, we know that along the direction $-q_{\pi}(\bx^{(r)})$, one can go with a step of length at least $\alpha^{(r)}_{\max}$ without hitting the boundary. Then we can determine a lower bound of $\alpha^{(r)}_{\max}$ as follows. We divide this analysis into two steps. \mhcomment{not clear what you want to show here. You want to say that, if the algorithm enters line 11 of the line search, then the intiail $\alpha^r_{\max}$ must be lower bounded xxxx. }

\noindent {\bf Step (a)} Suppose that $\alpha^{(r)}_{\max}$ does meet the criteria \eqref{eq.back}, \mhedit{that is
\begin{align}
f(\bx^{(r)}+\alpha^{(r)}_{\max} \bd^{(r)}) > f(\bx^{(r)})+{\frac{1}{2}}\rho(\alpha^{(r)}_{\max}).
\end{align}}
Then we have $\alpha^{(r)}_{\max}\ge \frac{1}{L_1}$, because otherwise,
\begin{align}
 \alpha^{(r)}_{\max} \le \frac{1}{L_1} \Rightarrow \left(\alpha^{(r)}_{\max} - \frac{(\alpha^{(r)}_{\max})^2 L_1}{2}\right)\ge \frac{1}{2} \alpha^{(r)}_{\max}.
\end{align}
The above fact combined with the descent estimate \eqref{eq:descent} implies that	\eqref{eq.back}  stops to hold true, which is a contradiction.

\noindent  {\bf Step (b)} Suppose that $\alpha^{(r)}_{\max}$ does not meet the criteria \eqref{eq.back}. However this would imply that line 8 of the line search algorithm will hold, so the algorithm has already returned -- again a contradiction.
Therefore we conclude that the initial stepsize $\alpha^{(r)}_{\max}$ is lower bounded by $1/L_1$.

By the backtracking algorithm from \eqref{eq.back}, we can  find an $\alpha$ that is at least $1/(2L_1)$ such that
\begin{align}
f(\bx^{(r+1)})\le & f(\bx^{(r)})-\frac{3}{8L_1}\|q_{\pi}(\bx^{(r)})\|^2\label{eq.desofq}
\\
\mathop{\le}\limits^{(a)} & f(\bx^{(r)})- {0.18\frac{\epsilon'^3_H(\delta)}{L^2_2},}
\end{align}
where in $(a)$  we use $\|q_{\pi}(\bx^{(r)})\|^2\ge 63L_1\epsilon'^3_H(\delta)/(128L^2_2)$ since in line 8 of \algref{alg:p1} we know from the algorithm that $-q_{\pi}(\bx^{(r)})$ is chosen when { $$-q_{\pi}(\bx^{(r)})^{\T}\bv(\bx^{(r)})\frac{L_1\epsilon'_H(\delta)}{L_2}+\frac{63L_1\epsilon_H'^3(\delta)}{128L^2_2}\le\|q_{\pi}(\bx^{(r)})\|^2, \quad q_{\pi}(\bx^{(r)})^{\T}\bv(\bx^{(r)})\le 0.$$}
This completes the proof.
\end{proof}

\begin{lemma}\label{le.3}
If $\bd^{(r)}$ is chosen by $\bv(\bx^{(r)})$,  $\bx^{(r+1)}$ is computed by the NCD procedure in \algref{alg:p1} {and $\alpha^{(r)}_{\max}$ is not selected, then {the line search algorithm terminates with} $\alpha\ge 9\epsilon'_H(\delta)/(4L_2)$ and a descent of the following can be achieved: $f(\bx^{(r+1)})\le f(\bx^{(r)})-0.06\epsilon'^3_H(\delta)/L^2_2$.}
\end{lemma}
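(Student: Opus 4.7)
\textbf{Proof Proposal for Lemma \ref{le.3}.} The argument should closely parallel that of Lemma \ref{le.2}, with the $L_1$-gradient Lipschitz descent estimate replaced by the $L_2$-Hessian Lipschitz cubic descent estimate \eqref{eq.des2}, and the quadratic ``Armijo'' quantity $\rho(\alpha)=-\alpha\|q_\pi(\bx^{(r)})\|^2$ replaced by the cubic quantity $\rho(\alpha)=-\alpha^2\epsilon'_H(\delta)/4$ from \eqref{eq:alpha}. The plan has three blocks: (i) produce a one-step descent estimate along $\bv(\bx^{(r)})$, (ii) lower-bound the step-size at which the backtracking loop in Algorithm~\ref{alg:p3} terminates, and (iii) plug this step-size bound into the sufficient-descent condition \eqref{eq.back}.

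First, I would instantiate \eqref{eq.des2} at $\bd^{(r)}=\bv(\bx^{(r)})$. The three properties of the oracle \textsf{Negative-Eigen-Pair}, together with the selection rule in line 7 of Algorithm~\ref{alg:p1}, give $\|\bv(\bx^{(r)})\|=1$, $\bv(\bx^{(r)})^{\T}\nabla^2 f(\bx^{(r)})\bv(\bx^{(r)})\le -\epsilon'_H(\delta)$, and $q_\pi(\bx^{(r)})^{\T}\bv(\bx^{(r)})\le 0$. Substituting these into \eqref{eq.des2} yields
\[
f(\bx^{(r)}+\alpha\bv(\bx^{(r)}))\le f(\bx^{(r)})-\frac{\alpha^2\epsilon'_H(\delta)}{2}+\frac{\alpha^3 L_2}{6}.
\]
Comparing this with the backtracking target $f(\bx^{(r)})+\tfrac12\rho(\alpha)=f(\bx^{(r)})-\alpha^2\epsilon'_H(\delta)/8$, I see that \eqref{eq.back} is violated (i.e., the exit condition is met) whenever $\alpha^3 L_2/6 \le 3\alpha^2\epsilon'_H(\delta)/8$, equivalently $\alpha\le 9\epsilon'_H(\delta)/(4L_2)$. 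This is the key threshold $\alpha^\star\triangleq 9\epsilon'_H(\delta)/(4L_2)$.

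Next, I would argue that the step-size returned by the line search cannot be much smaller than $\alpha^\star$, following the two-case template of Lemma~\ref{le.2}. If the loop exits at $\alpha=\alpha^{(r)}_{\max}$ (no halving), then the hypothesis that $\alpha^{(r)}_{\max}$ is \emph{not} the accepted update (i.e., \eqref{eq:stop} failed) combined with the cubic descent estimate forces $\alpha^{(r)}_{\max}$ to be at least a constant multiple of $\epsilon'_H(\delta)/L_2$; otherwise the descent would already have been nonnegative, contradicting entry into backtracking. If the loop halved at least once, then the previous step $2\alpha$ failed \eqref{eq.back}, and plugging $2\alpha$ into the estimate above gives $2\alpha>\alpha^\star$, so $\alpha$ stays within a fixed fraction of $\alpha^\star$. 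In either case $\alpha\ge 9\epsilon'_H(\delta)/(4L_2)$ (up to the factor implicit in the halving step, which absorbs into the stated constants).

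Finally, I would substitute this lower bound on $\alpha$ into the sufficient-descent guarantee that holds at exit, $f(\bx^{(r+1)})\le f(\bx^{(r)})-\alpha^2\epsilon'_H(\delta)/8$, to obtain a descent of order $\epsilon'^3_H(\delta)/L_2^2$. Carrying the constant $9/(4L_2)$ through yields exactly the numerical prefactor $0.06$ claimed in the lemma. The main obstacle is the careful bookkeeping in block (ii): one must verify that the halving rule in \eqref{eq.shink} does not overshoot below $\alpha^\star$ by more than a controlled factor, and that the boundary-hitting branch (when $\alpha^{(r)}_{\max}$ comes from \eqref{eq.closealpha}) is correctly excluded by the assumption ``$\alpha^{(r)}_{\max}$ is not selected.'' Everything else is a mechanical substitution into the cubic Taylor bound.
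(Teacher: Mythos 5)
Your proposal follows essentially the same route as the paper's proof: instantiate the cubic estimate \eqref{eq.des2} with the oracle properties ($\|\bv(\bx^{(r)})\|=1$, $\bv^{\T}\nabla^2 f\,\bv\le-\epsilon'_H(\delta)$, $q_\pi^{\T}\bv\le 0$), identify the threshold $\alpha\le 9\epsilon'_H(\delta)/(4L_2)$ below which the backtracking test must pass, use the ``$\alpha^{(r)}_{\max}$ not selected'' hypothesis (Lemma~\ref{le.2}-style) to show the starting step exceeds this threshold, and plug the resulting step-size lower bound into the sufficient-descent condition. The only deviations are harmless constant bookkeeping: the terminating step is really only guaranteed to exceed $9\epsilon'_H(\delta)/(8L_2)$ (your ``no-halving'' case cannot occur once \eqref{eq:stop} fails, since the while loop then necessarily runs), which via the exit condition gives descent at least $\tfrac{81}{512}\,\epsilon'^3_H(\delta)/L_2^2\ge 0.06\,\epsilon'^3_H(\delta)/L_2^2$, whereas the paper uses the weaker bound $\alpha\ge 3\epsilon'_H(\delta)/(8L_2)$ and the Taylor estimate to obtain $\tfrac{63}{1024}\approx 0.06$ — so the claimed descent holds either way.
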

\begin{proof}
If $\bd^{(r)}=\bv(\bx^{(r)})$ in line 11 of \algref{alg:p1}, according to the $L_2$-Lipschitz continuity, we have
\begin{align}
\notag
&f(\bx^{(r)}+\alpha \bv(\bx^{(r)}))
\\\notag
\le & f(\bx^{(r)})+\alpha q_{\pi}(\bx^{(r)})^{\T}\bv(\bx^{(r)})+\frac{\alpha^2}{2}\bv(\bx^{(r)})^{\T}\nabla^2 f(\bx^{(r)})\bv(\bx^{(r)})+\frac{\alpha^3}{6}L_2 \|\bv(\bx^{(r)})\|^3
\\
= &f(\bx^{(r)})+\alpha q_{\pi}(\bx^{(r)})^{\T}\bv(\bx^{(r)})+\frac{\alpha^2}{2}\bv(\bx^{(r)})^{\T}\nabla^2 f(\bx^{(r)})\bv(\bx^{(r)})+\frac{\alpha^3}{6}L_2,\label{eq.desv}
\end{align}
where we used $\|\bv(\bx^{(r)})\|=1$.
Since $\bv(\bx^{(r)})^{\T}\nabla^2 f(\bx^{(r)})\bv(\bx^{(r)})\le-\epsilon'_H(\delta)$ and $q_{\pi}(\bx^{(r)})^{\T}\bv(\bx^{(r)})\le0$,  we know that
\begin{equation}\label{eq.vdes}
\alpha q_{\pi}(\bx^{(r)})^{\T}\bv(\bx^{(r)})+\frac{\alpha^2}{2}\bv(\bx^{(r)})^{\T}\nabla^2 f(\bx^{(r)})\bv(\bx^{(r)})\le-\frac{\alpha^2\epsilon'_H(\delta)}{2}<0.
\end{equation}

Then, combining \eqref{eq.desv} and  \eqref{eq.vdes} we obtain
\begin{align}\label{eq.derho}
f(\bx^{(r)}+\alpha \bv(\bx^{(r)}))-f(\bx^{(r)})
\mathop{\le}&-\left(1-\frac{\alpha L_2}{3\epsilon'_H(\delta)}\right)\frac{\alpha^2\epsilon'_H(\delta)}{2}.
\end{align}
{It follows that when choosing $0<\alpha<\frac{3\epsilon'_H(\delta)}{L_2}$, the objective function is decreasing. By using the similar argument as in the previous lemma (by applying criteria \eqref{eq.back}), we can conclude that
\begin{align}
\alpha^{(r)}_{\max}>\frac{9\epsilon'_H(\delta)}{4 L_2}.
\end{align}}
{Finally, it is easy to see that by using the backtracking line search where each time the step-size is shrank by $1/2$, the algorithm will stop at $\alpha\ge 3\epsilon'_H(\delta)/(8L_2)$, therefore we will have at least the following amount of descent:} 
\begin{align}\notag
f(\bx^{(r+1)})\le & f(\bx^{(r)})- \frac{7}{8}\frac{(\frac{3}{8})^2\epsilon'^3_H(\delta)}{2L^2_2}\le  f(\bx^{(r)})-0.06\frac{\epsilon'^3_H(\delta)}{L^2_2},
\end{align}
which completes the proof.
\end{proof}

\begin{lemma}\label{le.4}
Consider Algorithm 1. The algorithm will stop if for $\min\{d,m\}$ consecutive iterations, its line search procedure only returns with stepsize $\alpha^{(r)}_{\max}$ chosen as in \eqref{eq.alphamax}. 
\end{lemma}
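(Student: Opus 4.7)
The proof would hinge on the observation that each iteration in which the line search returns with stepsize $\alpha^{(r)}_{\max}$ drawn from \eqref{eq.alphamax} must \emph{strictly} enlarge the rank of the active constraint matrix $\bA'(\bx^{(r)})$ by at least one. The argument then proceeds by bounding the number of such rank increases.

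First, I would show that whenever $\alpha^{(r)}_{\max}$ is selected according to \eqref{eq.alphamax}, the active set grows: $\cA(\bx^{(r)}) \subsetneq \cA(\bx^{(r+1)})$. Because the direction $\bd^{(r)}$ (whether $-q_\pi(\bx^{(r)})$ or $\bv(\bx^{(r)})$) is constructed to lie in $\mathcal{F}(\bx^{(r)}) = \mathsf{Null}(\bA'(\bx^{(r)}))$, every previously active constraint remains active, i.e., $\bA_j \bx^{(r+1)} = \bA_j \bx^{(r)} = \bb_j$ for all $j \in \cA(\bx^{(r)})$. Moreover, by the definition of $\alpha^{(r)}_{\max}$ via \eqref{eq.closealpha} and \leref{le.alpha}, there exists some index $i \in \overbar{\cA}(\bx^{(r)})$ with $\bA_i \bd^{(r)} > 0$ such that $\bA_i \bx^{(r+1)} = \bb_i$, so $i$ is newly active.

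Next, I would show that the new row $\bA_i$ is linearly independent of $\{\bA_j : j \in \cA(\bx^{(r)})\}$. Suppose, for contradiction, that $\bA_i = \sum_{j \in \cA(\bx^{(r)})} c_j \bA_j$ for some scalars $c_j$. Then, since $\bd^{(r)} \in \mathsf{Null}(\bA'(\bx^{(r)}))$, we obtain
\begin{equation}
\bA_i \bd^{(r)} = \sum_{j \in \cA(\bx^{(r)})} c_j\, \bA_j \bd^{(r)} = 0,
\end{equation}
contradicting $\bA_i \bd^{(r)} > 0$. Hence $\mathrm{rank}(\bA'(\bx^{(r+1)})) \ge \mathrm{rank}(\bA'(\bx^{(r)})) + 1$. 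This is the main technical step and, in my view, the key subtlety of the proof: the strict feasibility-hitting condition $(\overbar{\bA}'(\bx^{(r)})\bd^{(r)})_i > 0$ is precisely what rules out linear dependence.

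Having this rank-increase property, I would iterate it over $K := \min\{d,m\}$ consecutive iterations of the prescribed type, yielding
\begin{equation}
\mathrm{rank}(\bA'(\bx^{(r+K)})) \ge \mathrm{rank}(\bA'(\bx^{(r)})) + K \ge \min\{d,m\}.
\end{equation}
Since $\mathrm{rank}(\bA'(\bx)) \le \min\{d,m\}$ for any $\bx$, the rank saturates. I would then conclude with a short case analysis: if $d \le m$, then $\mathcal{F}(\bx^{(r+K)}) = \{0\}$, so the \textsl{Negative-Eigen-Pair} oracle cannot produce a unit-norm vector in the free space satisfying the curvature requirement, forcing $\textsf{flag} = \emptyset$ and the algorithm to terminate at line 18 of \algref{alg:p1}; if $d > m$, then all constraints are active at $\bx^{(r+K)}$, so $\overbar{\cA}(\bx^{(r+K)}) = \emptyset$ and condition \eqref{eq.closalpha} cannot be triggered in any subsequent iteration, so the line search must fall into the ``else'' branch with $\alpha^{(r+K)}_{\max} = 1/L_1$, breaking the consecutive streak. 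In both scenarios the algorithm cannot prolong the ``$\alpha^{(r)}_{\max}$ from \eqref{eq.alphamax}'' mode beyond $\min\{d,m\}$ iterations.
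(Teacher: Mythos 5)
Your proposal is correct and follows essentially the same route as the paper: track the active set along the NCD directions (previously active constraints stay active since $\bd^{(r)}\in\mathsf{Null}(\bA'(\bx^{(r)}))$), argue that each line-search return with $\alpha^{(r)}_{\max}$ from \eqref{eq.alphamax} strictly reduces the free-space dimension (equivalently, raises $\mathrm{rank}(\bA'(\bx^{(r)}))$) by at least one, and conclude that such steps can occur at most $\min\{d,m\}$ consecutive times. Your write-up is in fact slightly more complete than the paper's: the explicit contradiction showing the newly hit row $\bA_i$ is linearly independent of the active rows (using $\bA_i\bd^{(r)}>0$), and the terminal case split ($d\le m$: trivial free space forces the \textsl{Negative-Eigen-Pair} oracle to return $\emptyset$; $d>m$: $\overbar{\cA}=\emptyset$ so \eqref{eq.closalpha} can no longer trigger) spell out steps the paper only asserts.
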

\begin{proof}
First, we show that $\dim(\mathcal{\bx}^{(r)})$ is not increasing if $\bx^{(r)}$ is updated by NCD successviely. Since at the $r$th iteration, the  equality $\bA'(\bx^{(r)})\bx^{(r)}=\bb'(\bx^{(r)})$ holds (due to the definition of active set), which implies that
\begin{equation}
\bA'(\bx^{(r+1)})=\bA'(\bx^{(r)}+\alpha^{(r)}\bd^{(r)})=\bb'(\bx^{(r)})+\alpha^{(r)}\bA'\bd^{(r)}\mathop{=}\limits^{(a)}\bb'(\bx^{(r)})
\end{equation}
where $(a)$ is true because $\bd^{(r)}\in\textsf{Null}(\bA'(\bx^{(r)}))$, so $\dim(\mathcal{F}(\bx^{(r+1)}))$ is no more than $\dim(\mathcal{F}(\bx^{(r)}))$. Second, we show that if $\alpha^{(r)}_{\max}$ is chosen, $\dim(\mathcal{F}(\bx^{(r)}))$ is decreased at least by 1. Since at the $r+1$th iteration the algorithm still choose $\alpha^{(r+1)}_{\max}$, meaning that iterate $\bx^{(r+1)}$ at least touches a new boundary, i.e., $\dim(\mathcal{F}(\bx^{(r)}))\ge\dim(\mathcal{F}(\bx^{(r+1)}))+1$. In other words, when step-size $\alpha^{(r)}_{\max}$ is chosen and updated by \eqref{eq.alphamax}, the dimension of the free space is reduced at least by 1. Therefore, if step-size $\alpha^{(r)}_{\max}$ is chosen consecutively and updated by \eqref{eq.alphamax}, $\dim(\mathcal{F}(\bx^{(r)}))$ is monotonically decreasing.
Since the dimension of the subspace is at most $d$ and the total number of constraints is at most $m$, the algorithm consecutively performs NCD at most $\min\{d,m\}$ times.
\end{proof}

\subsection{Simplified SNAP}\label{sec:ssnp}
Before proving \thref{th.1}, we give a simplified version of SNAP shown in \algref{alg:p5} and show the convergence of this algorithm, which will be helpful of understanding the key steps in the proof of SNAP. The reason is that some techniques, which are considered in SNAP to reduce the computational complexity, involve multiple branches that SNAP may use. {A combinatorial choice of these subroutines makes the convergence analysis complicated, so it will be more intuitive to see the proof for the simplified algorithm, which essentially has the same rate as SNAP.} Here, we give a concise proof for \algref{alg:p5} in the following.

\begin{algorithm}[ht]
\caption{A simplified Negative-curvature grAdient Projection algorithm}
\label{alg:p6}\footnotesize
\begin{algorithmic}[1]\small
\State {\bfseries Input:} $\bx^{(1)},\epsilon_G,\epsilon_H,L_1,L_2,\alpha_\pi=1/L_1,\delta,\bA,\bb, \textsf{flag}=\Diamond$
\For {$r=1,\ldots$}
\If {$\|g_\pi(\bx^{(r)})\|\le {\epsilon_G}$}
\State $[\textsf{flag}, \bv(\bx^{(r)}), -{\epsilon'_H(\delta)}]= \textsf{ \it Negative-Eigen-Pair}(\bx^{(r)},f,\delta)$
\If {$\textsf{flag}=\Diamond$}
\State Compute $q_{\pi}(\bx^{(r)})$ by \eqref{eq.compq}
\State Choose $\bv(\bx^{(r)})$ such that $q_{\pi}(\bx^{(r)})^{\T}\bv(\bx^{(r)})\le0$
\State $\bd^{(r)}=\bv(\bx^{(r)})$  \Comment{Choose negative curvature direction}
\State Update $\bx^{(r+1)}$ by  \algref{alg:p3} \Comment{Perform line search}
\Else \State Output $\bx^{(r)}$
\EndIf
\Else \State Update $\bx^{(r+1)}$ by \eqref{eq.pgd}\Comment{{Perform PGD}}
\EndIf
\EndFor
\end{algorithmic}
\end{algorithm}

\begin{proof}
We will show that after the number of iteration given in \eqref{alg:p5}, the algorithm will converge to an $(\epsilon_G, \epsilon_H)$-SOSP1 defined in \eqref{eq.cond1}.

Let us suppose that at a given point $\bx^{(r)}$, the condition \eqref{eq.cond1} does not hold.

First suppose that the first-order condition is not satisfied, that is
 $\|g_{\pi}(\bx^{(r)})\|\ge\epsilon_G$. Then the algorithm will perform the PGD step \eqref{eq.pgd}. By \leref{le.1}, the descent of the objective value is given by
 $\frac{\epsilon^2_G}{18L_1}.$

Second, when the size of the gradient is small, but the second-order condition in \eqref{eq.cond12} is not satisfied (i.e., when $\textsf{flag}=\Diamond$). Then in this case,  NCD will be performed, and  there are two choices for selecting the step-size:\\
\noindent{\bf Case 1) ($\textsf{flag}_{\alpha}=\emptyset$)}: The algorithm implements $\bx^{(r+1)}=\bx^{(r)}+\alpha^{(r)}\bd^{(r)}$ without using $\alpha^{(r)}_{\max}$ computed by \eqref{eq.alphamax}.\\
\noindent{\bf Case 2) ($\textsf{flag}_{\alpha}=\Diamond$)}: $\alpha^{(r)}_{\max}$ is computed by \eqref{eq.alphamax} to update $\bx^{(r+1)}$.

In the first case, we know that if $\alpha^{(r)}_{\max}$ is not chosen by \eqref{eq.alphamax}, then some sufficient descent will be achieved. From \leref{le.2}, we know that after one step update the objective value decreases as
\begin{equation}
f(\bx^{(r+1)})\le f(\bx^{(r)})-\Delta, \;\mbox{where}\; \Delta=\frac{0.06\epsilon'^3_H(\delta)}{L^2_2}.
\end{equation}

In the second case, the descent for each step may not be quantified. However, it is important to see that, by \leref{le.4}, the algorithm can repeat this case (i.e., choosing $\alpha^{(r)}_{\max}$  by \eqref{eq.alphamax}) for at most $\min\{d,m\}$  consecutive times.

By using the above fact, let us look at the second case in more detail and see how we can quantify the descent achieved by some $k\le\min\{d,m\}$ consecutive times that {\bf Case 2)} happens.
Since {\bf Case 2)} can happen at most $\min\{d,m\}$ consecutively  times, our strategy is to trace back the steps of the algorithm from the current iteration $\bx^{(r)}$ and see what happens.
To this end, let us suppose that at iteration $r$ {\bf Case 2)} happens.

First of all, if the sequence has never been updated by either {\bf Case 1} or PGD,  the algorithm must stop by at most $d$ iterations. If the algorithm stops, it is clear that  an $(\epsilon_G, \epsilon_H)$-SOSP1 solution is obtained. {This is because the inactive set becomes empty and \eqref{eq.cond12} is satisfied automatically.}

Second, consider iteration from $r-\min\{d,m\}$ until $r$. The sequence must be updated by either {\bf Case 1)} or the PGD step, otherwise the algorithm will stop and output an $(\epsilon_G, \epsilon_H)$-SOSP1 solution. Then we must have
\begin{align}\label{eq.descdm}
f(\bx^{(r)}) - f(\bx^{(r-\min\{d,m\})}) <  - \min \left\{\epsilon^2_G/(18L_1), 0.06\epsilon'^3_H(\delta)/(L^2_2)\right\},\forall r>\min\{d,m\}.
\end{align}

Summarizing the argument so far, we have that, after every consecutive $\min\{d,m\}$ iterations of the algorithm, either the algorithms stops, or \eqref{eq.descdm} holds true.

After applying the telescope sum on \eqref{eq.descdm}, we have
\begin{equation}\label{eq.de}
f^{\star}-f(\bx^{(1)})\le f(\bx^{(r)})-f(\bx^{(1)})\le -r \frac{\min \left\{\epsilon^2_G/(18L_1), 0.06\epsilon'^3_H(\delta)/(L^2_2)\right\}}{\min\{d,m\}}.
\end{equation}
where $f^{\star}$ denotes the minimum objective value achieved by the global optimal solution. By defining
\begin{equation}
    \Delta'\bydef\min\left\{0.06\frac{\epsilon'^3_H(\delta)}{L^2_2},\frac{\epsilon^2_G}{18L_1}\right\}\frac{1}{\min\{d,m\}},
\end{equation}
we obtain
\begin{equation}
r\le\frac{f(\bx^{(1)})-f^{\star}}{\Delta'}.
\end{equation}

Since the probability that eigen-pair fails to extract the negative curvature is $\delta$, applying the union bound, we only need to set $\delta'=\delta (f(\bx^{(1)}-f^{\star})/\Delta'$ so that we can have the claim that SNAP will output approximate SOSP1s with probability $1-\delta'$. Note that $\gamma \epsilon'_H(\delta)>\epsilon_H$. We can obtain the convergence rate of \algref{alg:p6} by
\begin{equation}
\widetilde{\mathcal{O}}\left(\frac{\min\{d,m\}(f(\bx^{(1)})-f^{\star})}{\min\left\{\frac{\epsilon^2_G}{L_1},\frac{\epsilon^3_H}{L^2_2}\right\}}\right),
\end{equation}
which completes the proof.
\end{proof}

\subsection{Proof of \thref{th.1}}

Compared with the simplified SNAP, SNAP has two main differences: 1) $\bd^{(r)}$ can be chosen by either $-q_{\pi}(\bx^{(r)})$ or $\bv(\bx^{(r)})$ in the NCD  step based on the minimum amount of the objective reduction; 2) when $\textsf{flag}^{(r)}_{\alpha}=\emptyset$ there is a minimum number of iterations (denoted by $r_{\textsf{th}}$) that SNAP calls subroutine $\textsf{\it Negative-Eigen-Pair}$ twice.
\begin{proof}
We show that after the number of iteration given in \eqref{alg:p1}, SNAP will converge to an $(\epsilon_G, \epsilon_H)$-SOSP1 defined in \eqref{eq.cond1} with high probability.

Let us suppose that at a given point $\bx^{(r)}$, the condition \eqref{eq.cond1} does not hold.

If the first-order condition is not satisfied, (i.e., $\|g_{\pi}(\bx^{(r)})\|\ge\epsilon_G$). By \leref{le.1}, the descent of the objective value by performing the PGD step \eqref{eq.pgd} is at least
 $\frac{\epsilon^2_G}{18L_1}$, i.e.,
 \begin{equation}\label{eq.desgdl}
     f(\bx^{(r+1)})\le f(\bx^{(r)})-\frac{\epsilon^2_G}{18L_1}.
 \end{equation}

Second, when the size of the gradient is small, but the second-order condition in \eqref{eq.cond12} is not satisfied (i.e., when $\textsf{flag}=\Diamond$). Then in this case,  the NCD will be performed, and  there are two choices for selecting the step-size:\\
\noindent{\bf Case 1) ($\textsf{flag}_{\alpha}=\emptyset$)}: The algorithm implements $\bx^{(r+1)}=\bx^{(r)}+\alpha^{(r)}\bd^{(r)}$ without using $\alpha^{(r)}_{\max}$ computed by \eqref{eq.alphamax}.\\
\noindent{\bf Case 2) ($\textsf{flag}_{\alpha}=\Diamond$)}: $\alpha^{(r)}_{\max}$ is computed by \eqref{eq.alphamax} to update $\bx^{(r+1)}$.

In the {\bf first} case, we know that if $\alpha^{(r)}_{\max}$ is not chosen by \eqref{eq.alphamax}, then some sufficient descent will be achieved. In particular, from \leref{le.2}--\leref{le.3}, no matter which direction (i.e., either $-q_{\pi}(\bx^{(r)})$ or $\bv(\bx^{(r)})$) is chosen, after one update the objective value decreases as 
\begin{equation}\label{eq.sufd}
f(\bx^{(r+1)})\le f(\bx^{(r)})-\Delta, \;\mbox{where}\; \Delta=\min\left\{\frac{0.18\epsilon'^3_H(\delta)}{L^2_2},\frac{0.06\epsilon'^3_H(\delta)}{L^2_2}\right\} = \frac{0.06\epsilon'^3_H(\delta)}{L^2_2}.
\end{equation}
After performing one step, $\textsf{flag}_\alpha$ becomes $\Diamond$. From the algorithm we know that  $r_{th}$ number of PGD will be performed. However, the amount of descent cannot be quantified  (becuase we are in NCD so $\|g_{\pi}(\mathbf{x}^{(r)})\|\le \epsilon_G$). Thus, we have
\begin{equation}\label{eq.sufd:3}
f(\bx^{(r+r_{th})})\le f(\bx^{(r)})-\Delta.
\end{equation}

In the {\bf second} case, the descent for each step may not be quantified. However, it is important to see that, by \leref{le.4}, the algorithm can repeat this case (i.e., choosing $\alpha^{(r)}_{\max}$  by \eqref{eq.alphamax}) for at most $\min\{d,m\}$  consecutive times.

By using the above fact, let us look at the second case in more detail and see how we can quantify the descent achieved by some $k\le\min\{d,m\}$ consecutive times that {\bf Case 2)} happens.
Since {\bf Case 2)} can happen at most $\min\{d,m\}$ consecutively  times, our strategy is to trace back the steps of the algorithm from the current iteration $\bx^{(r)}$ and see what happens.
To this end, let us suppose that at iteration $r$ {\bf Case 2)} happens.

First of all, if the sequence has never been updated by either {\bf Case 1} or PGD,  the algorithm must stop by at most $d$ iterations. If the algorithm stops, it is clear that  an $(\epsilon_G, \epsilon_H)$-SOSP1 solution is obtained. {This is because the inactive set becomes empty and \eqref{eq.cond12} is satisfied automatically.}

Second, consider iteration from $(r-\min\{d,m\})$ until $r$. The sequence must be updated by either {\bf Case 1)} or the PGD step, otherwise the algorithm will stop and output an $(\epsilon_G, \epsilon_H)$-SOSP1 solution.
Then we must have
\begin{align}\label{eq.descdm2}
f(\bx^{(r)}) - f(\bx^{(r-\min\{d,m\})}) \le  - \min \left\{\epsilon^2_G/(18L_1), 0.06\epsilon'^3_H(\delta)/L^2_2\right\},\forall r>\min\{d,m\}.
\end{align}

Summarizing the argument so far in the second case, we have that, after every consecutive $\min\{d,m\}$ iterations of the algorithm, either the algorithms stops, \eqref{eq.descdm2} holds true.

Note that {\bf Case 1} and {\bf Case 2} are mutually exclusive.
Take $T'\bydef\min\{d,m\}\cdot r_{\textsf{th}}$. From \eqref{eq.sufd}, we know that
\begin{equation}\label{eq.tele1}
    f(\bx^{(r+T')})-f({\bx^{(r)}})\le-\min\{d,m\}\frac{0.06\epsilon'^3_H(\delta)}{L^2_2}.
\end{equation}
From \eqref{eq.descdm2}, we have
\begin{equation}\label{eq.tele2}
    f(\bx^{(r)})-f(\bx^{(r-T')})\le-r_{\textsf{th}}\min\left\{\frac{\epsilon^2_G}{18L_1},\frac{0.06\epsilon'^3_H(\delta)}{L^2_2}\right\}.
\end{equation}
Adding \eqref{eq.tele1} and \eqref{eq.tele2} together, we have
\begin{equation}\label{eq.tele3}
    f(\bx^{(r+T')})-f(\bx^{(r-T')})\le -\min\left\{\min\{d,m\}\frac{0.06\epsilon'^3_H(\delta)}{L^2_2},r_{\textsf{th}}\min\left\{\frac{\epsilon^2_G}{18L_1},\frac{0.06\epsilon'^3_H(\delta)}{L^2_2}\right\}\right\}.
\end{equation}
Let $n$ be the number of  $2T'$ blocks contained in $[1,r]$. After applying the telescope sum on \eqref{eq.desgdl}, \eqref{eq.tele3}, we have
\begin{align}\notag
&f^{\star}-f(\bx^{(1)})\le f(\bx^{(r)})-f(\bx^{(1)})\le f(\bx^{(2nT'+1)})-f(\bx^{(1)})
\\
\le&-n \min\left\{\min\{d,m\}\frac{0.06\epsilon'^3_H(\delta)}{L^2_2},r_{\textsf{th}}\min\left\{\frac{\epsilon^2_G}{18L_1},\frac{0.06\epsilon'^3_H(\delta)}{L^2_2}\right\},T'\frac{\epsilon^2_G}{18L_1}\right\}\label{eq.de2}
\end{align}
where $f^{\star}$ denotes the minimum objective value achieved by the global optimal solution, and $n\ge(r-1)/(2T')$. By defining
\begin{equation}
    \Delta'\bydef \min\left\{\min\{d,m\}\frac{0.06\epsilon'^3_H(\delta)}{L^2_2},r_{\textsf{th}}\min\left\{\frac{\epsilon^2_G}{18L_1},\frac{0.06\epsilon'^3_H(\delta)}{L^2_2}\right\},T'\frac{\epsilon^2_G}{18L_1}\right\},
\end{equation}
we obtain
\begin{equation}\label{eq.rupbd}
n\le\frac{f(\bx^{(1)})-f^{\star}}{\Delta'},
\end{equation}
and
\begin{align}\label{eq.ruppbd}
\notag
   r\le 2nT'+1\le&\frac{f(\bx^{(1)})-f^{\star}}{\Delta'}
    \\
    \le&(f(\bx^{(1)})-f^{\star})\max\left\{\frac{0.06\max\{r_{\textsf{th}},\min\{d,m\}\} L^2_2}{\epsilon'^3_H(\delta)},\frac{18\min\{d,m\}L_1}{\epsilon^2_G}\right\}.
\end{align}
Since the probability that eigen-pair fails to extract the negative curvature is $\delta$, applying the union bound, we only need to set $\delta'=\delta (f(\bx^{(1)}-f^{\star})/\Delta'$ so that we can have the claim that SNAP will output approximate SOSP1s with probability $1-\delta'$.

Note that $\gamma \epsilon'_H(\delta)>\epsilon_H$. We can obtain the convergence rate of \algref{alg:p1} by
\begin{equation}
\widetilde{\mathcal{O}}\left(\frac{(f(\bx^{(1)})-f^{\star})}{\min\left\{\frac{\epsilon^2_G}{\min\{d,m\}L_1},\frac{\epsilon^3_H}{\max\{r_{\textsf{th}},\min\{d,m\}\}L^2_2}\right\}}\right),
\end{equation}
This completes the proof.

If $r_{\textsf{th}}$ is a constant, the convergence rate is $\widetilde{\mathcal{O}}\left((\min\{d,m\}(f(\bx^{(1)})-f^{\star}))/(\min\left\{\frac{\epsilon^2_G}{L_1},\frac{\epsilon^3_H}{L^2_2}\right\})\right).$

From the above proof, we can see that essentially we only need to quantify the average descent of objective value per-iteration since we have descent either by one step or over a certain number of iterations, e.g., $\min\{d,m\}$ or $r_{\textsf{th}}$. Therefore, we give the following summary that quantify the average descent of the objetive value per-iteration if SNAP does not meet the stopping criteria.
\begin{enumerate}
	\item $\|g_{\pi}(\bx^{(r)})\|\ge\epsilon_G$: descent per-iteration is $\frac{\epsilon^2_G}{18L_1}$ by \leref{le.1}.
	
	\item $\|g_{\pi}(\bx^{(r)})\|\le\epsilon_G$
	\begin{itemize}
		\item   $\textsf{flag}_{\alpha}=\emptyset$ and $r-r_{\textsf{last}}<r_{\textsf{th}}$: descent per-iteration: $0.06\epsilon'^3_H(\delta)/(r_{\textsf{th}}L^2_2)$ by \eqref{eq.sufd:3}.
		\item  $\textsf{flag}_{\alpha}=\Diamond$ or   $r-r_{\textsf{last}}\ge r_{\textsf{th}}$: 
		\begin{itemize}
			\item $\textsf{flag}=\emptyset$: \mhedit{This case means that $\bx^{(r)}$ is an $(\epsilon_G,\epsilon_H)$-SOSP1.} We output $\bx^{(r)}$.
			\item $\textsf{flag}=\Diamond$ (\mhedit{there is a negative curvature}):
			\begin{itemize}
				\item $\textsf{flag}_{\alpha}=\emptyset$ (\mhedit{boundary not touched}): descent per-iteration is at least $0.06\frac{\epsilon'^3_H(\delta)}{L^2_2}$ by \eqref{eq.sufd}. 
				\item $\textsf{flag}_{\alpha}=\Diamond$ (\mhedit{boundary touched}): descent per-iteration  $\min\{\frac{\epsilon^2_G}{18L_1},0.06\frac{\epsilon'^3_H(\delta)}{L^2_2}\}/\min\{d,m\}$ by \eqref{eq.descdm2}. 
			\end{itemize}
		\end{itemize}
	\end{itemize}
\end{enumerate}
\end{proof}
Finally, we comment that it is of interest to find an $(\epsilon,\sqrt{\epsilon})$ (i.e, $\epsilon_G=\epsilon$ and $\epsilon_H=\sqrt{L_2\epsilon}$) in practice. If we choose $r_{\textsf{th}}\sim L_1\sqrt{L_2\epsilon}$,  then $\Delta'\sim\mathcal{O}(\epsilon^2/(\min\{d,m\}L_1))$, resulting in the convergence rate of SNAP as $\widetilde{\mathcal{O}}(\min\{d,m\}L_1/\epsilon^2)$.

\section{Proofs of SNAP$^+$}\label{sec:snap+}

Before proceeds, we first have the following definitions and corresponding properties of the SP-GD iterates which will be helpful in the proof of the convergence rate. Throughout the proof we will assume that \asref{as1} is satisfied.

\paragraph{Strict Saddle Point:}
\begin{condition}\label{cond:saddle}
A strict saddle point $\bx$ satisfies the following condition:
\begin{equation}
\lambda_{\min}(\bH_{\bP}(\bx))\le-\epsilon_H.
\end{equation}
Let $\vec{\be}$ denotes the eigenvector of $\bH_{\bP}({\bx})$ corresponds to the smallest  eigenvalue of $\bH_{\bP}(\bx)$.
\end{condition}

\paragraph{Approximate Objective Function:}
We define an approximate objective function as
\begin{equation}
\widehat{f}_{\bx}(\bu)\bydef f(\bx+\bu)-f(\bx)-\nabla_{\bx} f(\bx)^{\T}\bu.
\end{equation}
Then, we can have
\begin{equation}
\nabla_{\bu}\widehat{f}_{\bx}(\bu)=\nabla_{\bx} f(\bx+\bu)-\nabla_{\bx}  f(\bx).
\end{equation}
It is easy to see that $\nabla_{\bu}\widehat{f}_{\bx}(\bu)$ is also $L_1$-Lipschitz continuous.

In the rest of the paper, we just use $\widehat{f}$ as abbreviated $\widehat{f}_{\bx}$. Let $\widehat{q}_{\pi}(\bu)\bydef\bP \nabla_{\bu}\widehat{f}(\bu)$ and $\bP$ is the projection matrix defined in \eqref{eq:P}.

We can have
\begin{equation}
\widehat{q}_{\pi}(\bu)= q_{\pi}(\bx+\bu)-q_{\pi}(\bx).
\end{equation}

From the update rule of SP-GD \eqref{eq.spgd}, we know that
\begin{equation}
\bu^{(r+1)}=\bu^{(r)}-\beta \widehat{q}_{\pi}(\bu^{(r)}).
\end{equation}

In the following, \leref{le.l2lip} is a preliminary lemma which will be used in \leref{le.layer31} and \thref{th:spgd1}. Further combining \leref{le.layer31} and \leref{le.layer32} leads to \thref{th:spgd1}.

\begin{lemma}\label{le.l2lip}
If function $f(\cdot)$ is $L_2$-Hessian Lipschitz, we have
\begin{equation}
\left\|\int^1_0\bP^{\T}\nabla^2 f(\theta\bx)d\theta-\bP^{\T}\nabla^2 f(\bx')\bP\right\|\le L_2\|\bx\|+\|\bx'\|, \forall \bx,\bx'\in\mathcal{X}.
\end{equation}
where $\bP$ denotes the projection matrix and $\theta\in[0,1]$.
\end{lemma}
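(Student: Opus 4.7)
The plan is to proceed by a direct estimate, pulling the spectral norm inside the integral and then invoking the $L_2$-Hessian Lipschitz assumption pointwise. Reading the statement, I interpret the first term as $\int_0^1 \bP^{\T}\nabla^2 f(\theta\bx)\bP\, d\theta$ (the trailing $\bP$ appears to be a typo), and the right-hand side as $L_2(\|\bx\|+\|\bx'\|)$, since the bound is meant to be homogeneous in $L_2$.

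First I would rewrite the quantity of interest as a single integral of a difference:
\begin{equation*}
\int_0^1 \bP^{\T}\nabla^2 f(\theta\bx)\bP\, d\theta - \bP^{\T}\nabla^2 f(\bx')\bP
= \int_0^1 \bP^{\T}\bigl[\nabla^2 f(\theta\bx) - \nabla^2 f(\bx')\bigr]\bP\, d\theta,
\end{equation*}
using that $\int_0^1 1\, d\theta = 1$ so the $\bP^{\T}\nabla^2 f(\bx')\bP$ term can be absorbed into the integrand.

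Next I would apply the triangle inequality for the spectral norm (Jensen-type estimate for the integral) followed by submultiplicativity together with $\|\bP\|\le 1$ (since $\bP$ is an orthogonal projector):
\begin{equation*}
\Bigl\|\int_0^1 \bP^{\T}\bigl[\nabla^2 f(\theta\bx)-\nabla^2 f(\bx')\bigr]\bP\, d\theta\Bigr\|
\le \int_0^1 \|\bP\|^2\,\bigl\|\nabla^2 f(\theta\bx)-\nabla^2 f(\bx')\bigr\|\, d\theta
\le \int_0^1 \bigl\|\nabla^2 f(\theta\bx)-\nabla^2 f(\bx')\bigr\|\, d\theta.
\end{equation*}

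Finally, invoking the $L_2$-Hessian Lipschitz property from Assumption~\ref{as1} together with the triangle inequality $\|\theta\bx - \bx'\| \le \theta\|\bx\| + \|\bx'\| \le \|\bx\| + \|\bx'\|$, the integrand is bounded by $L_2(\|\bx\|+\|\bx'\|)$ uniformly in $\theta\in[0,1]$, and integrating yields the claimed bound. There is no real obstacle here; the only subtlety is making sure that the projection $\bP$ (which depends on the active set at some base point) is indeed a true orthogonal projector so that $\|\bP\|\le 1$, which follows from its definition in \eqref{eq:P} as $\bI$ minus the projector onto the range of $(\bA'(\bx))^{\T}$.
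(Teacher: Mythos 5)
Your proof is correct, but it establishes a symmetrized variant of the lemma rather than the inequality as printed, and in doing so it takes a genuinely different route from the paper. The paper keeps the asymmetric left-hand side $\int_0^1\bP^{\T}\nabla^2 f(\theta\bx)\,d\theta-\bP^{\T}\nabla^2 f(\bx')\bP$, substitutes $\bP=\bI-\bP_{\perp}$, splits off the cross term $\bP^{\T}\nabla^2 f(\bx')\bP_{\perp}$, asserts that this term vanishes, and only then applies the pointwise Hessian-Lipschitz bound to $\int_0^1\|\nabla^2 f(\theta\bx)-\nabla^2 f(\bx')\|\,d\theta$ exactly as you do (your reading of the right-hand side as $L_2(\|\bx\|+\|\bx'\|)$ matches the paper's final display). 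Your route---absorbing the constant matrix into the integral, then using $\|\bP\|\le1$ and the Lipschitz estimate with $\|\theta\bx-\bx'\|\le\|\bx\|+\|\bx'\|$---is simpler and sidesteps that cancellation step entirely; this is a real advantage, since the identity $\bP^{\T}\nabla^2 f(\bx')\bP_{\perp}=0$ invoked by the paper does not hold for a general Hessian ($\nabla^2 f(\bx')$ need not leave the range of $\bP_{\perp}$ invariant), so the symmetric statement you prove is the one that is actually robust. What your write-up should add is one sentence checking that the symmetric version suffices where the lemma is used: in \eqref{eq.dyv}--\eqref{eq:delta} the operator $\Delta^{(r)}=\int_0^1\bP^{\T}\nabla^2 f(\bu^{(r)}+\bx+\theta\bv^{(r)})\,d\theta-\bP\nabla^2 f(\bx)\bP$ only ever acts on $\bv^{(r)}$, and by induction $\bv^{(r)}$ lies in the range of $\bP$ (it is initialized along $\vec{\be}\in\mathcal{F}(\bx)$, and every term in the recursion \eqref{eq:dynamic} maps that subspace into itself), so inserting the trailing $\bP$ inside the integral changes nothing downstream. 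With that remark added, your argument is a sound---and arguably preferable---replacement for the paper's proof.
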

\begin{proof} We have the following relations:
\begin{align}
\notag
&\left\|\int^1_0\bP^{\T}\left(\nabla^2 f(\theta\bx)-\nabla^2 f(\bx')\bP\right) d\theta \right\|
\\\notag
=&\left\|\int^1_0\bP^{\T}\left(\nabla^2 f(\theta\bx)-\nabla^2 f(\bx')(\bI-\bP_{\perp})\right) d\theta \right\|
\\\notag
\mathop{\le}\limits^{(a)}&\int^1_0\|\nabla^2 f(\theta\bx)-\nabla^2 f(\bx')\|d\theta+\|\bP^{\T}\nabla^2 f(\bx')\bP_{\perp}\|
\\\notag
\mathop{\le}\limits^{(b)}& L_2\int^1_0\|\theta\bx-\bx'\|d\theta\le L_2\int^1_0\theta\|\bx\|d\theta+L_2\|\bx'\|\le L_2(\|\bx\|+\|\bx'\|)
\end{align}
where in $(a)$ $\bP_{\perp}=(\bA'(\bz))^{\T}\left(\bA'(\bz)(\bA'(\bz))^{\T}\right)^{-1}\bA'(\bz)$ and we use $\|\bP\|=1$, the symmetry of matrix $\bP$ and the fact that $\bP\nabla^2 f(\bx')\bP_{\perp}=0$ since $\bP$ projects all the column of $\nabla^2 f(\bx')$ into the range space of $\bP$ so that it is in the null space of $\bP_{\perp}$; in $(b)$ we use the $L_2$-Hessian Lipschitz continuity.
\end{proof}

We also need to introduce some constants defined as follows,
\begin{subequations}
\begin{align}
\sF\bydef& \frac{\epsilon^{3}_H}{L^2_2\widehat{c}^5}\log^{-3}\left(\frac{d\kappa}{\delta}\right),\label{eq.defff}
\\
\sS\bydef& \frac{\epsilon_H}{L_2\widehat{c}^2}\log^{-1}\left(\frac{d\kappa}{\delta}\right),\label{eq.defs}
\\
\sT\bydef&\frac{\log\left(\frac{d\kappa}{\delta}\right)}{\beta\epsilon_H}.\label{eq.deft}
\end{align}
\end{subequations}
These quantities
refer to different units of the algorithm. Specifically, $\sF$
accounts for the objective value,
$\sS$ for the norm of the difference between iterates, and $\sT$ for the number
of iterations. Also, we define a condition number in terms of $\epsilon_H$ as
$$\kappa\bydef\frac{L_1}{\epsilon_H}\ge 1.$$ In the process of the proofs, we also use conditions $$\log(\frac{d\kappa}{\delta})\ge1$$ when $\delta\in(0,\frac{d\kappa}{e}]$ repeatedly to simply the expressions of the inequalities.

\begin{lemma}\label{le.layer31}
Under assumption, consider ${\bx}$ that satisfies \conref{cond:saddle} and a  sequence $\bu^{(r)}$ generated by SP-GD.
Let us define a constant $\beta\le1/L_1$, and the following quantities:
\begin{equation}
\sR\bydef\frac{\sS}{\widehat{c}^2\kappa\log\left(\frac{d\kappa}{e}\right)},\quad\textrm{and}\quad T\bydef\min\left\{\min_{r\ge 1}\{r|\widehat{f}(\bu^{(r)})-\widehat{f}(\bu^{(1)})\le-2\sF\},\widehat{c}\cdot\sT\right\}.\label{eq.defoft}
\end{equation}
Then for any constant $\widehat{c}\ge1$, $\delta\in(0,\frac{d\kappa}{e}]$, when initial point $\bu^{(1)}$ satisfies
\begin{equation}
\|\bu^{(1)}-\bx\|\le 2\sR,
\end{equation}
 the iterates generated by SP-GD satisfy $\|\bu^{(r)}-{\bx}\|\le3\sS,\forall r<T$.
\end{lemma}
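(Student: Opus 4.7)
\textbf{Proof Plan for Lemma \ref{le.layer31} (the ``improve-or-localize'' step for SP-GD).}

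The overall strategy is the standard ``improve-or-localize'' argument for (projected) gradient descent: if the objective has not decreased by more than $2\sF$ by iteration $r$, then the total ``travel distance'' of the iterates must be small, so the iterates cannot wander far from $\bu^{(1)}$, and hence not far from $\bx$. My plan is to carry this out for the centered function $\widehat{f}(\bu)=f(\bx+\bu)-f(\bx)-\nabla f(\bx)^{\T}\bu$, whose (Euclidean) gradient $\nabla\widehat{f}(\bu)=\nabla f(\bx+\bu)-\nabla f(\bx)$ is still $L_1$-Lipschitz, and whose projected gradient $\widehat{q}_\pi(\bu)=\bP\nabla\widehat{f}(\bu)=q_\pi(\bx+\bu)-q_\pi(\bx)$ drives the SP-GD recursion $\bu^{(r+1)}=\bu^{(r)}-\beta\,\widehat{q}_\pi(\bu^{(r)})$.

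First I would establish a per-step descent inequality on $\widehat{f}$. Using $L_1$-smoothness of $\widehat{f}$, the fact that the step lies in the range of $\bP$, and that $\bP$ is an orthogonal projector (so $\nabla\widehat{f}(\bu^{(r)})^{\T}\widehat{q}_\pi(\bu^{(r)})=\|\widehat{q}_\pi(\bu^{(r)})\|^2$), I obtain for $\beta\le 1/L_1$
\begin{equation}
\widehat{f}(\bu^{(r+1)})\le \widehat{f}(\bu^{(r)})-\frac{\beta}{2}\|\widehat{q}_\pi(\bu^{(r)})\|^2.
\end{equation}
Telescoping from $1$ to $r-1$ (with $r<T$, where by definition of $T$ we have $\widehat{f}(\bu^{(r)})-\widehat{f}(\bu^{(1)})>-2\sF$) gives
\begin{equation}
\sum_{s=1}^{r-1}\|\widehat{q}_\pi(\bu^{(s)})\|^2\le \frac{2}{\beta}\bigl(\widehat{f}(\bu^{(1)})-\widehat{f}(\bu^{(r)})\bigr)<\frac{4\sF}{\beta}.
\end{equation}

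Next I bound the movement $\|\bu^{(r)}-\bu^{(1)}\|$. Since $\bu^{(r)}-\bu^{(1)}=-\beta\sum_{s=1}^{r-1}\widehat{q}_\pi(\bu^{(s)})$, Cauchy–Schwarz combined with the displayed sum-of-squares bound and $r-1\le T\le\widehat{c}\,\sT$ gives
\begin{equation}
\|\bu^{(r)}-\bu^{(1)}\|^2\le \beta^2(r-1)\sum_{s=1}^{r-1}\|\widehat{q}_\pi(\bu^{(s)})\|^2<4\beta\,T\,\sF\le \frac{4\widehat{c}\log(d\kappa/\delta)\,\sF}{\epsilon_H}.
\end{equation}
Plugging in the definitions $\sF=\epsilon_H^3/(L_2^2\widehat{c}^5\log^3(d\kappa/\delta))$ and $\sS=\epsilon_H/(L_2\widehat{c}^2\log(d\kappa/\delta))$, the right-hand side simplifies to at most $4\sS^2$, hence $\|\bu^{(r)}-\bu^{(1)}\|\le 2\sS$.

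Finally I combine with the initialization hypothesis by the triangle inequality
\begin{equation}
\|\bu^{(r)}-\bx\|\le \|\bu^{(r)}-\bu^{(1)}\|+\|\bu^{(1)}-\bx\|\le 2\sS+2\sR.
\end{equation}
The verification that $2\sR\le \sS$, i.e., $\widehat{c}^{\,2}\kappa\log(d\kappa/e)\ge 2$, follows immediately from $\widehat{c}\ge 1$, $\kappa\ge 1$ and (for $\delta\in(0,d\kappa/e]$) the standing inequality $\log(d\kappa/\delta)\ge 1$, which in particular implies $\log(d\kappa/e)\ge 1$ once $d\kappa\ge e^2$ (the degenerate small-dimension case can be absorbed into the constant $\widehat{c}$). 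This yields $\|\bu^{(r)}-\bx\|\le 3\sS$ as required.

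The main obstacle I anticipate is algebraic rather than conceptual: carefully tracking the several logarithmic factors and verifying that the choice of $\sR,\sS,\sF,\sT$ makes $4\beta T\sF\le 4\sS^2$ and $2\sR\le \sS$ simultaneously, without hidden dependence on $\widehat{c}$ going the wrong way. A secondary subtlety is that the descent inequality implicitly requires $\bx+\bu^{(r)}$ to lie in a region where the $L_1$-smoothness of $f$ applies; since the bound $\|\bu^{(r)}\|\le 3\sS$ is obtained inductively from the very estimate being proved, the argument should be phrased as an induction on $r$ so that the smoothness bound is invoked only at points already known to satisfy the localization.
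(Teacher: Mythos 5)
Your proposal is correct and follows essentially the same route as the paper's own proof: the $L_1$-smoothness descent inequality for $\widehat{f}$, telescoping against the definition of $T$, Cauchy–Schwarz on the accumulated steps with the identity $\widehat{c}\beta\sF\sT=\sS^2$, and a final triangle inequality using $\|\bu^{(1)}-\bx\|\le 2\sR\le\sS$. The only differences are cosmetic (the paper sets $\bu^{(1)}=0$ without loss of generality and is equally loose about the constant in $2\sR\le\sS$, which, as you note, is absorbed by the eventual choice $\widehat{c}\ge 51$).
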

\begin{proof}
Without loss of generality, let $\bu^{(1)}$ be the origin, i.e., $\bu^{(1)}=0$. According to the update rule of SP-GD, we have
\begin{equation}\label{eq.iteofu}
\bu^{(r+1)}=\bu^{(r)}-\beta \widehat{q}_{\pi}(\bu^{(r)}).
\end{equation}

Similar as the derivation in \eqref{eq:descent}, according to the $L_1$-gradient Lipschitz continuity, we have
\begin{align}
\notag
\hf(\bu^{(r+1)})=&\hf\left(\bu^{(r)}-\beta \hq_{\pi}(\bx^{(r)})\right)
\\\notag
\le& \hf(\bu^{(r)})-\beta \nabla \hf(\bu^{(r)})^{\T}\hq_{\pi}(\bu^{(r)})+\frac{\beta^2}{2}L_1\|\hq_{\pi}(\bu^{(r)})\|^2
\\\notag
\le& \hf(\bu^{(r)})-\beta \hq_{\pi}(\bu^{(r)})^{\T}\hq_{\pi}(\bu^{(r)})+\frac{\beta^2}{2}L_1\|\hq_{\pi}(\bu^{(r)})\|^2
\\\notag
=& \hf(\bu^{(r)})-\beta\|\hq_{\pi}(\bu^{(r)})\|^2+\frac{\beta^2}{2}L_1\|\hq_{\pi}(\bu^{(r)})\|^2
\\
=&\hf(\bu^{(r)}) -\left(\beta-\frac{\beta^2}{2}L_1\right)\|\hq_{\pi}(\bu^{(r)})\|^2.\label{eq.desbeta}
\end{align}

From \eqref{eq.desbeta}, we also know that
\begin{align}
\notag
\hf(\bu^{(r+1)})\mathop{\le}\limits^{(a)}& \hf(\bu^{(r)})-\frac{\beta}{2}\|\hq_{\pi}(\bu^{(r)})\|^2
\\
\mathop{=}\limits^{\eqref{eq.iteofu}}&\hf(\bu^{(r)})-\frac{1}{2\beta}\|\bu^{(r+1)}-\bu^{(r)}\|^2\label{eq.desofu}
\end{align}
where in $(a)$ we choose $\beta\le 1/L_1$.

By applying telescoping sum of \eqref{eq.desofu}, we have
\begin{equation}
\hf(\bu^{(r+1)})\le \hf(\bu^{(1)})-\frac{1}{2\beta}\sum^r_{\tau=1}\|\bu^{(\tau+1)}-\bu^{(\tau)}\|^2,\quad\forall r< T.\label{eq.desbd}
\end{equation}

According to the definition of $T$, we know that
\begin{equation}
\hf(\bu^{(1)})-\hf(\bu^{(r)})<2\sF,\quad\forall r< T.\label{eq.desbydef}
\end{equation}

Combining \eqref{eq.desbd} and \eqref{eq.desbydef} , we know that
\begin{equation}
\sum^{r-1}_{\tau=1}\|\bu^{(\tau+1)}-\bu^{(\tau)}\|^2 < 4\beta\sF.\label{eq.decubd}
\end{equation}

Next, we will get the upper bound of $\|\bu^{(r)}-\bu^{(1)}\|,\forall r<T$ as the following.
First, by the triangle inequality, we know
\begin{equation}
\|\bu^{(r)}-\bu^{(1)}\|\le\sum^{r-1}_{\tau=1}\|\bu^{(\tau+1)}-\bu^{(\tau)}\|,
\end{equation}
so we have
\begin{align}
\|\bu^{(r)}-\bu^{(1)}\|^2\le & (r-1)\sum^{r-1}_{\tau=1}\|\bu^{(\tau+1)}-\bu^{(\tau)}\|^2
\\
\le & (T-1)\sum^{r-1}_{\tau=1}\|\bu^{(\tau+1)}-\bu^{(\tau)}\|^2
\\
\mathop{\le}\limits^{\eqref{eq.decubd}}& T4\beta\sF\mathop{\le}\limits^{\eqref{eq.defoft}} 4\widehat{c}\beta\sF \sT \mathop{\le}\limits^{(a)}4\sS^2,\label{eq.normofu}
\end{align}
where in $(a)$ we use the relation ${\widehat{c}\beta\sF \sT}=\sS^2$ by applying \eqref{eq.defff}\eqref{eq.defs}\eqref{eq.deft}.

Due to the following fact
\begin{equation}
\|\bu^{(r)}-\bx\|=\|\bu^{(r)}-\bu^{(1)}+\bu^{(1)}-\bx\|\le\underbrace{\|\bu^{(r)}-\bu^{(1)}\|}_{\le 2\sS}+\underbrace{\|\bu^{(1)}-\bx\|}_{\le\frac{\sS}{\widehat{c}^2\log(\frac{d\kappa}{\delta})}}\le 3\sS
\end{equation}
where the last inequality is true when $\widehat{c}\ge 1$, and $d\kappa/\delta>e$. Therefore, we know that $\|\bu^{(r)}-\bx\|\le 3 \sS,\forall r<T$ where $\beta\le 1/L_1$, which completes the proof.
\end{proof}
\begin{lemma}\label{le.layer32}
Consider $\bx$ that satisfies \conref{cond:saddle}.  Suppose that  there exist
two iterates $\{\bu^{(r)}\}$ and $\{\bw^{(r)}\}$, generated
by SP-GD with two different initial points $\{\bu^{(1)}, \bw^{(1)}\}$, where these initial points satisfy
\begin{equation}
\|\bu^{(1)}-\bx\|\le \sR,\;\bw^{(1)}=\bu^{(1)}+\upsilon \sR \vec{\be},\;\upsilon\in[\delta/(2\sqrt{d}),1],\label{eq.inicond}
\end{equation}
where $\sR $ is defined in \eqref{eq.defoft}. Let us also define
\begin{equation}
T\bydef\min\left\{\min_{r\ge 1}\{r|{\hf}(\bw^{(r)})-\hf(\bw^{(1)})\le-2\sF\},\widehat{c}\cdot\sT\right\}\label{eq.defoftl}.
\end{equation}
Suppose  $\widehat{c}\ge 51$, $\delta\in(0,\frac{d\kappa}{e}]$, $\beta\le 1/L_1$, $\|\bu^{(r)}-\bx\|\le 3\sS, \forall r<T$, then we will have $T<\widehat{c}\cdot\sT$, that is, we must have
\begin{align}
{\hf}(\bw^{(r)})-\hf(\bw^{(1)})\le-2\sF.
\end{align}
\end{lemma}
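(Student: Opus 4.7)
The natural approach is a coupling/contradiction argument in the spirit of Jin et al.\ (2017). The idea is to track the divergence $\bv^{(r)} := \bw^{(r)} - \bu^{(r)}$ of the two coupled SP-GD trajectories, and exploit the fact that along the most negative-curvature direction $\vec{\be}$ of $\bH_{\bP}(\bx)$ the discrete dynamics amplifies perturbations at rate at least $(1+\beta\epsilon_H)$. My plan is as follows. First, I assume for contradiction that the conclusion fails, i.e.\ $T = \widehat{c}\cdot\sT$ and $\hat f(\bw^{(r)})-\hat f(\bw^{(1)}) > -2\sF$ for all $r<T$. Because $\|\bw^{(1)}-\bx\|\le \|\bu^{(1)}-\bx\|+\upsilon\sR \le 2\sR$, I can invoke \leref{le.layer31} on the $\bw$-sequence to get $\|\bw^{(r)}-\bx\|\le 3\sS$ for all $r<T$. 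Combined with the hypothesis on $\bu^{(r)}$, this yields the two-sided confinement $\|\bv^{(r)}\|\le 6\sS$ throughout the window.

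\paragraph{Coupling recursion.} Next I would derive a linearized recursion for $\bv^{(r)}$. Using $\hq_\pi(\bz)=\bP(\nabla f(\bx+\bz)-\nabla f(\bx))$ together with the fundamental theorem of calculus, the two updates subtract to
\begin{equation}
\bv^{(r+1)} \;=\; \bv^{(r)} - \beta\,\bP\bH^{(r)}\bv^{(r)}, \qquad \bH^{(r)} \;:=\; \int_0^1 \nabla^2 f\!\left(\bx+\bu^{(r)}+t\bv^{(r)}\right)\,dt.
\end{equation}
Because both iterates stay in $\mathsf{Null}(\bA'(\bx))$, we have $\bv^{(r)}=\bP\bv^{(r)}$, so I may write
\begin{equation}
\bv^{(r+1)} \;=\; (\bI-\beta\bH_{\bP}(\bx))\,\bv^{(r)} \;-\; \beta\,\bDelta^{(r)}\,\bv^{(r)},
\end{equation}
with perturbation $\bDelta^{(r)}=\bP\bH^{(r)}\bP-\bH_{\bP}(\bx)$. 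By the argument of \leref{le.l2lip} (Hessian-Lipschitzness, $\|\bP\|=1$), the operator norm $\|\bDelta^{(r)}\|$ is at most a constant multiple of $L_2\sS$, which by the definitions \eqref{eq.defs}--\eqref{eq.defff} is much smaller than $\epsilon_H$ once $\widehat{c}$ is large enough.

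\paragraph{Exponential amplification along $\vec{\be}$.} I then decompose $\bv^{(r)}=\alpha_r\vec{\be}+\bxi_r$ with $\bxi_r\perp\vec{\be}$. Because $\vec{\be}$ is an eigenvector of $\bH_{\bP}(\bx)$ with eigenvalue $\le-\epsilon_H$, the ``signal'' satisfies
\begin{equation}
\alpha_{r+1} \;=\; (1+\beta\epsilon_H)\alpha_r \;-\; \beta\,\vec{\be}^{\T}\bDelta^{(r)}\bv^{(r)},
\end{equation}
while the ``noise'' $\bxi_r$ obeys a contraction-like bound driven by $\bI-\beta\bH_{\bP}(\bx)$ restricted to the complement of $\vec{\be}$ plus a perturbation term of size $\beta\|\bDelta^{(r)}\|\|\bv^{(r)}\|$. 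Since $\alpha_1=\upsilon\sR$ and $\bxi_1=0$, an induction shows that at every step $\|\bxi_r\|$ remains a small multiple of $|\alpha_r|$, so that $|\alpha_r|\ge\tfrac12(1+\beta\epsilon_H)^{r-1}\upsilon\sR$. Using $\beta\epsilon_H\le 1$ and $\log(1+x)\ge x/2$,
\begin{equation}
|\alpha_T| \;\ge\; \tfrac12 e^{\widehat{c}\log(d\kappa/\delta)/2}\,\upsilon\sR \;\ge\; \tfrac{1}{4\sqrt{d}}\,(d\kappa/\delta)^{\widehat{c}/2}\,\delta\,\sR.
\end{equation}
Plugging $\sR=\sS/(\widehat{c}^{\,2}\kappa\log(d\kappa/\delta))$ from \eqref{eq.defoft}, for $\widehat{c}\ge 51$ this lower bound exceeds $6\sS$, contradicting $\|\bv^{(r)}\|\le 6\sS$. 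Hence the contradiction: $T<\widehat{c}\cdot\sT$, which by definition \eqref{eq.defoftl} forces $\hat f(\bw^{(r)})-\hat f(\bw^{(1)})\le -2\sF$ at some $r<\widehat{c}\cdot\sT$.

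\paragraph{Main obstacle.} The routine pieces are the coupling recursion and the application of \leref{le.layer31} to the $\bw$-sequence. The delicate part is the inductive control of the ``noise'' $\bxi_r$ relative to the ``signal'' $\alpha_r$: even though every step of the perturbation $\bDelta^{(r)}$ can create a new transverse component, I must argue that this newly created component cannot overwhelm the exponential growth along $\vec{\be}$. This requires carefully choosing the constants so that $\beta\|\bDelta^{(r)}\|\le O(\beta L_2\sS)\ll \beta\epsilon_H$, which is exactly why the definitions of $\sS$, $\sR$, $\sF$ were tuned with the polylog factors $\widehat{c}^2,\widehat{c}^4,\widehat{c}^5$; threading these constants through the induction carefully is the main bookkeeping challenge.
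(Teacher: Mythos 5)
Your plan is essentially the paper's own proof: it couples the two trajectories via $\bv^{(r)}=\bw^{(r)}-\bu^{(r)}$, bounds the linearization error $\Delta^{(r)}$ by $\mathcal{O}(L_2\sS)$ using \leref{le.l2lip}, obtains the $6\sS$ confinement by applying \leref{le.layer31} to the $\bw$-sequence, and runs the same signal/noise induction along $\vec{\be}$ so that exponential growth of the $\vec{\be}$-component contradicts the confinement before $\widehat{c}\cdot\sT$ iterations. The only caveat is that the block of $\bI-\beta\bH_{\bP}(\bx)$ orthogonal to $\vec{\be}$ need not be a contraction (its eigenvalues can be nearly as negative as $\lambda_{\min}$), so the induction should, as in the paper, use only that the orthogonal component grows no faster than the $(1+\beta\epsilon_H)$ rate of the signal while the accumulated perturbations remain a small constant fraction of it.
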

\begin{proof}
Let $\mathcal{H}\bydef\bH_{\bP}({\bx})$, where $\bx$ satisfies \conref{cond:saddle}. Let $\bu^{(1)}=0$ and define $\bv^{(r)}\bydef\bw^{(r)}-\bu^{(r)}$. According to the assumption of \leref{le.layer32}, we know
\begin{equation}
\bv^{(1)} = \bw^{(1)}=\upsilon \sR \vec{\be}=\upsilon\frac{\sS}{\widehat{c}^2\kappa\log\left(\frac{d\kappa}{e}\right)}\vec{\be}
\end{equation}
where $\upsilon\in[\delta/(2\sqrt{d}),1]$. Clearly we have $\|\bv^{(1)}\|\le \sR$.

Then sequence $\bw^{(r+1)}$ can be expressed by {\small
\begin{align}
\notag
\bu^{(r+1)}+\bv^{(r+1)}& =\bw^{(r+1)}
\\\notag
&=\bw^{(r)}-\beta \left(q_{\pi}(\bw^{(r)}+\bx)-q_{\pi}(\bx)\right)
\\\notag
&=\bu^{(r)}+\bv^{(r)}-\beta \left(q_{\pi}(\bu^{(r)}+\bv^{(r)}+\bx)-q_{\pi}(\bx)\right)
\\\notag
&=\bu^{(r)}+\bv^{(r)}-\beta \left(q_{\pi}(\bu^{(r)}+\bv^{(r)}+\bx)-q_{\pi}(\bu^{(r)}+\bx)+q_{\pi}(\bu^{(r)}+\bx)-q_{\pi}(\bx)\right)
\\\notag
&\stackrel{(a)}=\bu^{(r)}-\beta\left(q_{\pi}(\bu^{(r)}+\bx)-q_{\pi}(\bx)\right)+\bv^{(r)}-\beta\left[\int^1_0\bP^{\T}\nabla^2 f(\bu^{(r)}+\bx+\theta\bv^{(r)})d\theta\right]\bv^{(r)}
\\\notag
&=\bu^{(r)}-\beta\left(q_{\pi}(\bu^{(r)}+\bx)-q_{\pi}(\bx)\right)+\bv^{(r)}-\beta(\mathcal{H}+\Delta^{(r)})\bv^{(r)}
\\
&=\bu^{(r)}-\beta\left(q_{\pi}(\bu^{(r)}+\bx)-q_{\pi}(\bx)\right)+(\bI-\beta\mathcal{H}-\beta\Delta^{(r)})\bv^{(r)}\nonumber\\
&=\bu^{(r+1)}+(\bI-\beta\mathcal{H}-\beta\Delta^{(r)})\bv^{(r)}\label{eq.dyv}
\end{align}}
where $(a)$ uses the Mean Value Theorem; $\Delta^{(r)}=\int^1_0\bP^{\T}\nabla^2 f(\bu^{(r)}+\bx+\theta\bv^{(r)})d\theta-\mathcal{H}$. Therefore, we have
\begin{equation}\label{eq:dynamic}
\bv^{(r+1)}=(\bI-\beta\mathcal{H}-\beta\Delta^{(r)})\bv^{(r)}.
\end{equation}

By applying \leref{le.l2lip} and $L_2$-Lipschitz continuity of the objective function, we have
\begin{align}\label{eq:delta}
{\|\Delta^{(r)}\|\le L_2(\|\bu^{(r)}\|+\|\bv^{(r)}\|+2\|{\bx}\|).}
\end{align}
Note that $\|\bw^{(1)}-\bx\|\le\|\bu^{(1)}-\bx\|+\|\bv^{(1)}\|\le2\sR$. This means that as a sequence generated by SP-GD,
$\{\bw^{(r)}\}$ satisfies the assumption given in \leref{le.layer31}. Also note that we have assumed that $\widehat{c}\ge 51$, then by the same lemma, it follows that
$$\|\bw^{(r)}-\bx\|\le 3\sS,\quad \forall r<T.$$
Similarly, we can apply \leref{le.layer31} again to obtain $\|\bu^{(r)}-\bx\|\le3\sS,\forall r<T$ since we have assumed $\|\bu^{(1)}-\bx\|\le\sR$. Combining these two results, we have
\begin{equation}
\|\bv^{(r)}\|=\|\bw^{(r)}-\bu^{(r)}\|\le\|\bw^{(r)}-\bx\|+\|\bu^{(r)}-\bx\|\le6\sS.\label{eq.sizeofv}
\end{equation}

Next let us prove that the following hold:
$$\|\bx\|\le\sR\le\sS$$
where the first inequality is because the assumption that $\bu^{(1)}=0$ and $\|\bu^{(1)} -\bx\|\le \sR$;
the second inequality is due to the following choices of the constants $\widehat{c}\ge 1$, $\kappa\ge1$ and $\log(d\kappa/\delta)\ge1$. Further, from \eqref{eq.normofu} and the assumption that $\bu^{(1)}=0$, we have $\|\bu^{(r)}\|\le2\sS$. Combining the above relations with \eqref{eq:delta}, we conclude
\begin{align}\label{eq:delta:norm}
\|\Delta^{(r)}\|\le 10L_2\sS\quad\textrm{and} \quad \beta\|\Delta^{(r)}\|\le 10\beta L_2\sS.
\end{align}
By \conref{cond:saddle} we know that $\mathbf{I}-\beta\mathcal{H}$ has maximum eigenvalue at least $1+\epsilon_H\beta$.
Let $\phi^{(r)}$ denote the norm of $\bv^{(r)}$ projected on the space spanned by $\vec{\be}$ , and let
$\psi^{(r)}$ denote the norm of $\bv^{(r)}$ projected onto the remaining space. From \eqref{eq:dynamic}, we have
\begin{subequations}
\begin{align}
\phi^{(r+1)}\ge&(1+\epsilon_H\beta)\phi^{(r)}-\mu\sqrt{(\phi^{(r)})^2+(\psi^{(r)})^2},\label{eq.rephi}
\\
\psi^{(r+1)}\le&(1+\epsilon_H\beta)\psi^{(r)}+\mu\sqrt{(\phi^{(r)})^2+(\psi^{(r)})^2},\label{eq.repsi}
\end{align}
\end{subequations}
where we have defined
\begin{equation}\label{eq.defofmu}
\mu=10\beta L_2\sS
\end{equation}
and the inequalities are true due to the use of triangular inequality and the bound in \eqref{eq:delta:norm}.

Then, we will use mathematical induction to prove
\begin{equation}\label{eq.induc}
\psi^{(r)}\le 4\mu r\phi^{(r)}, \; \forall~r<T.
\end{equation}
Intuitively, the above result says that,  the projection of $\bv^{(r)}$ in the negative curvature direction should be relatively large,  and this fact will finally lead to a fast descent in the objective.

Let us prove \eqref{eq.induc}. Clearly this equation is true when $r=1$ since by definition, we have
\begin{align}
\bv^{(1)} = \bw^{(1)} - \bu^{(1)} = \upsilon\sR\vec{\be},
\end{align}
which implies that $\|\psi^{(1)}\|=0$.

Next, let us assume that \eqref{eq.induc} is true at the $r$th iteration, we need to prove
\begin{equation}\label{eq.reduc}
\psi^{(r+1)}\le 4\mu(r+1)\phi^{(r+1)}, \; \forall~r<T-1.
\end{equation}
To show this result, we utilize \eqref{eq.rephi} and \eqref{eq.repsi} to lower and upper bound  $4\mu(r+1)\phi^{(r+1)}$ and $\psi^{(r+1)}$, respectively.
Substituting \eqref{eq.repsi} into LHS of \eqref{eq.reduc}, we have the upper bound of $\psi^{(r+1)}$, i.e.,
\begin{equation}
\psi^{(r+1)}\le\left(1+\epsilon_H\beta\right)4\mu r\phi^{(r)}+\mu\sqrt{(\phi^{(r)})^2+(\psi^{(r)})^2}.\label{eq.ubdofphi}
\end{equation}
Applying \eqref{eq.rephi} into RHS of \eqref{eq.reduc}, we have the lower bound of $4\mu(r+1)\phi^{(r+1)}$ as the following:
\begin{equation}
4\mu(r+1)\phi^{(r+1)}\ge4\mu(r+1)\left((1+\epsilon_H\beta)\phi^{(r)}-\mu\sqrt{(\phi^{(r)})^2+(\psi^{(r)})^2}\right).\label{eq.lbdofmu}
\end{equation}
Next, we will show that the following holds,
\begin{equation}
\left(1+4\mu(r+1)\right)\left(\sqrt{(\phi^{(r)})^2+(\psi^{(r)})^2}\right)\le4\phi^{(r)}.\label{eq.suffineq}
\end{equation}
If this is true, then after manipulation, we can show that the RHS of \eqref{eq.lbdofmu} is greater than the RHS of \eqref{eq.ubdofphi}, which will eventually imply \eqref{eq.reduc}.

In the following, we will show that the above relation \eqref{eq.suffineq} is true, i.e.,  RHS of \eqref{eq.rephi} is greater
than RHS of \eqref{eq.repsi}.
\paragraph{First step:}
We know that
\begin{equation}\label{eq.bdofmu}
4\mu(r+1)\le4\mu T\mathop{\le}\limits^{\eqref{eq.defofmu}}40\beta L_2\widehat{c}\sS\cdot \sT\mathop{\le}\limits^{(a)}\frac{40}{\widehat{c}}\mathop{\le}\limits^{(b)}1
\end{equation}
where the first inequality is true because $r<T-1$; in $(a)$ we use the relation $\beta L_2\sS\widehat{c}\cdot \sT=\frac{1}{\widehat{c}}$ by applying \eqref{eq.defs}\eqref{eq.deft}; $(b)$ is true when $\widehat{c}\ge 40$.

\paragraph{Second step:}
By using the induction assumption and the previous step, we have
\begin{equation}
4\phi^{(r)}\ge2\sqrt{2(\phi^{(r)})^2}\mathop{\ge}\limits^{\eqref{eq.induc},\eqref{eq.bdofmu}}(1+4\mu(r+1))\sqrt{(\phi^{(r)})^2+(\psi^{(r)})^2},
\end{equation}
which gives \eqref{eq.suffineq}. Therefore, we can conclude that $\psi^{(r+1)}\le 4\mu(r+1)\phi^{(r+1)}$ is true, which completes the induction.

\paragraph{Recursion of $\phi^{(r)}$:} Next we will show that the projection of $\bv^{r}$ on the negative curvature direction $\vec{\be}$ will be exponentially increasing.
$\\$
Using \eqref{eq.induc}, we have
\begin{equation}\label{eq.psilesph}
    \psi^{(r)}\mathop{\le}\limits^{\eqref{eq.induc}} 4\mu r\phi^{(r)}\mathop{\le}\limits^{\eqref{eq.bdofmu}} \phi^{(r)}.
\end{equation}
Then, we can  get the recursion of $\phi^{(r+1)}$ by the following steps.
\begin{align}
\notag
\phi^{(r+1)}\mathop{\ge}\limits^{\eqref{eq.rephi}}&(1+\epsilon_H\beta)\phi^{(r)}-\mu\sqrt{(\phi^{(r)})^2+(\psi^{(r)})^2}
\\\notag
\mathop{\ge}\limits^{(a)}&(1+\epsilon_H\beta)\phi^{(r)}-\mu\sqrt{2}\phi^{(r)}
\\\notag
\mathop{=}\limits^{\eqref{eq.defofmu}}&(1+\epsilon_H\beta)\phi^{(r)}-10\beta L_2\sS\sqrt{2}\phi^{(r)}
\\\notag
\mathop{=}\limits^{\eqref{eq.defs}}&(1+\epsilon_H\beta)\phi^{(r)}-\frac{10\sqrt{2}\epsilon_H\beta}{\widehat{c}^2\log(\frac{d\kappa}{\delta})}\phi^{(r)}
\\
\mathop{\ge}\limits^{(b)}&(1+\frac{\epsilon_H\beta}{2})\phi^{(r)}\label{eq.relation}
\end{align}
where $(a)$ is true because \eqref{eq.psilesph}; $(b)$ is true when $\widehat{c}\ge 2\sqrt{5\sqrt{2}}$.

\paragraph{Quantifying Escaping Time:} Next we estimate how many iterations does it require for $\bw^{(r)}$ to reduce the objective value sufficiently.
$\\$
From  \eqref{eq.sizeofv} and the definition of $\phi^{(r)}$, we have
\begin{align}
\notag
6\sS\ge&\|\bv^{(r)}\|\ge\phi^{(r)}
\\\notag
\mathop{\ge}\limits^{\eqref{eq.relation}}&(1+\frac{\beta\epsilon_H}{2})^{r}\phi^{(1)}
\\
\notag
\stackrel{(a)}=&(1+\frac{\beta\epsilon_H}{2})^{r}\|\bw^{(1)}-\bu^{(1)}\|
\\
\mathop{\ge}\limits^{(b)}&(1+\frac{\beta\epsilon_H}{2})^{r}\frac{\delta}{2\sqrt{d}}\frac{\sS}{\widehat{c}^2\kappa}\log^{-1}(\frac{d\kappa}{\delta})\quad\forall r<T
\label{eq.upoft}
\end{align}
where in $(a)$ we used \eqref{eq.inicond}; in $(b)$ we use condition $\upsilon\in[\delta/(2\sqrt{d}),1]$.

Since \eqref{eq.upoft} is true $\forall r<T$, then it must hold for $r=T-1$. Taking log on both sides of \eqref{eq.upoft}, letting $r=T-1$,  we can have
\begin{align}
\notag
T\le&\frac{\log(12\widehat{c}^2(\frac{\kappa\sqrt{d}}{\delta})\log(\frac{d\kappa}{\delta}))}{\log(1+\frac{\beta\epsilon_H}{2})}+1
\mathop{<}\limits^{(a)}\frac{4\log(12\widehat{c}^2(\frac{\sqrt{d}\kappa}{\delta})\log(\frac{d\kappa}{\delta}))}{\beta\epsilon_H}+1
\\
\mathop{<}\limits^{(b)}&\frac{4\log(12\widehat{c}^2(\frac{d\kappa }{\delta})^2)}{\beta\epsilon_H}+1
\mathop{\le}\limits^{(c),\eqref{eq.deft}}4(2+\log(12\widehat{c}^2))\sT+1\mathop{\le}\limits^{(d),\eqref{eq.deft}}4(2\frac{1}{4}+\log(12\widehat{c}^2))\sT\label{eq.bdt}
\end{align}
where $(a)$ comes from inequality $\log(1+x)>x/2$ when $x<1$,  in $(b)$ we used relation $\log(x)<x, x>0$, and $(c)$ is true because $\delta\in(0,\frac{d\kappa}{e}]$ and $\log(d\kappa/\delta)>1$ so that $\log(12\widehat{c}^2)+2\log(\frac{d\kappa}{\delta})\le(\log(12\widehat{c}^2)+2)\log(\frac{d\kappa}{\delta})$; $(d)$ is true due to the fact that $\beta L_1\le 1$, $\kappa\ge1$, and $\log(d\kappa/\delta)\ge1$ so we have $\sT\ge1$ .

From \eqref{eq.bdt}, we know that when
\begin{equation}
4\left(2\frac{1}{4}+\log(12\widehat{c}^2)\right)<\widehat{c},\label{eq.chat}
\end{equation}
we will have $T<\widehat{c}\sT$.

It can be observed that LHS of \eqref{eq.chat} is a logarithmic with respect to $\widehat{c}$ and RHS of \eqref{eq.chat} is a linear function in terms of $\widehat{c}$, implying that when $\widehat{c}$ is large enough inequality \eqref{eq.chat} holds. It is can be numerically checked that when $\widehat{c}\ge 51$ inequality \eqref{eq.chat} holds. The proof is complete.

\end{proof}

\subsection{Proof of \thref{th:spgd1}}
The proof of \thref{th:spgd1} is similar as the one of proving convergence of PGD shown in \cite[Lemma 14,15]{jin2017jordan} and NEON in \cite[Theorem 2]{xu2017first}. Considering the completeness of the whole proof in this paper, here we give the following proof of this lemma in details.

\begin{proof}
Let $\bz^{(1)}$ be a vector that follows uniform distribution within the ball $\mathbb{B}^{(d')}_{\bx}(\sR)$, where $\mathbb{B}^{(d')}_{\bx}$ denotes the $d'$-dimensional ball centered at $\bx$ with radius $\sR$ and  $d'=|\mathcal{F}(\bx)|$.

{\bf Step 1:} we will quantify the decrease of the objective value after $T$ number of iterations. Suppose that \asref{as1} is satisfied. Let $\bx$ denote a saddle point which satisfies \conref{cond:saddle}. Consider two sequences generated by SP-GD, i.e., $\{\bu^{(r)}\}$ and  $\{\bw^{(r)}\}$, where the initial points of these two sequences satisfy the conditions \eqref{eq.inicond} as shown in \leref{le.layer32}.

Again, without loss of generality, we assume $\bu^{(1)}=0$ and let $T^*\bydef\widehat{c}\sT$ and $T'\bydef\inf_{r\ge 1}\left\{r|{\hf}(\bu^{(r)})-\hf(\bu^{(1)})\le-2\sF\right\}$. Then, we have the following two cases to analyze the decrease of the objective value.
\begin{enumerate}
\item Case $T'\le T^*$: Applying \leref{le.layer31}, we know that
\begin{align}
\notag
&f(\bx+\bu^{(T')})-f(\bx)-\nabla f(\bx)^{\T}\bu^{(T')}
\le f(\bx+\bu^{(1)})-f(\bx)-\nabla f(\bx)^{\T}\bu^{(1)}-2\sF
\\
\mathop{\le}\limits^{(a)} &\frac{L_1}{2}\|\bu^{(1)}\|^2-2\sF\mathop{\le}\limits^{(b)}-2\sF\label{eq.ftf}
\end{align}
where $(a)$ is true because of the $L_1$-gradient Lipschitz continuity; $(b)$ is true because $\bu^{(1)}=0$.

From \eqref{eq.desbeta}, we know that SP-GD is always decreasing the approximate objective function $\hf$. When $\widehat{c}\ge 1$ for any $T>\widehat{c}\sT=T^*\ge T'$, we have
\begin{align}
\notag
&f(\bx+\bu^{(T)})-f(\bx)-\nabla f(\bx)^{\T}\bu^{(T)}
\le f(\bx+\bu^{(T^*)})-f(\bx)-\nabla f(\bx)^{\T}\bu^{(T^*)}
\\
\le& f(\bx+\bu^{(T')})-f(\bx)-\nabla f(\bx)^{\T}\bu^{(T')}\le -2\sF.\label{eq.ftf1}
\end{align}
Also, since $\bu^{(T')}=\bu^{(T'-1)}-\beta(\nabla f(\bx+\bu^{(T'-1)})-\nabla f(\bx))$, we have $\|\bu^{(T')}\|\le\|\bu^{(T'-1)}\|+\beta L_1\|\bu^{(T'-1)}\|\le4\sS$ by $L_1$-gradient Lipschitz continuity and  applying \eqref{eq.normofu}.

\item Case $T'> T^*$: Applying \leref{le.layer31}, we know that $\|\bu^{(r)}-\bu^{(1)}\|\le 3\sS$ for $r< T^*$.
Define $T''=\inf_{r\ge 1}\left\{r|{f}(\bw^{(r)})-f(\bw^{(1)})\le -2\mathcal{F}\right\}$. Then, after applying \leref{le.layer32}, we know $T''< T^*$. Using the same argument as the above case, for $T\ge \widehat{c}\sT= T^*> T''$, we also have
\begin{align}
\notag
&f(\bx+\bw^{(T)})-f(\bx)-\nabla f(\bx)^{\T}\bw^{(T)}
\le  f(\bw^{(T^*)})-f(\bx)-\nabla f(\bx)^{\T}\bw^{(T^*)}
\\\notag
\le & f(\bw^{(T'')})-f(\bx)-\nabla f(\bx)^{\T}\bw^{(T'')})
\le  f(\bw^{(1)})-f(\bx)-\nabla f(\bx)^{\T}\bw^{(1)}-2\sF
\\
\mathop{\le}\limits^{(a)} & \frac{L_1}{2}\|\bw^{(1)}\|^2-2\sF \mathop{\le}\limits^{(a)}-1.5\sF\label{eq.ftf2}
\end{align}
where in $(a)$ we use the initialization conditions of the iterates shown in \eqref{eq.inicond} in \leref{le.layer32}  so that we have $L_1\sR^2/2\mathop{=}\limits^{\eqref{eq.defoft}}L_1\epsilon^2_H/(2\widehat{c}^8\kappa^2L^2_2\log^4(d\kappa/\delta))\mathop{\le}\limits^{\eqref{eq.defs}}\epsilon^3_H/(2\widehat{c}^8\kappa L^2_2\log^3(d\kappa/\delta))\le\epsilon^3_H/(2L^2_2\widehat{c}^5\log^3(d\kappa/\delta))\mathop{=}\limits^{\eqref{eq.defff}} 0.5\sF$. Also, similar as the previous case, we have $\|\bw^{(T'')}\|\le 4\sS$  since $\|\bw^{(1)}-\bx\|\le2\sR$.

\end{enumerate}

{\bf Step 2:} we show that at least one sequence, i.e., either $\bu^{(r)}$ or $\bw^{(r)}$, will give the sufficient descent of the approximate objetive value after $T$ iterations. Combining \eqref{eq.ftf1} and \eqref{eq.ftf2}, we have
\begin{equation}
\min\left\{f(\bx+\bu^{(T)})-f(\bx)-\nabla f(\bx)^{\T}\bu^{(T)},f(\bx+\bw^{(T)})-f(\bx)-\nabla f(\bx)^{\T}\bw^{(T)}\right\}\le -1.5\sF,
\;\forall T\ge \widehat{c}\sT,\label{eq.suffdes}
\end{equation}
meaning that at least one of the sequences can give a sufficient decrease of the objective function if the initial points of the two sequences are separated apart with each other far enough along the negative curvature direction $\vec{\be}$.

Let $\mathcal{X}_{\textsf{stuck}}$ denote the set where a generic sequence $\bu^{(r)}$ is initialized such that the sequence cannot escape from the strict saddle point after $T$ iterations, i.e., $f(\bx+\bu^{(T)})-f(\bx)-\nabla f(\bx)^{\T}\bu^{(T)}>-1.5\sF$. According to \eqref{eq.suffdes} and \leref{le.layer32}, we can conclude that if $\bu^{(1)}\in\mathcal{X}_{\textsf{stuck}}$, then initialization  $(\bu^{(1)}\pm\upsilon \sR\vec{\be})\notin\mathcal{X}_{\textsf{stuck}}$ where $\upsilon\in[\frac{\delta}{2\sqrt{d}},1]$.

{\bf Step 3:} next, we give the upper
bound of the volume of $\mathcal{X}_{\textsf{stuck}}$,
\begin{align}
\notag
\textsf{Vol}(\mathcal{X}_{\textsf{stuck}})=&\int_{\mathbb{B}^{(d')}_{\bx}}d\bu I_{\mathcal{X}_{\textsf{stuck}}}(\bu)=\int_{\mathbb{B}^{(d'-1)}_{\bx}}d\bu_{-1}\int^{\bx_1+\sqrt{\sR^2-\|\bx_{-1}-\bu_{-1}\|^2}}_{\bx_1-\sqrt{\sR^2-\|\bx_{-1}-\bu_{-1}\|^2}}d\bu_1 I_{\mathcal{X}_{\textsf{stuck}}}(\bu)
\\\notag
\le&\int_{\mathbb{B}^{(d'-1)}_{\bx}}d\bu_{-1}\left(2\frac{\delta}{2\sqrt{d'}\sR}\right)=\textsf{Vol}\left(\mathbb{B}^{(d'-1)}_{\bx}(\sR)\right)\frac{\sR\delta}{\sqrt{d'}}
\end{align}
where $I_{\mathcal{X}_{\textsf{stuck}}}(\bu)$ is an indicator function showing that $\bu$ belongs to set $\mathcal{X}_{\textsf{stuck}}$, and $\bu_1$ represents the component of vector $\bu$ along $\vec{\be}$ direction, and $\bu_{-1}$ is the remaining $d'-1$ dimensional vector.

Then, the ratio of $\textsf{Vol}(\mathcal{X}_{\textsf{stuck}})$ over the whole volume of the initialization/perturbation ball can be upper bounded by
\begin{equation}\notag
\frac{\textsf{Vol}(\mathcal{X}_{\textsf{stuck}})}{\textsf{Vol}(\mathbb{B}^{(d')}_{\bx}(\sR))}\le\frac{\frac{\sR\delta}{\sqrt{d'}}
\textsf{Vol}(\mathbb{B}^{(d'-1)}_{\bx}(\sR))}{\textsf{Vol}(\mathbb{B}^{(d')}_{\bx}(\sR))}
=\frac{\delta}{\sqrt{d\pi}}\frac{\Gamma(\frac{d'}{2}+1)}{\Gamma(\frac{d'}{2}+1)}\le\frac{\delta}{\sqrt{d'\pi}}\sqrt{\frac{d'}{2}+\frac{1}{2}}\le\delta
\end{equation}
where $\Gamma(\cdot)$ denotes the Gamma function, and inequality is true due to the fact that $\Gamma(x+1)/\Gamma(x+1/2)<\sqrt{x+1/2}$ when $x\ge0$.

{\bf Step 4:} finally, we show that the  output of SP-GD can give an approximate eigenvector whose smallest eigenvalue is less than $-\epsilon_H$ with high probability. Combining \eqref{eq.suffdes} and the results of the last step, we can show that
\begin{equation}\label{eq.sufde}
f(\bx+\bz^{(T)})-f(\bx)-\nabla f(\bx)^{\T}\bz^{(T)}\le-1.5\sF
\end{equation}
with at least probability $1-\delta$. By the $L_2$-Lipschitz continuity, we have
\begin{equation}\label{eq.lipsc}
\left|f(\bx+\bz^{(T)})-f(\bx)-\nabla f(\bx)^{\T}\bz^{(T)}-\frac{1}{2}(\bu^{(T)})^{\T}\nabla^2f(\bx)\bu^{(T)}\right|\le\frac{L_2}{6}\|\bz^{(T)}\|^3.
\end{equation}
and $\|\bz^{(T)}\|\le4\sS$.
Applying \eqref{eq.sufde} into \eqref{eq.lipsc}, we have
\begin{align}
\notag
\frac{1}{2}(\bu^{(T)})^{\T}\nabla^2f(\bx)\bu^{(T)}\le &f(\bx+\bz^{(T)})-f(\bx)-\nabla f(\bx)^{\T}\bz^{(T)}+\frac{L_2}{6}\|\bz^{(T)}\|^3
\\
\mathop{\le}\limits^{(a)} & -1.5\sF+0.5\sF\le -\sF
\end{align}
where in $(a)$ we use \eqref{eq.defff}\eqref{eq.defs} so that we have $\widehat{c}L_2\sS^3=\sF$ where $\widehat{c}\ge51$.
Therefore, we have
\begin{equation}\label{eq.negav}
\frac{(\bu^{(T)})^{\T}\nabla^2f(\bx)\bu^{(T)}}{\|\bz^{(T)}\|^2}\le\frac{-2\sF}{(4\sS)^2}\mathop{\le}\limits^{\eqref{eq.defff},\eqref{eq.defs}}-\frac{\epsilon_H}{8\widehat{c}\log(d\kappa/\delta)}
\end{equation}
so that we can claim that if SP-GD returns $\Diamond$ then with probability $1-\delta$ the output $\bz^{(T)}$ holds for \eqref{eq.negav}, otherwise SP-GD returns $\emptyset$ which indicates that $\lambda_{\min}(\bH_{\bP}(\bx))\ge -\epsilon_H$ with probability $1-\delta$.
\end{proof}

\subsection{Proof of \coref{co:th2}}\label{sec:th2}
\begin{proof}
In this section, we give the proof of finding $(\epsilon,\sqrt{\epsilon})$-SOSP1 by SOSP$^+$. The main difference between SNAP and SNAP$^+$ is that we replace the oracle $\textsf{\it Negative-Eigen-Pair}$ by SP-GD. Other steps are the same as the proof of \thref{th.1}. Here, we only focus on the difference of the objetive reduction between SNAP and SNAP$^+$. First, let the number of iterations run by SP-GD for extracting the negative curvature once be \begin{equation}
    T_{\textsf{SP-GD}}=\frac{\widehat{c}\log(\frac{dL_1}{\epsilon_H\delta})}{\beta\epsilon_H}+1\sim\mathcal{O}\left(\frac{1}{\epsilon_H}\right).
\end{equation}
In the following, we show the objective reduction in the NCD step, where the number of iterations required in the inner loop is taken into account.\\
\noindent{\bf Case 1) ($\textsf{flag}_{\alpha}=\emptyset$)}: The algorithm implements $\bx^{(r+1)}=\bx^{(r)}+\alpha^{(r)}\bd^{(r)}$ without using $\alpha^{(r)}_{\max}$ computed by \eqref{eq.alphamax}.
By \eqref{eq.sufd:3}, we have the descent of the objective value by
$$f(\bx^{(r+r_{\textsf{th}})})\le f(\bx^{(r)})-\frac{\Delta}{T_{\textsf{SP-GD}}}.$$\\
\noindent{\bf Case 2) ($\textsf{flag}_{\alpha}=\Diamond$)}: $\alpha^{(r)}_{\max}$ is computed by \eqref{eq.alphamax} to update $\bx^{(r+1)}$;
By \eqref{eq.descdm2}, we have
\begin{align}\notag
f(\bx^{(r)}) - f(\bx^{(r-\min\{d,m\})}) <  - \min \left\{\epsilon^2_G/(18L_1), 0.06\epsilon'^3_H(\delta)/(L^2_2T_{\textsf{SP-GD}})\right\},\forall r>\min\{d,m\}.
\end{align}
Applying the same argument from \eqref{eq.tele1} to \eqref{eq.ruppbd}, we know the upper bound of the number of iterations by $ \frac{(f(\bx^{(1)})-f^{\star})T_{\textsf{SP-GD}}}{\Delta'}$. From \thref{th:spgd1}, we know that $ \epsilon'_H(\delta)=\frac{\epsilon_H}{8\widehat{c}\log(d\kappa/\delta)}$, i.e., $\gamma=8\widehat{c}\log(d\kappa/\delta)$. Applying $\epsilon_G=\epsilon$, $\epsilon_H=\sqrt{L_2\epsilon}$ and $\beta\le1/L_1$ (note that $r_{\textsf{th}}$ could be either a constant or chosen in the order of $\mathcal{O}(L_1/\sqrt{L_2\epsilon})$), we can obtain the convergence rate of SNAP$^+$ by
\begin{equation}
{\widehat{\mathcal{O}}}\left(\frac{\min\{d,m\}(f(\bx^{(1)})-f^{\star})}{\min\left\{\frac{\epsilon^{2.5}L_2^{1/2}}{L^2_1},\frac{\epsilon^2}{L_1}\right\}}\right),
\end{equation}
which completes the proof.
\end{proof}
\remark SP-GD in SNAP$^+$ is not needed for every step. Also, the accelerated version of SP-GD and PGD (e.g., by incorporating Nesterov acceleration technique) can be used such that we can have a faster convergence rate of SNAP$^+$.


\end{document}